\bfseries\color{brown},
\newcommand\blam{{\boldsymbol\lambda}}
\newcommand\bmu{{\boldsymbol\mu}}
\def\({\big(}
\def\){\big)}
\def\Z{\mathbb{Z}}
\def\N{\mathbb{N}}
\def\lam{\lambda}
\def\Lam{\Lambda}
\def\Sym{\mathfrak{S}}
\def\eps{\varepsilon}
\newcommand\HH{\mathscr{H}}
\def\bn[#1,#2]{\begin{bmatrix}#1\\#2\end{bmatrix}}
\def\P{\mathscr{P}}
\def\we{\widetilde{e}}
\def\wf{\widetilde{f}}
\def\wi{\widehat{i}}
\def\wj{\widehat{j}}
\def\cp{\mathcal{P}}
\newcommand\whd{\widehat{\Delta}}
\newcommand\ua{\underline{a}}
\newcommand\ub{\underline{b}}
\DeclareMathOperator\lmod{\!-mod}
\DeclareMathOperator\Hom{Hom}
\DeclareMathOperator\soc{soc}
\DeclareMathOperator\head{hd}
\DeclareMathOperator\cha{char}
\DeclareMathOperator\pr{pr}
\DeclareMathOperator\Rep{Rep}
\DeclareMathOperator\wt{wt}
\DeclareMathOperator\res{res}
\DeclareMathOperator\ind{ind}
\DeclareMathOperator\charac{ch}
\DeclareMathOperator\infl{infl}
\title[Crystal of affine type $\widehat{A}_{\ell-1}$ and Hecke algebras]
{Crystal of affine type $\widehat{A}_{\ell-1}$ and Hecke algebras at a primitive $2\ell$th root of unity}
\subjclass[2010]{20C08, 16G99, 06B15}
\keywords{Affine Hecke algebras, Iwahori-Hecke algebras, crystal structure, Affine $\widehat{\mathfrak{sl}}_{\ell}$}
\author{Huang Lin}
  \address{School of Mathematical Sciences\\
  Zhejiang University\\
  Hangzhou, 310027, P.R. China}
  \email{3130100651@zju.edu.cn}
\author[Corresponding author]{Jun Hu\textsuperscript{\Letter}}\thanks{\Letter Jun Hu \qquad Email: junhu404@bit.edu.cn}\address{Key Laboratory of Mathematical Theory and Computation in Information Security, School of Mathematics and Statistics\\
  Beijing Institute of Technology\\
  Beijing, 100081, P.R. China}
\email{junhu404@bit.edu.cn}
\numberwithin{equation}{section}
\newtheorem{prop}[equation]{Proposition}
\newtheorem{thm}[equation]{Theorem}
\newtheorem{cor}[equation]{Corollary}
\newtheorem{lem}[equation]{Lemma}
\newtheorem{exmp}[equation]{Example}
\theoremstyle{definition}
\newtheorem{dfn}[equation]{Definition}
\theoremstyle{remark}
\begin{document}

\begin{abstract}
Let $\ell\in\N$ with $\ell>1$. In this paper we give a new realization of the crystal of affine type $\widehat{A}_{\ell-1}$ using the modular representation theory of the affine Hecke algebras $\HH_n$ of type $A$ and their level two cyclotomic quotients with Hecke parameter being a primitive $2\ell$th root of unity. We construct ``hat'' versions of $i$-induction and $i$-restriction functors on the category $\Rep_I(\HH_n)$ of finite dimensional integral modules over $\HH_n$, which induce Kashiwara operators on a suitable subgroup of the Grothendieck groups of $\Rep_I(\HH_n)$. For any simple module $M\in\Rep_I(\HH_n)$, we prove that the simple submodules of $\res_{\HH_{n-2}}^{\HH_n}M$ which belong to $\widehat{B}(\infty)$ (Definition \ref{infty}) occur with multiplicity two. The main results generalize the earlier work of Grojnowski and Vazirani on the relations between the crystal of $\widehat{\mathfrak{sl}}_{\ell}$ and the affine Hecke algebras of type $A$ at a primitive $\ell$th root of unity.
\end{abstract}

\maketitle
\setcounter{tocdepth}{1}
\tableofcontents

\section{Introduction}

Let $1<\ell\in\N$. Let $B(\hat{\Lam}_0)$ be the crystal of the integral highest weight module $L(\hat{\Lam}_0)$ of the affine Lie algebra $\widehat{\mathfrak{sl}}_{\ell}$ and ${B}({\Lam}_0+{\Lam}_\ell)$ the crystal of the integral highest weight module $L(\Lam_0+\Lam_\ell)$ of the affine Lie algebra $\widehat{\mathfrak{sl}}_{2\ell}$, where $\hat{\Lam}_0$ and $\Lam_0,\Lam_\ell$ are the fundamental dominant weights of $\widehat{\mathfrak{sl}}_{\ell}$ and $\widehat{\mathfrak{sl}}_{2\ell}$ respectively. By \cite[(4.2)]{NS}, there is a natural embedding $\iota: B(\hat{\Lam}_0)\cup\{0\}\hookrightarrow{B}({\Lam}_0+{\Lam}_\ell)\cup\{0\}$ which is defined by \begin{equation}\label{iota}
\iota(\widetilde{f}_{i_n+\ell\Z}\cdots \widetilde{f}_{i_1+\ell\Z}1_{\hat{\Lam}_0})=\widetilde{f}_{i_n+2\ell\Z}\widetilde{f}_{i_n+\ell+2\ell\Z}\cdots \widetilde{f}_{i_1+2\ell\Z}\widetilde{f}_{i_1+\ell+2\ell\Z}1_{{\Lam}_0+{\Lam}_\ell},\,\,\forall\,n\in\N, i_1,\cdots,i_n\in\Z ,
\end{equation}
such that $\iota(B(\hat{\Lam}_0))\subseteq {B}({\Lam}_0+{\Lam}_\ell)$.

The above embedding $\iota$ has some important combinatorial and representation theorietic implication. Recall that $B(\hat{\Lam}_0)$ has a realization in terms of the set $\mathcal{K}_{0}$ of $\ell$-restricted partitions (or equivalently, Kleshchev partitions), while ${B}({\Lam}_0+{\Lam}_\ell)$ has a realization in terms of the set ${\mathcal{K}}_{0,\ell}$ of Kleshchev bipartitions with respect to $(\sqrt[2\ell]{1};1,-1)$ (\cite[Definition 2.3]{Ariki:sim}, \cite[Page 605, Definition]{AM}), where $\sqrt[2\ell]{1}$ denotes a primitive $2\ell$th root of unity. Thus for each $n\in\N$, $\iota$ defines an injection (\cite[Corollary 6.9]{Hu5}) from the subset $\mathcal{K}_{0}(n)$ into the subset ${\mathcal{K}}_{0,\ell}(2n)$, such that if $\emptyset\overset{i_1}{\rightarrow}\cdot\overset{i_2}{\rightarrow}\cdot\cdots\overset{i_n}{\rightarrow}\lam$ is a path in Kleshchev's good lattice of $\mathcal{K}_{0}$ then  $(\emptyset,\emptyset)\overset{i_1}{\rightarrow}\cdot\overset{i_1+\ell}{\rightarrow}\cdot\overset{i_2}{\rightarrow}\cdot\overset{i_2+\ell}{\rightarrow}\cdot\cdots
\overset{i_n}{\rightarrow}\cdot\overset{i_n+\ell}{\rightarrow}\iota(\lam)$ is a path in Kleshchev's good lattice of ${\mathcal{K}}_{0,\ell}$, where $\mathcal{K}_{0}(n)$ (resp., ${\mathcal{K}}_{0,\ell}(2n)$) denotes the set of $\ell$-restricted partitions of $n$ in $\mathcal{K}_0$ (resp., of Kleshchev bipartitions of $2n$ in $\mathcal{K}_{0,\ell}$). Furthermore, $\iota(B(\hat{\Lam}_0))$ coincides with the fixed point subset of  ${B}({\Lam}_0+{\Lam}_\ell)$ under the automorphism ``${\rm{h}}$'' induced by the Dynkin diagram automorphism $i\mapsto i+\ell+2\ell\Z$ for all $i\in\Z/2\ell\Z$. The subset $\mathcal{K}_0(n)$ gives a labelling of simple modules for the Iwahori--Hecke algebra
of type $A_{n-1}$ (i.e., associated to the symmetric group $\Sym_n$) at a primitive $\ell$th root of unity $\sqrt[\ell]{1}$, while the subset ${\mathcal{K}}_{0,\ell}(2n)$ gives a labelling of simple modules for the Iwahori--Hecke algebra of type $B_{2n}$ at a primitive $2\ell$th root of unity $\sqrt[2\ell]{1}$. This gives a first clue on the connection between the modular representations of the Iwahori--Hecke algebras of type $A$ and of type $B$ at different roots of unity via (\ref{iota}).

The second implication of (\ref{iota}) involves the Iwahori--Hecke algebras of type $D$ at root of unity. Let $F$ be an algebraically closed field with $\cha F\neq 2$ and $1\neq q\in F^\times$. Recall that the Iwahori--Hecke algebra $\HH_q(B_n)$ of type $B_n$ is the unital associative $F$-algebra generated by $T_0,T_1,\cdots,T_{n-1}$ which satisfy the following relations: $$\begin{aligned}
& T_0^2=1,\,\,(T_r-q)(T_r+1)=0,\,\,\forall\,1\leq r<n,\\
& T_i T_{i+1} T_i=T_{i+1}T_iT_{i+1},\,\,\forall\,1\leq i<n-1,\\
& T_iT_j=T_jT_i,\,\,\forall\, 1\leq i<j-1<n-1,\\
& T_0T_1T_0T_1=T_1T_0T_1T_0 .
\end{aligned}
$$
The $F$-subalgebra of $\HH_q(B_n)$ generated by $T_0T_1T_0, T_1,\cdots,T_{n-1}$ is isomorphic to the Iwahori--Hecke algebra $\HH_q(D_n)$ associated to the Weyl group of type $D_n$. In a series of earlier works \cite{Hu1}, \cite{Hu2}, \cite{Hu3}, \cite{Hu4}, the second author has initiated the study of the modular representations of $\HH_q(D_n)$ using the Clifford theory between $\HH_q(B_n)$ and $\HH_q(D_n)$, with the aim of computing the decomposition numbers of $\HH_q(D_n)$ in terms of the
decomposition numbers of $\HH_q(B_n)$. Let $\P(n)$ and $\cp(n)$ be the set of bipartitions and partitions of $n$ respectively.
Let $\{S^{\blam}|\blam\in\P(n)\}$ and $\{D^{\blam}\neq 0|\blam\in\P(n)\}$ be the set of Specht modules and simple modules of $\HH_q(B_n)$ respectively, where
$D^\blam$ is defined in \cite{DJM1} as certain quotient of $S^\blam$. In the semisimple case, $S^{\blam}\downarrow_{\HH_q(D_n)}$ splits into a direct sum of two simple submodules $S_+^\blam\oplus S_-^\blam$ whenever $\blam=(\lam,\lam)$ for some $\lam\in \cp(n/2)$.
Set $\mathscr{K}(n):=\{\blam\in\P(n)|D^\blam\neq 0\}$.
By \cite{Hu1}, for each $\blam\in{\mathscr{K}}(n)$, $D^{\blam}\downarrow_{\HH_q(D_n)}$ either remains simple, or splits into a direct sum of two simple submodules $D_+^\blam\oplus D_{-}^\blam$. The most interesting and not well-understood case is when $n$ is even and the Hecke parameter $q$ is a primitive $2\ell$-th root of unity. In that case, $\mathscr{K}(n)=\mathcal{K}_{0,\ell}(n)$, $D^{\blam}\downarrow_{\HH_q(D_n)}$ splits if and only if $\blam={\rm h}(\blam)$, and if and only if $\blam=\iota(\mu)$ for some $\mu\in\mathcal{K}_0(n/2)$.
Moreover, the set $$
\bigl\{D^{\blam}\downarrow_{\HH_q(D_n)}\bigm|\blam\in{\mathcal{K}}_{0,\ell}(n)/\!\!\sim, {\rm h}(\blam)\neq\blam\bigr\}\sqcup \bigl\{D_+^{\iota(\mu)},D_-^{\iota(\mu)}\bigm|\mu\in\mathcal{K}_0(n/2)\bigr\}
$$
is a complete set of pairwise non-isomorphic simple $\HH_q(D_n)$-modules, where $\blam\sim\bmu$ if and only if $\bmu={\rm h}(\blam)$.

A major challenging problem in the understanding of the modular representations of $\HH_q(D_n)$ when $n$ is even and $q$ is a primitive $2\ell$th root of unity is to understand the decomposition numbers $[S^{(\lam,\lam)}_+:D_+^{\iota(\mu)}]$, where $\lam$ is a partition of $n/2$ and $\mu$ is an $\ell$-restricted partition of $n/2$. We suspect that (\ref{iota}) reveals not only the bijection between $\mathcal{K}_0(n/2)$ and the set $\{\blam\in{\mathcal{K}}_{0,\ell}(n)|\text{$D^{\blam}\downarrow_{\HH_q(D_n)}$ splits}\}$ (a fact which was first obtained in \cite{Gec}), but also indicates some possible connections between the following three (type $A$, type $B$ and type $D$) decomposition numbers $$
[S^{\lam}:D^{\mu}],\quad [S^{(\lam,\lam)}:D^{\iota(\mu)}],\quad [S^{(\lam,\lam)}_+:D_+^{\iota(\mu)}] ,$$
where $S^\lam, D^\mu$ denote the Specht module labelled by $\lam$ and the simple module labelled by $\mu$ of $\HH_{\sqrt[\ell]{1}}(\Sym_{n/2})$ respectively.

By the celebrated work of Ariki, Lascoux, Leclerc and Thibon (\cite{Ariki:can}, \cite{LLT}), the decomposition numbers for the Hecke algebras of type $A$, type $B$ or more generally of type $G(r,1,n)$ when $\cha F=0$ can be computed through the calculation of the canonical bases of certain integral highest weight modules over $\widehat{\mathfrak{sl}}_{\ell}$, where $\ell$ is the multiplicative order of the Hecke parameter $q$. More recently, using the theory of quiver Hecke algebras, Brundan and Kleshchev \cite{BK:GradedKL} show that Ariki-Lascoux-Leclerc-Thibon's theory can be upgraded into a $\Z$-graded setting. In \cite{G} and \cite{GV}, Grojnowski and Vazirani give a new approach to the modular representations of affine Hecke algebras and their cyclotomic quotients over field of any (possibly positive) characteristic. In their approach a new realization of the crystals of affine type $\widehat{A}_{\ell-1}$ is obtained using the modular representation theory of affine Hecke algebras and their cyclotomic quotients at a primitive $\ell$th root of unity, where the Kashiwara operators $\we_i, \wf_i$ for the crystal are realized as the functors of taking socle of $i$-restriction and of taking head of $i$-induction. Grojnowski and Vazirani's approach also successfully applied to other Hecke algebras such as Hecke-Cifford algebras (\cite{BK1}, \cite{Tsu}) and KLR algebras (\cite{LV}). Note that the Dynkin diagram $A_{2\ell}^{(2)}$ (resp., $D_{\ell+1}^{(2)}$) can be obtained by $A_{2\ell}^{(1)}$ (resp., $A_{2\ell-1}^{(1)}$) by some diagram automorphisms (\cite{Hu06}).
In this paper, motivated by the embedding (\ref{iota}) (which can be viewed as some sort of diagram folding), we give a new realization of the crystal of affine type $\widehat{A}_{\ell-1}$ using the modular representation theory of the affine Hecke algebras of type $A$ and their level two cyclotomic quotients (i.e., Iwahori--Hecke algebra of type $B$) at a primitive $2\ell$th root of unity.
We realize the Kashiwara operators for the crystal as the functors of taking socle of certain two-steps restriction and of taking head of certain two-steps induction. For any simple module $M\in\HH_n\lmod$, we prove that the simple submodules of $\res_{\HH_{n-2}}^{\HH_n}M$ which belong to $\widehat{B}(\infty)$ (Definition \ref{infty}) occur with multiplicity two. The main tool we use is the theory of real simple modules developed by Kang, Kashiwara, et al. in recent papers (\cite{KKK}, \cite{KKKO}). The theory is originally built for the quiver Hecke algebras, but can be transformed into the setting of the affine Hecke algebras of type $A$ using the categorical equivalence between finite dimensional representations of affine Hecke algebras of type $A$ and of the quiver Hecke algebras of type $A$ established in \cite{HL}.

\medskip

The content of the paper is organized as follows. In Section 2, we first recall some basic knowledge (like intertwining elements, convolution product, the functor $\Delta_{\underline{b}}$ etc.) about the non-degenerate affine Hecke algebra of type $A$. Then we introduce in Definitions \ref{2Steps} and \ref{GKato} the notion of two-steps induction functors $e_{\wi}$, the generalized Kato modules $L(\wi^m)$ and the function $\eps_{\wi}$. We give in Lemmas \ref{irre1}, \ref{DeltaCharacter}, \ref{epswi01} and Corollary \ref{epsHat2} a number of results on the functor $\whd_{\wi^m}$ and the function $\eps_{\wi}$. In Section 3 we first recall the notion of real simple module and some of their main properties established in \cite{KKKO}. Then we give Lemmas \ref{real1}, \ref{ijcommutes}, Corollary \ref{realCor} which provide a number of new real simple modules and their nice properties. In particular, our Corollary \ref{headCor} fixes a gap of \cite[6.3.2]{Klesh:book} in the case when $p>2$, see \cite{Klesh:book2}. In Section 4 we study the two-steps version $\we_{\wi}, \wf_{\wi}$ of the functors $\we_i,\wf_i$ of taking socle of $i$-restriction and taking head of $i$-induction. The main result in this section is Proposition \ref{RedBao}, where we prove that the socle of $e_ie_{i+\ell}M$ is isomorphic to $\we_i\we_{i+\ell}M$ for any finite dimensional simple $\HH_n$-module $M$. In addition, we also give generalization in our ``hat" setting of some results in \cite{Klesh:book} for the functors $\we_i, \wf_i, \Delta_{i^a}$ and the function $\eps_i$. In Section 5 we give the first two main results Theorems \ref{mainthm1a}, \ref{mainthm1b} of this paper, which give a new realization of the crystal of $U_v(\widehat{\mathfrak{sl}}_{\ell})^{-}$ using the modular representation theory of the affine Hecke algebras $\HH_n$ of type $A$ at a primitive $2\ell$th root of unity. In Section 6 we give the third main result Theorem \ref{mainthm3} of this paper, which give a new realization of the crystal of the integral highest weight module $V(\hat{\Lam}_0)$ of $U_v(\widehat{\mathfrak{sl}}_{\ell})$ using the modular representation theory of the Iwahori--Hecke algebra of type $B$ at a primitive $2\ell$th root of unity. We also obtain several multiplicity two results in Theorems \ref{mainthm2a}, \ref{mainthm2b} and Corollary \ref{maincor2} about certain two-steps restrictions and inductions.

\bigskip

\centerline{Acknowledgements}
\bigskip

The research was supported by the National Natural Science Foundation of China (No. 12171029).
\bigskip

\section{Preliminary}

Throughout this paper, let $F$ be an algebraically closed field and $1\neq q\in F^{\times}$. Let $\HH_n:=\HH_n(q)$ be the {\bf non-degenerate type $A$ affine Hecke algebra} over $F$ with Hecke parameter $q$. By definition, $\HH_n$ is the unital associative $F$-algebra with generators $T_1,\dots,T_{n-1}$, $X_1^{\pm 1},\dots,X_n^{\pm 1}$ and relations:
\begin{align}
(T_i-q)(T_i+1)&=0,\,\,\quad 1\leq i<n, \label{1} \\
 T_iT_{i+1}T_i&=T_{i+1}T_iT_{i+1},\,\,\quad 1\leq i\leq n-2,\label{2}\\
  T_iT_k&=T_kT_i,\,\,\quad |i-k|>1, \label{3}\\
 X_i^{\pm 1}X_k^{\pm 1}&=X_k^{\pm 1}X_i^{\pm 1},\,\,\quad 1\leq i,k\leq n, \label{4}\\
 X_kX_k^{-1}&=1=X_k^{-1}X_k,\,\, \quad 1\leq k\leq n, \label{5}\\
 T_iX_k&=X_kT_i,\,\,\quad k\neq i,i+1, \label{6}\\
 X_{i+1}&=q^{-1}T_iX_{i}T_i,\,\,\quad 1\leq i<n . \label{7}
\end{align}
Note that one can also replace the last relation above with the following: \begin{align}
X_{i+1}T_i=T_iX_i+(q-1)X_{i+1},\,\,\quad 1\leq i<n . \label{7a}
\end{align}

Let $\ast$ be the anti-isomorphism of $\HH_n$ which is defined on generators by $T_i^*=T_i$, $X_j^*=X_j$ for any $1\leq i<n, 1\leq j\leq n$. For any $a,b\in\N$ with $1\leq a<b$, we denote by $\HH_{\{a+1,a+2,\cdots,b\}}$ the affine Hecke algebra which is isomorphic to $\HH_{b-a}$ and whose defining generators and relations are obtained from that of $\HH_{b-a}$ by shifting all the subscript upwards by $a$. In particular, $\HH_n=\HH_{\{1,2,\cdots,n\}}$. For later use, we need certain elements of $\HH_n$ which are called intertwining elements.

\begin{dfn}\text{(\cite[(4.10)]{BK:GradedKL}, \cite{Lu}, \cite{Rog})} For each $1\leq k<n$, we define the $k$th intertwining element $\Phi_k$ to be: $$
\Phi_k:=(1-X_kX_{k+1}^{-1})T_k+1-q=T_k(1-X_{k+1}X_k^{-1})+(q-1)X_{k+1}X_k^{-1} .
$$
\end{dfn}
Note that the element $\Phi_k$ defined above is the same as $\Theta_k^*$ in the notation of \cite[(4.10)]{BK:GradedKL}. These elements have the following nice properties which are mostly easy to check.

\begin{lem}\text{(\cite[Proposition 5.2]{Lu}, \cite[(4.11),(4.12),(4.13)]{BK:GradedKL})}\label{Intertwin} 1) For any $1\leq k<n$, $$\Phi_k^2=(q-X_{k+1}X_k^{-1})(q-X_kX_{k+1}^{-1}).$$

2) For any $1\leq k<n-1$, $\Phi_k\Phi_{k+1}\Phi_k=\Phi_{k+1}\Phi_k\Phi_{k+1}$, and $\Phi_k\Phi_j=\Phi_j\Phi_k$ for any $1\leq j<n$ with $|j-k|>1$;

3) For any $1\leq k,j<n$ with $j\neq k,k+1$, we have that $$
\Phi_kX_k=X_{k+1}\Phi_k,\quad \Phi_kX_{k+1}=X_{k}\Phi_k,\quad \Phi_kX_j=X_{j}\Phi_k .
$$

4) For any $1\leq k,j<n$ with $j\neq k-1,k,k+1$, we have $\Phi_k T_j=T_j\Phi_k$.
\end{lem}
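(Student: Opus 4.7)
The plan is to verify all four statements by direct manipulation from the defining relations (\ref{1})--(\ref{7}), with the main preparatory step being two ``Demazure-type'' identities that are easy consequences of (\ref{7a}):
\[
T_k X_k = X_{k+1} T_k - (q-1) X_{k+1}, \qquad T_k X_{k+1} = X_k T_k + (q-1) X_{k+1}.
\]
The first is just (\ref{7a}) solved for $T_k X_k$; the second follows by substituting $X_{k+1}=q^{-1}T_k X_k T_k$ into $T_k X_{k+1}$ and reducing via $T_k^2=(q-1)T_k+q$. With these in hand, part (3) is short: the commutation $\Phi_k X_j = X_j \Phi_k$ for $j\neq k,k+1$ is immediate from (\ref{4}) and (\ref{6}) because $\Phi_k$ is a polynomial in $T_k, X_k, X_{k+1}$; for $\Phi_k X_k = X_{k+1}\Phi_k$ and $\Phi_k X_{k+1}= X_k \Phi_k$ one substitutes the Demazure-type identities into $\Phi_k X_k = [(1-X_k X_{k+1}^{-1})T_k + (1-q)] X_k$ and collects terms. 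Part (4) is similarly immediate from (\ref{3}), (\ref{4}), (\ref{6}) for $j$ well separated from $k$.

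For part (1), I first note that part (3) forces $\Phi_k^2$ to commute with $X_k$ and $X_{k+1}$ (apply $\Phi_k X_k = X_{k+1}\Phi_k$ twice), so $\Phi_k^2$ lies in the commutative subalgebra $F[X_1^{\pm1},\ldots,X_n^{\pm1}]$. To identify it explicitly I expand $\Phi_k^2$ using the left-hand form of $\Phi_k$ on one side and the right-hand form on the other, setting $Y = X_k X_{k+1}^{-1}$, so that
\[
\Phi_k^2 = \bigl[(1-Y)T_k + (1-q)\bigr]\bigl[T_k(1-Y^{-1}) + (q-1)Y^{-1}\bigr].
\]
Using $T_k^2=(q-1)T_k+q$ together with the identity $Y T_k - T_k Y^{-1} = -(q-1)(1+Y^{-1})$ (another consequence of the Demazure-type identities), all $T_k$-linear terms cancel and one is left with $q^2+1-q(Y+Y^{-1}) = (q-Y)(q-Y^{-1})$, which is the claimed formula.

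The main obstacle is part (2), the braid and commutation relations for the $\Phi_k$'s. The commutation $\Phi_k\Phi_j=\Phi_j\Phi_k$ for $|j-k|>1$ reduces via parts (3) and (4) to the Hecke relations (\ref{3}) and (\ref{6}). For the braid relation $\Phi_k\Phi_{k+1}\Phi_k = \Phi_{k+1}\Phi_k\Phi_{k+1}$ I would work after localising at the multiplicative set generated by the elements $1-X_iX_j^{-1}$ and introduce the normalised intertwiners $\widetilde{\Phi}_k = \Phi_k(1-X_kX_{k+1}^{-1})^{-1}$; these satisfy a simpler reflection-like identity and their braid relation reduces essentially to the braid relation for the $T_k$'s combined with the $X$-twisting of part (3). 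The difficulty is that one must then clear denominators and verify the equality integrally in $\HH_n$; this can be handled either by appealing to the faithfulness of the polynomial (Bernstein--Zelevinsky) representation or by expanding both sides in terms of a PBW basis and matching coefficients. Since each of the four parts is recorded in the cited references \cite{Lu} and \cite{BK:GradedKL}, an efficient write-up is to carry out the argument sketched above for (1), (3), (4) in detail and appeal to those sources for the braid relation in (2).
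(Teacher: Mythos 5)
The paper does not actually prove Lemma~\ref{Intertwin}: the statement is attributed entirely to \cite[Proposition 5.2]{Lu} and \cite[(4.11),(4.12),(4.13)]{BK:GradedKL}, and no argument is given in the text. Your proposal therefore supplies work the paper leaves implicit, and the computations you outline for (1), (3) and (4) are correct. In particular: the two Demazure-type rewritings $T_kX_k=X_{k+1}T_k-(q-1)X_{k+1}$ and $T_kX_{k+1}=X_kT_k+(q-1)X_{k+1}$ do follow from (\ref{7}) and the quadratic relation; the check of $\Phi_kX_k=X_{k+1}\Phi_k$ and $\Phi_kX_{k+1}=X_k\Phi_k$ goes through by collecting terms exactly as you describe; and the identity $T_kY^{-1}-YT_k=(q-1)(1+Y^{-1})$ with $Y=X_kX_{k+1}^{-1}$ is correct and does make the $T_k$-linear terms in $\Phi_k^2$ collapse, leaving $q^2+1-q(Y+Y^{-1})=(q-Y)(q-Y^{-1})$, which equals the stated $(q-X_{k+1}X_k^{-1})(q-X_kX_{k+1}^{-1})$. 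Part (4) is, as you say, immediate once one observes that $\Phi_k$ lies in the subalgebra generated by $T_k,X_k^{\pm1},X_{k+1}^{\pm1}$ and that $T_j$ commutes with all of these when $j\notin\{k-1,k,k+1\}$.

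The only genuinely incomplete piece is the braid relation in (2). You flag it yourself and propose either passing to the normalised intertwiners in a localisation (with the burden of clearing denominators) or appealing to the faithfulness of the Bernstein--Zelevinsky polynomial representation, and ultimately recommend citing \cite{Lu} or \cite{BK:GradedKL}. That is a sensible plan, and since the paper itself cites those sources rather than arguing, deferring to them for (2) is entirely consistent with the paper's level of detail. If you do want a self-contained proof of (2), be aware that the localisation route requires you to justify that $\HH_n$ embeds in its localisation at the set generated by the elements $1-X_iX_j^{-1}$ (these are non-zero-divisors, which is clear from the PBW/Bernstein basis), and the faithfulness route requires a reference for the faithfulness of the polynomial representation; neither is hard, but one of them must be stated, not just alluded to. Otherwise your argument is sound and, if anything, more explicit than what appears in the paper.
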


Let $\Sym_n$ be the symmetric group on $\{1,2,\cdots,n\}$. For each $1\leq k<n$, set $s_k:=(k,k+1)$. For any $w\in\Sym_n$ and any reduced expression $s_{i_1}\cdots s_{i_m}$ of $w$, we define $\Phi_w:=\Phi_{i_1}\cdots\Phi_{i_m}$. Then $\Phi_w$ depends only on $w$ but not on the choice of the reduced expression of $w$ because of the braid relations 2) in Lemma \ref{Intertwin}.

\begin{cor} Let $w\in\Sym_n$. If $1\leq k\leq n$ then $\Phi_w X_k=X_{w(k)}\Phi_w$.
\end{cor}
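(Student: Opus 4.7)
The plan is to argue by induction on the length $\ell(w)$ of $w\in\Sym_n$, with the base case furnished by part~3) of Lemma~\ref{Intertwin}. The three formulas listed there can be packaged into the single uniform identity
\[
\Phi_r X_k=X_{s_r(k)}\Phi_r,\qquad 1\le r<n,\ 1\le k\le n,
\]
where $s_r=(r,r+1)\in\Sym_n$ is the simple transposition: indeed, the three cases $k=r$, $k=r+1$, and $k\notin\{r,r+1\}$ recorded in Lemma~\ref{Intertwin}~3) correspond exactly to $s_r(k)=r+1$, $s_r(k)=r$, and $s_r(k)=k$. This establishes the claim whenever $w$ is a simple reflection.

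For the inductive step I would fix any reduced expression $w=s_{i_1}s_{i_2}\cdots s_{i_m}$ of $w\in\Sym_n$. By the definition of $\Phi_w$ given immediately above the corollary, together with the braid relations in part~2) of Lemma~\ref{Intertwin} (which guarantee that $\Phi_w$ is independent of the chosen reduced expression), one has $\Phi_w=\Phi_{i_1}\Phi_{i_2}\cdots\Phi_{i_m}$. Pushing $X_k$ across from the right one factor at a time using the base case gives
\[
\Phi_wX_k=\Phi_{i_1}\cdots\Phi_{i_{m-1}}\bigl(\Phi_{i_m}X_k\bigr)=\Phi_{i_1}\cdots\Phi_{i_{m-1}}X_{s_{i_m}(k)}\Phi_{i_m}=\cdots=X_{s_{i_1}\cdots s_{i_m}(k)}\Phi_w=X_{w(k)}\Phi_w,
\]
which is the desired identity.

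There is essentially no obstacle: the only point requiring care is the well-definedness of $\Phi_w$, but this has already been addressed in the paragraph preceding the corollary, so one is free to work with any convenient reduced expression when carrying out the iteration.
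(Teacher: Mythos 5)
Your proof is correct and amounts to the same thing as the paper's one-line proof (``This follows from Lemma \ref{Intertwin} 3)''); you have simply written out the iteration across a reduced expression that the paper leaves implicit. The observation that the three cases of Lemma \ref{Intertwin} 3) bundle into $\Phi_r X_k = X_{s_r(k)}\Phi_r$ is exactly the right way to see it, and the chain of equalities pushing $X_k$ from right to left is the standard argument.
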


\begin{proof} This follows from Lemma \ref{Intertwin} 3).
\end{proof}

\begin{cor}\label{PhiTcommute} Let $w\in\Sym_n$ and $1\leq k<n$. Suppose $w(k+1)=w(k)+1$. Then $\Phi_w T_k=T_{w(k)}\Phi_w$.
\end{cor}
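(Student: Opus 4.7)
The plan is to first promote the identity $\Phi_w T_k = T_{w(k)}\Phi_w$ to the cleaner identity $\Phi_w\Phi_k = \Phi_{w(k)}\Phi_w$ at the level of intertwining elements, and then recover the $T$-version by unpacking the definition of $\Phi_k$ and cancelling a Laurent polynomial factor.

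First I would check that, as elements of $\Sym_n$, one has $ws_k = s_{w(k)} w$. Since $w(k+1)=w(k)+1$, both permutations send $k\mapsto w(k)+1$ and $k+1\mapsto w(k)$, while agreeing with $w$ on every other index. Moreover, the hypothesis $w(k)<w(k+1)$ gives $\ell(ws_k)=\ell(w)+1$, and the identity $w^{-1}(w(k)+1)=k+1>k$ gives $\ell(s_{w(k)}w)=\ell(w)+1$. Appending the extra simple reflection to a reduced expression for $w$ (and using that $\Phi_v$ depends only on $v$, by Lemma \ref{Intertwin}(2)) yields
\[
\Phi_w\Phi_k \;=\; \Phi_{ws_k}\;=\;\Phi_{s_{w(k)}w}\;=\;\Phi_{w(k)}\Phi_w.
\]

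Next I would substitute $\Phi_k=(1-X_kX_{k+1}^{-1})T_k+(1-q)$ into the left-hand side and $\Phi_{w(k)}=(1-X_{w(k)}X_{w(k)+1}^{-1})T_{w(k)}+(1-q)$ into the right-hand side. Using Lemma \ref{Intertwin}(3) to push $X_k$ and $X_{k+1}^{-1}$ through $\Phi_w$ (so their indices are replaced by $w(k)$ and $w(k+1)=w(k)+1$), the summands $(1-q)\Phi_w$ cancel on both sides, leaving
\[
\bigl(1-X_{w(k)}X_{w(k)+1}^{-1}\bigr)\bigl(\Phi_w T_k - T_{w(k)}\Phi_w\bigr)\;=\;0.
\]

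Finally, I would cancel the factor $g:=1-X_{w(k)}X_{w(k)+1}^{-1}$. It lies in the commutative Laurent polynomial subring $P:=F[X_1^{\pm 1},\dots,X_n^{\pm 1}]$, is nonzero, and is therefore a non-zero-divisor in the integral domain $P$. By the Bernstein--Lusztig presentation, $\HH_n=\bigoplus_{v\in\Sym_n} P\cdot T_v$ as a free left $P$-module, so left multiplication by $g$ acts injectively on each summand $P\cdot T_v\cong P$, hence injectively on all of $\HH_n$. This forces $\Phi_w T_k-T_{w(k)}\Phi_w=0$, as required.

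The only step with any substance is the last one: one needs to know that the left $P$-module structure on $\HH_n$ given by the Bernstein--Lusztig basis makes left multiplication by a nonzero element of $P$ injective. This is standard for affine Hecke algebras of type $A$ and is used tacitly throughout the paper.
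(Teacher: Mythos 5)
Your proof is correct and follows essentially the same path as the paper's: both derive $\Phi_w\Phi_k=\Phi_{w(k)}\Phi_w$ from the length computation, expand one side of the definition of the intertwiners, push the $X$'s through $\Phi_w$ via Lemma~\ref{Intertwin}(3), and cancel a Laurent-polynomial factor that is not a zero divisor. The only cosmetic difference is that you use the form $\Phi_k=(1-X_kX_{k+1}^{-1})T_k+(1-q)$ and cancel a left factor, whereas the paper uses $\Phi_k=T_k(1-X_{k+1}X_k^{-1})+(q-1)X_{k+1}X_k^{-1}$ and cancels a right factor; your Bernstein--Lusztig justification of the non-zero-divisor step is a useful elaboration of what the paper leaves implicit.
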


\begin{proof} It is clear that $ws_k=s_{w(k)}w$. Since $w(k)<w(k+1)$, we have $\ell(ws_k)=\ell(w)+1=\ell(s_{w(k)}w)$. Therefore,
$\Phi_w\Phi_k=\Phi_{w(k)}\Phi_w$ by Lemma \ref{Intertwin}.

By definition, we have $$\begin{aligned}
\Phi_w\Phi_k&=\Phi_w\bigl(T_k(1-X_{k+1}X_k^{-1})+(q-1)X_{k+1}X_k^{-1}\bigr),\\
\Phi_{w(k)}\Phi_w&=\Bigl(T_{w(k)}(1-X_{w(k+1)}X_{w(k)}^{-1})+(q-1)X_{w(k+1)}X_{w(k)}^{-1}\Bigr)\Phi_w\\
&=T_{w(k)}\Phi_w(1-X_{k+1}X_k^{-1})+(q-1)\Phi_w X_{k+1}X_k^{-1}.
\end{aligned}$$
Now $\Phi_w\Phi_k=\Phi_{w(k)}\Phi_w$ implies that $$
\Phi_wT_k(1-X_{k+1}X_k^{-1})=T_{w(k)}\Phi_w(1-X_{k+1}X_k^{-1}) .
$$
Since $1-X_{k+1}X_k^{-1}$ is not a zero divisor of $\HH_n$, it follows that $\Phi_wT_k=T_{w(k)}\Phi_w$.
\end{proof}

Let $\HH_n\lmod$ be the category of finite dimensional $\HH_n$-modules. For any $M\in\HH_n\lmod$, we denote by $\head(M)$ the head of $M$ (i.e., the maximal semisimple quotient of $M$), and by $\soc M$ the socle of $M$ (i.e., the maximal semisimple submodule of $M$).

\begin{dfn} Let $m,n\in\N$. For each $M\in\HH_m\lmod$, $N\in\HH_n\lmod$, we define the convolution product $M\circ N$ of $M$ and $N$ to be: $$
M\circ N:=\ind_{m,n}^{m+n}M\boxtimes N \in\HH_{m+n}\lmod .
$$
Set $M\triangledown N:=\head(M\circ N)$. For any $k\in\N$, we define $N^{\circ k}:=\underbrace{N\circ N\circ\cdots\circ N}_{\text{$k$ copies}}$.
\end{dfn}

Let $m,n,k\in\N$. It is well-known that for any $M\in\HH_m\lmod$, $N\in\HH_n\lmod$ and $K\in\HH_k\lmod$, there is a canonical $\HH_{m+n+k}$-module isomorphism: \begin{equation}\label{transitive}
(M\circ N)\circ K\cong M\circ (N\circ K) .
\end{equation}

\begin{dfn}\label{sigmatau} Let $\sigma$ and $\tau$ be the automorphism of $\HH_n$ which is defined on generators as follows:
\begin{align*}
    &\sigma:\quad\,\, T_i\mapsto -q T_{n-i}^{-1},\quad X_j\mapsto X_{n+1-j}, \\
    &\tau :\quad\,\, T_i\mapsto T_i,\quad \quad X_j\mapsto -X_{j},
\end{align*}
for all $i=1,\cdots,n-1, j=1,\cdots,n$.
\end{dfn}

For any composition $\nu=(\nu_1,\nu_2,\cdots,\nu_r)$ of $n$, we define $\nu^*:=(\nu_r,\cdots,\nu_2,\nu_1)$ and $$
\HH_{\nu}:=\HH_{\nu_1}\boxtimes\HH_{\{\nu_1+1,\cdots,\nu_1+\nu_2\}}\boxtimes\cdots\boxtimes\HH_{\{n-\nu_r+1,\cdots,n\}} ,
$$
which is parabolic subalgebra of $\HH_n$. If $M\in\HH_{\nu}\lmod$, then we can twist the action with $\sigma$ to get a new module $M^\sigma\in\HH_{\nu^*}\lmod$.

\begin{lem} Let $M\in\HH_m\lmod$ and $N\in\HH_n\lmod$. Then $$
(M\circ N)^{\sigma}\cong N^{\sigma}\circ M^{\sigma} .
$$
\end{lem}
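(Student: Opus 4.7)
The plan is to construct an explicit $\HH_{m+n}$-module isomorphism
$$\Psi: N^{\sigma}\circ M^{\sigma}\longrightarrow (M\circ N)^{\sigma},\qquad h\otimes(v\otimes u)\longmapsto \sigma(h)\otimes (u\otimes v),$$
using the universal property of induction. The input one needs is a good understanding of how $\sigma$ interacts with parabolic subalgebras of $\HH_{m+n}$.

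First I would check on generators that $\sigma^{2}=\id_{\HH_{m+n}}$, so $\sigma$ is an involution. Next, by unwinding the definition $\sigma(T_i)=-qT_{m+n-i}^{-1}$ and $\sigma(X_j)=X_{m+n+1-j}$, I would verify that $\sigma$ restricts to an algebra isomorphism
$$\HH_{\{1,\ldots,m\}}\otimes\HH_{\{m+1,\ldots,m+n\}}\;\stackrel{\sim}{\longrightarrow}\;\HH_{\{n+1,\ldots,m+n\}}\otimes\HH_{\{1,\ldots,n\}},$$
which swaps the two tensor factors. Moreover, using the obvious index-shift identifications of the two copies of $\HH_m$ with $\HH_m$, and of the two copies of $\HH_n$ with $\HH_n$, the induced map on $\HH_m$ is precisely $\sigma$ on $\HH_m$, and similarly on $\HH_n$. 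From this it follows that pulling the $\HH_{(m,n)}$-module $M\boxtimes N$ back along $\sigma|_{\HH_{(n,m)}}:\HH_{(n,m)}\to\HH_{(m,n)}$ yields an $\HH_{(n,m)}$-module canonically isomorphic to $N^{\sigma}\boxtimes M^{\sigma}$.

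With this identification in place, the formula for $\Psi$ makes sense as a map between the induced modules, and the remaining verifications are short. Well-definedness across the balanced tensor product over $\HH_{(n,m)}$ reduces to chasing an element of $\HH_{\{1,\ldots,n\}}$ (resp.\ $\HH_{\{n+1,\ldots,m+n\}}$) through $\sigma$ into $\HH_{\{m+1,\ldots,m+n\}}$ (resp.\ $\HH_{\{1,\ldots,m\}}$) and comparing it with its action on $N^{\sigma}$ (resp.\ $M^{\sigma}$); this is exactly the content of the previous paragraph. The $\HH_{m+n}$-equivariance with respect to the twisted action $h\star x=\sigma(h)x$ on the target is immediate from the multiplicativity of $\sigma$. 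For bijectivity, the formula $h\otimes(u\otimes v)\mapsto\sigma(h)\otimes(v\otimes u)$ defines a two-sided inverse by the same argument, since $\sigma^{2}=\id$.

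The only real obstacle is the bookkeeping: one must distinguish the three copies of $\HH_m$ (intrinsic, $\HH_{\{1,\ldots,m\}}$, and $\HH_{\{n+1,\ldots,m+n\}}$) and the three copies of $\HH_n$, and confirm that the shift identifications genuinely intertwine $\sigma|_{\HH_{m+n}}$ restricted to a parabolic factor with the intrinsic $\sigma$ on the corresponding small algebra. Once this is nailed down (a direct check on $T_i$ and $X_j$ in each parabolic), the pulled-back module is really $N^{\sigma}\boxtimes M^{\sigma}$ on the nose, and the rest of the argument is purely formal.
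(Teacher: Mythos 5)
The paper states this lemma without proof, so there is no ``paper approach'' to compare against; your argument is the standard one and it is correct. The essential points all check out: $\sigma^{2}=\id$ on $\HH_{m+n}$; a direct verification on the generators (using $\sigma(T_i)=-qT_{m+n-i}^{-1}$, $\sigma(X_j)=X_{m+n+1-j}$) shows that $\sigma$ carries $\HH_{(m,n)}=\HH_{\{1,\dots,m\}}\otimes\HH_{\{m+1,\dots,m+n\}}$ isomorphically onto $\HH_{(n,m)}$ with the two factors swapped, and that the index-shift identifications intertwine this restriction with the intrinsic automorphisms $\sigma$ on $\HH_m$ and $\HH_n$, so that the pullback of $M\boxtimes N$ along $\sigma|_{\HH_{(n,m)}}$ is $N^{\sigma}\boxtimes M^{\sigma}$; consequently $\Psi\bigl(h\otimes(v\otimes u)\bigr)=\sigma(h)\otimes(u\otimes v)$ is well defined across the balanced tensor product, is $\HH_{m+n}$-equivariant for the twisted action on the target, and has the obvious two-sided inverse of the same shape since $\sigma^{2}=\id$.
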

\medskip

Let $1<e\in\N$, $I:=\Z/e\Z$, and $q$ is a primitive $e$th root of unity in $F$.
When such a $q$ is fixed, we can identified $I$ with $q^I\subset F^{\times}$ via $i\mapsto q^i$.\medskip

\begin{dfn}\label{def:intrep} Let $\Rep_I\HH_n$ be the full subcategory of $\HH_n\lmod$ consisting of all modules $M$ such that all eigenvalues of $X_1,\cdots,X_n$ on $M$ belong to $q^I$. If $M\in \Rep_I\HH_n$ then we say that the $\HH_n$-module $M$ is integral.
\end{dfn}

In \cite{G} and \cite{GV}, Grojnowski and Vazirani work for both the non-degenerate type $A$ affine Hecke algebras and the degenerate type $A$ affine Hecke algebras. The theory for the non-degenerate case is parallel to the theory for the degenerate case. Kleshchev \cite{Klesh:book} gives an excellent account and explanation of Grojnowski's approach in the case of degenerate type $A$ affine Hecke algebras. In most of the time the results and their proof in Kleshchev's book \cite{Klesh:book} can be transformed into the case of non-degenerate affine Hecke algebras without any difficulty. In such case we shall simply cite them as ``the non-degenerated version" of the corresponding result in \cite{Klesh:book} whenever we can not find a suitable reference elsewhere.

Let $M\in\Rep_I\HH_n$. For any $\ua:=(a_1,\cdots,a_n)\in I^n$, let  $$
M_{\ua}:=\bigl\{x\in M\bigm|\text{$(X_j-q^{a_j})^Nx=0$ for any $1\leq j\leq n$ and $N\gg 0$}\bigr\} .
$$
We define the character of $M$ to be \begin{equation}
\charac M:=\sum_{\underline{a}\in I^n}\dim M_{\underline{a}}[a_1,a_2,\cdots,a_n] .
\end{equation}

For any  $1\leq t\leq n$ and $\ub:=(b_t,b_{t+1},\cdots,b_n)\in I^{n-t+1}$, we define \begin{equation}\label{Delta1}
\Delta_{\ub}M:=\oplus_{\substack{\ua=(a_1,\cdots,a_n)\in I^n\\ a_j=b_j, \forall\,t\leq j\leq n}}M_{\ua} .
\end{equation}
If $t=n$, $b_n=b$, then we write $\Delta_bM$ instead of $\Delta_{\ub}M$, see \cite[(5.1)]{Klesh:book}. In this case, $\Delta_bM$ is simply the generalized $q^b$-eigenspace of $X_n$ on $M$. That is, \begin{equation}\label{Delta2}
\Delta_bM=\oplus_{\substack{\ua=(a_1,\cdots,a_n)\in I^n\\ a_n=b}}M_{\ua}=\bigl\{x\in M\bigm|\text{$(X_n-q^{b})^Nx=0$ for any $N\gg 0$}\bigr\} .
\end{equation}

It is obvious that $\Delta_{b}M$ is $\HH_{n-1,1}$-stable. In general, note that though $\Delta_{\ub}M$ is still $\HH_{t-1}$-stable, it is not clear whether it is $\HH_{t-1,n-t+1}$-stable or not unless $b_t = \dots = b_{n-1} = b_n = b$. So in general $\Delta_{\ub}$ does not define a functor from $\HH_n\lmod$ to $\HH_{t-1,n-t+1}\lmod$.

Let $1\leq n\in\N$. Following \cite[\S8]{G} (see also \cite[(5.6)]{G}), we define the functor $$
e_i:=\res^{n-1,1}_{n-1}\circ\Delta_i:\,\, \Rep_I\HH_n\rightarrow \Rep_I\HH_{n-1} , $$
The functor $e_i$ is denoted by $e_i^*$ in the notation of \cite[\S8]{G}. Note that if $e_iM\neq 0$, then $e_iM$ is a self-dual module by the non-degenerate version of \cite[Lemma 7.3.1, Remark 8.2.4, Theorem 8.2.5]{Klesh:book}.

Following \cite{G} and \cite{Klesh:book}, for any $i\in I$ and any simple module $M\in\Rep_I\HH_n$, we define $$
\widetilde{e}_iM:=\soc e_iM\in\Rep_I\HH_{n-1},\quad \widetilde{f}_iM:=\head(M\circ L(i))\in\Rep_I\HH_{n+1},
$$
where $L(i)$ is the simple $\HH_1 = F[x_1]$-module on which $x_1$ acts as multiplication by $q^i$.
By \cite[Theorem 9.4]{G} (or rather the non-degenerate version of \cite[Lemma 5.15, Corollary 5.17]{Klesh:book}), we know that \begin{equation}\label{widetildeef}
\text{$\widetilde{f}_iM$ is always nonzero and simple, while $\widetilde{e}_iM$ is either $0$ or a simple module}.
\end{equation}
let $\mathbf{1}$ denote the trivial simple module of $\HH_0\cong K$. For any $(i_1,\cdots,i_n)\in I^n$, we define \begin{equation}\label{simples}
L(i_1,i_2,\cdots,i_n):=\widetilde{f}_{i_n}\cdots \widetilde{f}_{i_2}\widetilde{f}_{i_1}\mathbf{1}.
\end{equation}
Then $0\neq L(i_1,\cdots,i_n)\in\Rep_I\HH_n$ is a simple module.

\begin{lem}\label{2functorsIso} Let $i,j\in I$ with $i-j\neq\pm1$. There is an isomorphism of functors: $e_ie_{j}\cong e_{j}e_i$.
\end{lem}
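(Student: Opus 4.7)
The plan is to realize both functors concretely and then exhibit the isomorphism via the intertwining element $\Phi_{n-1}$. For $M\in\Rep_I\HH_n$, unwinding the definition $e_i=\res^{n-1,1}_{n-1}\circ\Delta_i$ identifies $e_ie_jM$ with the joint generalized eigenspace $\{x\in M\mid (X_{n-1}-q^i)^Nx=0=(X_n-q^j)^Nx\text{ for }N\gg0\}$, and similarly $e_je_iM$ with the joint eigenspace for $(q^j,q^i)$. Since both spaces sit inside $M$, the natural candidate for an isomorphism is left multiplication by an element of $\HH_n$ that exchanges $X_{n-1}$ and $X_n$, namely $\Phi_{n-1}$.

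First I would check that $\Phi_{n-1}$ maps $e_ie_jM$ into $e_je_iM$. By Lemma \ref{Intertwin}(3), $\Phi_{n-1}X_{n-1}=X_n\Phi_{n-1}$ and $\Phi_{n-1}X_n=X_{n-1}\Phi_{n-1}$, so $(X_{n-1}-q^j)^N\Phi_{n-1}x=\Phi_{n-1}(X_n-q^j)^Nx=0$ and $(X_n-q^i)^N\Phi_{n-1}x=\Phi_{n-1}(X_{n-1}-q^i)^Nx=0$ for $x\in e_ie_jM$. Next, Lemma \ref{Intertwin}(3,4) shows $\Phi_{n-1}$ commutes with $X_1,\ldots,X_{n-2}$ and with $T_1,\ldots,T_{n-3}$, so left multiplication by $\Phi_{n-1}$ is $\HH_{n-2}$-linear. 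Naturality in $M$ is automatic, since the map is given by the action of a fixed algebra element and therefore commutes with every $\HH_n$-module homomorphism.

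For bijectivity I would invoke Lemma \ref{Intertwin}(1): on $M$ one has $\Phi_{n-1}^2=(q-X_nX_{n-1}^{-1})(q-X_{n-1}X_n^{-1})$. Restricted to $e_ie_jM$, the operator $X_nX_{n-1}^{-1}$ has sole generalized eigenvalue $q^{j-i}$ and $X_{n-1}X_n^{-1}$ has sole generalized eigenvalue $q^{i-j}$, so each factor equals a nonzero scalar ($q-q^{j-i}$, resp.\ $q-q^{i-j}$) plus a nilpotent, the nonvanishing of the scalars being precisely the hypothesis $i-j\neq\pm1$ in $I=\Z/e\Z$ (since $q$ is a primitive $e$th root of unity). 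Hence $\Phi_{n-1}^2$ acts invertibly on $e_ie_jM$. Applying the same analysis after swapping $i$ and $j$ shows $\Phi_{n-1}^2$ is invertible on $e_je_iM$ as well. Consequently the composition
\[
e_ie_jM\xrightarrow{\Phi_{n-1}}e_je_iM\xrightarrow{\Phi_{n-1}}e_ie_jM
\]
is an isomorphism, and symmetrically for the other composition, so $\Phi_{n-1}$ yields the required natural isomorphism $e_ie_j\cong e_je_i$.

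There is no real obstacle here; the argument is a clean application of the intertwiner formalism. The only point requiring care is to verify that ``generalized eigenvalue $\alpha\neq 0$'' together with ``$q-\alpha\neq 0$'' really forces the operator $q-X_nX_{n-1}^{-1}$ to be invertible (nilpotent correction to a nonzero scalar), which is standard but depends on $q^{j-i}\neq q$ and $q^{i-j}\neq q$ holding \emph{modulo $e$}, and this is exactly where the hypothesis $i-j\neq\pm1$ in $I$ enters.
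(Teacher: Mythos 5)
Your proof is correct and follows exactly the same route as the paper's: realize $e_ie_j M$ and $e_je_i M$ as joint generalized eigenspaces inside $M$, intertwine them by left multiplication with $\Phi_{n-1}$, and use Lemma \ref{Intertwin}(1) together with $i-j\neq\pm1$ and $q$ a primitive $e$th root of unity to see that $\Phi_{n-1}^2$ acts invertibly. The only cosmetic difference is that the paper first sets aside the case $i=j$ as trivial, whereas your eigenvalue analysis handles it uniformly (the relevant scalar $q-q^0=q-1$ is nonzero since $q\neq 1$); you also spell out the ``nonzero scalar plus nilpotent'' argument for invertibility, which the paper leaves implicit.
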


\begin{proof} Since the case $i = j$ is trivial, we assume that $i \neq j$. For any $M\in\Rep_I\HH_n$, we use $\Phi: M\rightarrow M$ to denote the map given by left multiplication with $\Phi_{n-1}$. By Lemma \ref{Intertwin}, it is clear that $\Phi$ is an $\HH_{n-2}$-module homomorphism. We claim that $\Phi(e_ie_{j}M)=e_{j}e_iM$.

In fact, for any $x\in e_ie_{j}M$, $(X_{n-1}-q^i)^kx=0=(X_n-q^{j})^kx$ for $k\gg 0$. Therefore, using Lemma \ref{Intertwin}, for any $k\gg 0$, $$\begin{aligned}
& (X_{n-1}-q^{j})^k\Phi(x)=(X_{n-1}-q^{j})^k\Phi_{n-1}(x)=\Phi_{n-1}(X_{n}-q^{j})^k(x)=0,\\
& (X_{n}-q^{i})^k\Phi(x)=(X_{n}-q^{i})^k\Phi_{n-1}(x)=\Phi_{n-1}(X_{n-1}-q^{i})^k(x)=0,\\
\end{aligned}$$
which implies that $\Phi(e_ie_{j}M)\subseteq e_{j}e_iM$. Similarly, $\Phi(e_{j}e_iM)\subseteq e_ie_{j}M$. To finish the proof, it suffices to show that left multiplication with $\Phi_{n-1}^2$ defines an $\HH_{n-2}$-module automorphism of $e_ie_{j}M$. But this follows from Lemma \ref{Intertwin} 1) and the assumption that $i-j\neq\pm1$ and $q=\sqrt[e]{1}$ for some $e>1$.
\end{proof}

Following \cite{G} and \cite{Klesh:book}, for any $i\in I$ and any simple module $M\in\HH_n\lmod$, we define $$
\eps_i(M):=\max\{m\geq 0|\Delta_{i^m}M\neq 0\}=\max\{m\geq 0|\widetilde{e}_i^mM\neq 0\}.
$$

\begin{lem}\label{eijcommutes} Let $i,j\in I$ with $i-j\neq 0,\pm1$. For any simple module $M\in\HH_n\lmod$, we have that $$
\wf_i\wf_j M\cong\wf_j\wf_i M,\quad \we_i\we_j M\cong\we_j\we_i M,\quad \we_i\wf_j M\cong\wf_j\we_i M,\quad \eps_i(\wf_j M)=\eps_i(M).
$$
\end{lem}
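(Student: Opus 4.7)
The plan is to reduce all four assertions to a single commutation isomorphism: $L(i)\circ L(j)\cong L(j)\circ L(i)$ as $\HH_2$-modules, the convolution-side analogue of Lemma~\ref{2functorsIso}. Since $L(i), L(j)$ are one-dimensional, $L(j)\circ L(i)$ is two-dimensional with a distinguished generator of $(X_1,X_2)$-weight $(q^j,q^i)$. Applying the intertwining element $\Phi_1$ from Lemma~\ref{Intertwin} produces a nonzero vector of weight $(q^i,q^j)$, i.e.\ a generator of $L(i)\circ L(j)$, and by Lemma~\ref{Intertwin}(1) the operator $\Phi_1^2$ acts there as the scalar $(q-q^{i-j})(q-q^{j-i})$. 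This scalar is nonzero precisely under the hypothesis $i-j\neq 0,\pm 1$, so $\Phi_1$ is invertible and supplies the required isomorphism.

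For $\wf_i\wf_jM\cong\wf_j\wf_iM$, associativity of $\circ$ together with the previous paragraph yields $M\circ L(j)\circ L(i)\cong M\circ L(i)\circ L(j)$. I would then invoke the non-degenerate analogue of the Grojnowski--Vazirani result (\cite{G}; compare the versions in \cite{Klesh:book}) that $\head(M\circ L(i_1)\circ\cdots\circ L(i_r))$ is irreducible and equal to $\wf_{i_r}\cdots\wf_{i_1}M$; applying this to both sides of the isomorphism and comparing heads gives the claim. Dually, $\we_i\we_jM\cong\we_j\we_iM$ follows by applying Lemma~\ref{2functorsIso} to obtain $e_ie_jM\cong e_je_iM$ and taking socles, using the self-duality of $e_ie_jM$ and the dual assertion that $\soc(e_{i_1}\cdots e_{i_r}M)$ is irreducible.

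For $\we_i\wf_jM\cong\wf_j\we_iM$, I would split into two cases. If $\we_iM=0$, then $e_iM=0$; a Mackey decomposition of $e_i(M\circ L(j))$ under $i\neq j$ collapses to the single surviving layer $(e_iM)\circ L(j)=0$, so $e_i\wf_jM=0$ and both sides vanish. If $\we_iM\neq 0$, the crystal identities $\we_i\wf_iN\cong N$ (always) and $\wf_i\we_iM\cong M$ (valid since $\we_iM\neq 0$, both immediate from Frobenius reciprocity and (\ref{widetildeef})) combine with the just-established $\wf$-commutation: $\wf_i(\wf_j\we_iM)\cong\wf_j(\wf_i\we_iM)\cong\wf_jM$, and since $\we_i\wf_jM$ is uniquely characterized as the simple $N$ with $\wf_iN\cong\wf_jM$ (by $\we_i\wf_i=\id$), we conclude $\we_i\wf_jM\cong\wf_j\we_iM$. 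Finally, $\eps_i(\wf_jM)=\eps_i(M)$ follows by iteration: the preceding commutation gives $\we_i^m\wf_jM\cong\wf_j\we_i^mM$ for all $m$, and by (\ref{widetildeef}) $\wf_j$ preserves non-vanishing of irreducibles, so the two sides vanish simultaneously and the maxima agree.

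The principal obstacle is the irreducibility of $\head(M\circ L(i_1)\circ\cdots\circ L(i_r))$ used in the $\wf$-commutation (and its socle dual used in the $\we$-commutation). This is the non-degenerate version of the fundamental Grojnowski--Vazirani result and can be handled by induction on $r$ via an argument parallel to \cite{Klesh:book}; alternatively, one could invoke the theory of real simple modules and strongly commuting pairs from \cite{KKKO} after transporting to the quiver Hecke algebra setting via the equivalence of \cite{HL} noted in the introduction. No new representation-theoretic input beyond what is already developed in Section~2 appears to be required.
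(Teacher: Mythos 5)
Your treatment of the $\wf$-commutation and the $\we\wf$-commutation is essentially the paper's proof, and the intertwiner calculation giving $L(i)\circ L(j)\cong L(j)\circ L(i)$ is a valid alternative to the paper's citation of \cite[Theorem~6.1.4]{Klesh:book}. The claim that $\head(M\circ L(i_1)\circ\cdots\circ L(i_r))$ is always irreducible and equal to $\wf_{i_r}\cdots\wf_{i_1}M$ is \emph{not} a known result for arbitrary sequences (that fails already when two adjacent $i_k$'s differ by one), but you correctly flag this and the real-simple-module route via Lemma~\ref{real3properties} is what actually makes the two-step case go through, exactly as in the paper: $L(i,j)\cong L(i)\circ L(j)\cong L(j)\circ L(i)$ is real simple, so $M\circ L(i)\circ L(j)=M\circ L(i,j)$ has simple head and both $(M\triangledown L(j))\triangledown L(i)$ and $(M\triangledown L(i))\triangledown L(j)$ are forced to equal it.

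The genuine gap is in your proposed argument for $\we_i\we_jM\cong\we_j\we_iM$. You invoke ``the dual assertion that $\soc(e_{i_1}\cdots e_{i_r}M)$ is irreducible.'' This is not an established fact, and it does not follow from self-duality of $e_ie_jM$ together with irreducibility of the head of the corresponding convolution: the problem is that an irreducible $\HH_{n-2}$-submodule of $e_ie_jM$ need not have $X_{n-1}$ and $X_n$ acting by scalars, so the Frobenius-reciprocity identification $\Hom_{\HH_{n-2}}(N,e_ie_jM)\cong\Hom_{\HH_n}(N\circ L(i,j),M)$ does not come for free. Indeed, establishing precisely this kind of socle irreducibility for $e_{\wi}M$ (with $j=i+\ell$) is the content of Proposition~\ref{RedBao}, and there the paper needs a separate argument (the center-element trick forcing $X_{n-1},X_n$ to act by scalars) together with the extra hypothesis $\cha F\neq 2$; it is not a formal dual of the head statement. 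The paper circumvents this entirely: it proves the $\we\wf$-commutation and $\eps_i(\wf_jM)=\eps_i(M)$ first, uses these to deduce that $\we_i\we_jM\neq 0\neq\we_j\we_iM$ whenever $\we_iM\neq 0\neq\we_jM$, then observes that $\wf_i\wf_j$ sends both $\we_j\we_iM$ and $\we_i\we_jM$ back to $M$ and invokes the injectivity of $\wf$ on isoclasses of irreducibles (\cite[Corollary~5.2.4]{Klesh:book}); the case where one of $\we_iM,\we_jM$ vanishes is handled by the shuffle lemma. You should replace your socle-irreducibility step with this argument, reordering so that the $\we\wf$-commutation and the $\eps$-equality come before the $\we\we$-commutation rather than after.
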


\begin{proof} Since $i-j\neq 0,\pm1$, $L(i,j)\cong L(i)\circ L(j)\cong L(j)\circ L(i)\cong L(j,i)$ by the non-degenerate version of \cite[Theorem 6.1.4]{Klesh:book}. It is easy to see that $L(i,j)$ is a real simple module (cf. Definition \ref{realsim}). Applying Lemma \ref{real3properties}, we get that $$\begin{aligned}
\wf_i\wf_j M&=\bigl(M\triangledown L(j)\bigr)\triangledown L(i)\cong M\triangledown \bigl(L(j)\circ L(i)\bigr)\cong M\triangledown L(j,i)\cong M\triangledown L(i,j)\\
&\cong M\triangledown \bigl(L(i)\circ L(j)\bigr)\cong \bigl(M\triangledown L(i)\bigr)\triangledown L(j)=\wf_j\wf_i M .
\end{aligned}$$
As a result, we see that if $N = \we_i M\neq 0$ then $$
\we_i\wf_j M\cong\we_i\wf_j\wf_i N\cong\we_i\wf_i\wf_j N\cong\wf_j N\cong\wf_j\we_i M;
$$
If $\we_i M = 0$, $\we_i\wf_j M = 0$ by the shuffle lemma \cite[Lemma 2.4]{GV}. In a word, we have $\we_i\wf_jM\cong \wf_j\we_iM$ for any simple module $M$. Similarly, $\we_j\wf_iM\cong\wf_i\we_jM$ for any simple module $M$. As a consequence, $\eps_i(\wf_j M)=\eps_i(M)$.

It remains to show $\we_j\we_i M\cong \we_i\we_j M$. Assume first that $\we_i M\neq 0 \neq \we_j M$, then $\eps_i(M) = \eps_i(\we_j M) > 0$ by the last paragraph, and hence $\we_i\we_j M \neq 0$. In a similar way we show that $\we_j\we_i M \neq 0$. Thus in this case we have
$$\wf_i\wf_j(\we_j\we_i M) \cong M \cong \wf_j\wf_i(\we_i\we_j M) \cong \wf_i\wf_j(\we_i\we_j M)$$
which implies (by \cite{G} and \cite[Corollary 5.2.4]{Klesh:book}) that $\we_i\we_j M\cong\we_j\we_i M$. And if either $\we_iM = 0$ or $\we_jM = 0$, then it is easy to see that $\we_i\we_j M = 0 = \we_j\we_i M$ by the shuffle lemma \cite[Lemma 2.4]{GV}.
This completes the proof of the lemma.
\end{proof}

\medskip

From now on and until the end of this section, we assume that $1<\ell\in\N$, $e=2\ell$, $I:=\Z/2\ell\Z$, and $q:=\xi$ is a primitive $2\ell$th root of unity in $F$. To simplify notations, for any $i\in I$ and $j\in\Z$, we shall often write $i+j\in I$ instead of $i+j+2\ell\Z\in I$.
\medskip

\begin{dfn}\label{2Steps} Let $i\in I$ and set $\wi:=(i,i+\ell)\in I^2$. If $n\geq 2$ then we define the two-steps restriction functor $$
e_{\wi}:=e_i e_{i+\ell}:\,\, \Rep_I\HH_n\rightarrow \Rep_I\HH_{n-2} .
$$
\end{dfn}


For any $\Sym_n$-orbit $\gamma=\Sym_n\cdot (\xi^{i_1},\cdots,\xi^{i_n})$ of $\Sym_n$ on $(\xi^I)^n$, it determines a central character $$\begin{aligned}
\chi_\gamma:\,\, Z(\HH_n)&\rightarrow F,\\
f(X_1,\cdots,X_n)&\mapsto f(\xi^{i_1},\cdots,\xi^{i_n}),\quad\forall\,f(X_1,\cdots,X_n)\in K[X_1^{\pm 1},\cdots,X_n^{\pm 1}]^{\Sym_n}=Z(\HH_n) ,
\end{aligned}
$$
where $Z(\HH_n)$ denotes the center of $\HH_n$. We use $(\HH_n\lmod)[\gamma]$ to denote the block of $\Rep_I(\HH_n)$ corresponding to the central character $\chi_\gamma$ determined by $\gamma$.
This is the full subcategory of $\Rep_I(\HH_n)$ consisting of $M$ with $\charac M\in\bigoplus_{(\xi^{a_1},\cdots,\xi^{a_n})\in \gamma}\Z[a_1,\cdots,a_n]$, see \cite[(4.3)]{Klesh:book} for the corresponding definition in the degenerate setting.

\begin{dfn}\label{def:conseq} For each $i\in I$ and $m\in\N$, we define $$
\wi^m:=(\underbrace{\wi,\wi,\cdots,\wi}_{\text{$m$ copies}})=(\underbrace{i,i+\ell,i,i+\ell,\cdots,i,i+\ell}_{\text{$2m$ terms}}).
$$
and $$
\gamma_{i,m}:=\Sym_{2m}\cdot (\underbrace{\xi^i,-\xi^i,\xi^i,-\xi^i,\cdots,\xi^i,-\xi^i}_{\text{$2m$ terms}}) .
$$
\end{dfn}

\begin{lem} \label{uniquesimple} Let $i\in I$ and $n\in\N$. For any permutation $(i_1,\dots,i_{2n})$ of $\wi^n$, $$
L(\wi^n)\cong L(i_1)\circ L(i_2)\circ \cdots\circ  L(i_{2n})\cong L(i^n)\circ L((i+\ell)^n)
$$
is the unique simple module in $(\HH_{2n}\lmod)[\gamma_{i,n}]$.
\end{lem}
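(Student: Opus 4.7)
My plan is to prove the lemma in four steps. \emph{First}, I would verify that $L(i)\circ L(i+\ell)$ is irreducible and isomorphic to $L(i+\ell)\circ L(i)$. Since $\ell>1$, the residue difference $i-(i+\ell) = -\ell$ is not congruent to $\pm 1\pmod{2\ell}$ in $I$, so this is exactly the non-degenerate version of \cite[Theorem 6.1.4]{Klesh:book}.

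\emph{Second}, I would rearrange an arbitrary convolution product of $2n$ factors. Using the commutation from the first step between any two adjacent factors of $L(i_1)\circ\cdots\circ L(i_{2n})$ whose residues form the pair $\{i,i+\ell\}$, together with the associativity \eqref{transitive} of convolution, a bubble-sort argument shows that every permutation of $\wi^n$ yields the same convolution; in particular,
$$L(i_1)\circ\cdots\circ L(i_{2n}) \cong L(i)^{\circ n}\circ L(i+\ell)^{\circ n}.$$

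\emph{Third}, I would identify this common module with $L(\wi^n)$ and establish irreducibility. By the non-degenerate analog of Kato's theorem (\cite[\S7]{Klesh:book}) the convolution $L(i)^{\circ n}$ is irreducible, and unwinding the definition $L(i^n)=\wf_i^n\mathbf{1}$ via the non-degenerate version of \cite[Corollary 5.2.4]{Klesh:book} then gives $L(i^n) = L(i)^{\circ n}$ by induction on $n$ (and similarly $L((i+\ell)^n) = L(i+\ell)^{\circ n}$). In particular $L(i^n)$ is a real simple module in the sense of Section 3, since $L(i^n)\circ L(i^n)\cong L(i)^{\circ 2n}\cong L(i^{2n})$ is irreducible. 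Combined with the commutation $L(i^n)\circ L((i+\ell)^n)\cong L((i+\ell)^n)\circ L(i^n)$ obtained in the second step, the criterion for real simple modules from \cite{KKKO} (to be reviewed in Section 3) yields that $L(i^n)\circ L((i+\ell)^n)$ is already simple. Being simple, it equals its head $\wf_{i+\ell}^n\wf_i^n\mathbf{1}$, which after iterating Lemma \ref{eijcommutes} to reorder the applied Kashiwara operators coincides with $L(\wi^n)$.

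\emph{Fourth}, for uniqueness in the block, the Grojnowski-Vazirani classification (\cite{G}, \cite{GV}) presents every simple of $\Rep_I\HH_{2n}$ as $L(j_1,\dots,j_{2n})$ for some $(j_1,\dots,j_{2n})\in I^{2n}$, and the central character of such a simple is determined by the multiset $\{\xi^{j_k}\}$. Hence every simple in $(\HH_{2n}\lmod)[\gamma_{i,n}]$ is $L(j_1,\dots,j_{2n})$ for some permutation of $\wi^n$, and another iteration of Lemma \ref{eijcommutes} shows all such simples are isomorphic to $L(\wi^n)$. The hardest step is the irreducibility of $L(i^n)\circ L((i+\ell)^n)$ in the third step, which relies on the forthcoming real simple module machinery of \cite{KKKO} developed in Section 3; the remaining arguments are routine applications of Lemma \ref{eijcommutes} and the Grojnowski-Vazirani classification.
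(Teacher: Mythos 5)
Your proof is correct and follows the same overall plan as the paper's (commute adjacent $L(i)$ and $L(i+\ell)$ factors, rearrange to $L(i^n)\circ L((i+\ell)^n)$, establish irreducibility, identify the result with $L(\wi^n)$, and finish uniqueness by the Grojnowski--Vazirani block classification). The one genuine divergence is in the irreducibility step: where you invoke the real simple module machinery of \cite{KKKO} (so $L(i^n)$ real and $L(i^n)\circ L((i+\ell)^n)\cong L((i+\ell)^n)\circ L(i^n)$ imply simplicity via Lemma \ref{real3properties}), the paper simply observes that $L(i^n)$ and $L((i+\ell)^n)$ are classical Kato modules and cites \cite[Proposition 3.3]{GV}, i.e.\ the long-standing fact that convolutions of Kato modules with non-adjacent residues are irreducible. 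Your route is valid, since the \cite{KKKO} criterion is an external result rather than anything in Section 3 that logically depends on this lemma, so no actual circularity arises even though Lemma \ref{uniquesimple} is used later in Section 3 (e.g.\ in Lemma \ref{widehatLemma}). But the paper's choice is cleaner precisely because it avoids reaching forward: the Kato result is an order of magnitude more elementary, was already available to Grojnowski--Vazirani, and requires no appeal to the forthcoming real-simple framework. Your version buys nothing here that the elementary argument does not already give. A minor stylistic point: for the final identification $L(i^n)\circ L((i+\ell)^n)\cong L(\wi^n)$ you reorder the Kashiwara operators via Lemma \ref{eijcommutes}; the paper does not invoke that lemma explicitly (it reads off the answer from the block classification and \cite[Lemma 6.1.4]{Klesh:book}), but your alternative is perfectly sound.
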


\begin{proof} Since $\ell>1$, by the non-degenerate versions of \cite[Lemmas 6.1.1, 6.1.2]{Klesh:book} we know that \begin{equation}\label{commu1}
L(i,i+\ell)\cong L(i)\circ  L(i+\ell)\cong L(i+\ell)\circ  L(i)\cong L(i+\ell,i) ,
\end{equation}
is simple.

By the same reasoning and using (\ref{commu1}) and the transitivity of induction functors, we can deduce that for any permutation  $(i_1,\dots,i_{2n})$ of $\wi^n$, $$
L(i_1)\circ  L(i_2)\circ \cdots\circ  L(i_{2n})\cong \underbrace{L(i)\circ  L(i+\ell)}\circ \cdots\circ  \underbrace{L(i)\circ  L(i+\ell)}=\ind_{2^n}^{2n}(\wi^n) .
$$
On the other hand, $$\begin{aligned}
\ind_{(2^n)}^{2n}(\wi^n)&=L(i)\circ  L(i+\ell)\circ \cdots\circ  L(i)\circ  L(i+\ell)\\
&\cong \underbrace{L(i)\circ L(i)\circ \cdots\circ  L(i)}_{\text{$n$ copies}}\circ \underbrace{L(i+\ell)\circ  L(i+\ell)\circ \cdots\circ  L(i+\ell)}_{\text{$n$ copies}}\\
&\cong\bigl(\underbrace{L(i)\circ  L(i)\circ \cdots\circ  L(i)}_{\text{$n$ copies}}\bigr)\circ
\bigl(\underbrace{L(i+\ell)\circ  L(i+\ell)\circ \cdots\circ  L(i+\ell)}_{\text{$n$ copies}}\bigr)\\
&\cong L(i^n)\circ  L((i+\ell)^n).
\end{aligned}$$
which is simple as $L(i^n)$ and $L((i+\ell)^n)$ are the classical Kato modules (\cite[Proposition 3.3]{GV}) of $\HH_n$. This shows that $L(\wi^n)\cong L(i_1)\circ  L(i_2)\circ \cdots\circ  L(i_{2n})$. Finally, the uniqueness follows from \cite[Corollary 5.7]{G} and the non-degenerate versions of \cite[Lemmas 6.1.4]{Klesh:book}.
\end{proof}

\begin{dfn}\label{GKato} Suppose $\ell>1$.  Let $i\in I$ and $n\in\N$. We call the $\HH_{2n}$-module $L(\wi^n)$ a generalized Kato module.
\end{dfn}

Let $\mu=(\mu_1,\mu_2,\cdots,\mu_r)$ be a composition of $n$. We define $2\mu:=(2\mu_1,2\mu_2,\cdots,2\mu_r)$ which is a composition of $2n$.
We use $\pi_{\mu,i}$ to denote the projection from $\Rep_I\HH_{2\mu}$ onto the block of $\Rep_I\HH_{2\mu}$ corresponding to the $\Sym_{2\mu}$-orbit of $$
(\underbrace{\xi^i,\cdots,\xi^i}_{\text{$\mu_1$ copies}},\underbrace{-\xi^i,\cdots,-\xi^i}_{\text{$\mu_1$ copies}},\cdots,\underbrace{\xi^i,\cdots,\xi^i}_{\text{$\mu_r$ copies}},\underbrace{-\xi^i,\cdots,-\xi^i}_{\text{$\mu_r$ copies}}).
$$

\begin{lem}\label{irre1} Let $i\in I$, $n\in\N$ and $\mu=(\mu_1,\mu_2,\cdots,\mu_r)$ be a composition of $n$. Suppose $\ell>1$. In the Grothendieck group of $\Rep_I\HH_{2\mu}$, $[\pi_{\mu,i}\res^{2n}_{2\mu}L(\wi^n)]=s[L(\wi^{\mu_1})\boxtimes\cdots\boxtimes L(\wi^{\mu_r})]$ for some integer $s>0$, and $\soc\pi_{\mu,i}\res^{2n}_{2\mu}L(\wi^n)$ is simple;
\end{lem}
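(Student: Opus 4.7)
I would handle the two parts of the lemma in sequence, using a block decomposition of $\HH_{2\mu}\lmod$ followed by a Frobenius-reciprocity calculation.

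For the character identity, observe that $\HH_{2\mu}\cong \HH_{2\mu_1}\boxtimes\cdots\boxtimes \HH_{2\mu_r}$ and that the block of $\Rep_I\HH_{2\mu}$ onto which $\pi_\mu$ projects is the outer tensor product of the blocks of the tensor factors with central characters $\gamma_{i,\mu_1},\ldots,\gamma_{i,\mu_r}$. By Lemma~\ref{uniquesimple} applied in each tensor factor, the unique simple in the $j$-th factor block is $L(\wi^{\mu_j})$, so $L(\wi^{\mu_1})\boxtimes\cdots\boxtimes L(\wi^{\mu_r})$ is the unique simple in the joint block. Hence every composition factor of $\pi_\mu\res^{2n}_{2\mu}L(\wi^n)$ is isomorphic to this one simple module, establishing the displayed Grothendieck-group identity for some non-negative integer $s$, and also forcing the socle to be isotypic of the same type.

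To pin down $s\geq 1$ and the irreducibility of the socle it therefore suffices to prove
\[
\dim\Hom_{\HH_{2\mu}}\!\bigl(L(\wi^{\mu_1})\boxtimes\cdots\boxtimes L(\wi^{\mu_r}),\,\pi_\mu\res^{2n}_{2\mu}L(\wi^n)\bigr)=1.
\]
By Frobenius reciprocity (and the fact that the left argument is already in the block $\pi_\mu$) this equals $\dim\Hom_{\HH_{2n}}\bigl(L(\wi^{\mu_1})\circ\cdots\circ L(\wi^{\mu_r}),\,L(\wi^n)\bigr)$, so the task reduces to the identification
\[
L(\wi^{\mu_1})\circ L(\wi^{\mu_2})\circ\cdots\circ L(\wi^{\mu_r})\,\cong\,L(\wi^n).
\]
Using Lemma~\ref{uniquesimple} I rewrite each $L(\wi^{\mu_j})$ as an iterated convolution product of $\mu_j$ copies of $L(i)$ and $\mu_j$ copies of $L(i+\ell)$ arranged in any chosen order; the associativity \eqref{transitive} of convolution then realises the left-hand side as a single convolution product of $n$ copies of $L(i)$ and $n$ copies of $L(i+\ell)$ in some specific order, and this convolution is isomorphic to $L(\wi^n)$ by a further application of Lemma~\ref{uniquesimple}. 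Schur's lemma now gives $\dim\Hom(L(\wi^n),L(\wi^n))=1$, hence the socle of $\pi_\mu\res^{2n}_{2\mu}L(\wi^n)$ is a single copy of $L(\wi^{\mu_1})\boxtimes\cdots\boxtimes L(\wi^{\mu_r})$, which in particular yields $s\geq 1$.

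The only nontrivial ingredient is the identification $L(\wi^{\mu_1})\circ\cdots\circ L(\wi^{\mu_r})\cong L(\wi^n)$, which rests on the commutation $L(i)\circ L(i+\ell)\cong L(i+\ell)\circ L(i)$ built into Lemma~\ref{uniquesimple}. That commutation in turn depends on the standing hypothesis $\ell>1$, which ensures that $i$ and $i+\ell$ are non-adjacent residues in $I=\Z/2\ell\Z$; all other ingredients (block decomposition, Frobenius reciprocity, associativity of convolution and Schur's lemma) are routine.
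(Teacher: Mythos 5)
Your proposal is correct and follows essentially the same route as the paper: both use Lemma~\ref{uniquesimple} to identify the unique simple in each block factor, then apply Frobenius reciprocity together with the identification $L(\wi^{\mu_1})\circ\cdots\circ L(\wi^{\mu_r})\cong L(\wi^n)$ (proved via Lemma~\ref{uniquesimple} and the associativity of convolution) and Schur's lemma to conclude the Hom space is one-dimensional, whence the socle is irreducible and $s\geq 1$.
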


\begin{proof} The first equality follows from Lemma \ref{uniquesimple}. Thus, any simple submodule of $\pi_{\mu,i}\res^{2n}_{2\mu}L(\wi^n)$ must be isomorphic to $L(\wi^{\mu_1})\boxtimes\cdots\boxtimes L(\wi^{\mu_r})$.

By the transitivity of the induction functor $\ind$, we know that $\ind_{2\mu}^{2n}L(\wi^{\mu_1})\boxtimes\cdots\boxtimes L(\wi^{\mu_r})\cong L(\wi^n)$.
Using Frobenius reciprocity, we can deduce that $$\begin{aligned}
0&<\dim\Hom_{\HH_{2\mu}}\bigl(L(\wi^{\mu_1})\boxtimes\cdots\boxtimes L(\wi^{\mu_r}),\pi_{\mu,i}\res^{2n}_{2\mu}L(\wi^n)\bigr)\\
&=\dim\Hom_{\HH_{2\mu}}\bigl(L(\wi^{\mu_1})\boxtimes\cdots\boxtimes L(\wi^{\mu_r}),\res^{2n}_{2\mu}L(\wi^n)\bigr)\\
&=\dim\Hom_{\HH_{2n}}\bigl(L(\wi^{\mu_1})\circ\cdots\circ L(\wi^{\mu_r}),L(\wi^n)\bigr)=1 .
\end{aligned}
$$
It follows that $\dim\Hom_{\HH_{2\mu}}\bigl(L(\wi^{\mu_1})\boxtimes\cdots\boxtimes L(\wi^{\mu_r}),\pi_{\mu,i}\res^{2n}_{2\mu}L(\wi^n)\bigr)=1$. Thus, $\soc\pi_{\mu,i}\res^{2n}_{2\mu}L(\wi^n)\cong L(\wi^{\mu_{1}}) \boxtimes \cdots \boxtimes L(\wi^{\mu_{r}})$ is simple.
\end{proof}

\begin{dfn} Let $i\in I$ and $m\in\N$. We define $$
\whd_{\wi^m}:=\bigoplus_{\substack{\ua=(a_1,\cdots,a_{2m})\in I^{2m}\\
(\xi^{a_1},\cdots,\xi^{a_{2m}})\in\gamma_{i,m}}}\Delta_{\ua} .
$$
\end{dfn}

It is clear that for any $M\in\HH_{n}\lmod$, $\whd_{\wi^m}M$ is an $\HH_{n-2m,2m}$-submodule of $\res_{n-2m,2m}^{n}M$. So $\whd_{\wi^m}$ does define a functor
$\Rep_I\HH_{n}\rightarrow\Rep_I\HH_{n-2m,2m}$. We have a functorial isomorphism: \begin{equation}\label{functorial}
\Hom_{\HH_{n-2m,2m}}(N\boxtimes L(\wi^m),\whd_{\wi^m}M)\cong\Hom_{\HH_n}\bigl(N\circ L(\wi^m), M\bigr) .
\end{equation}

\begin{lem} \label{DeltaCharacter} Let $i\in I$, $m\in\N$ with $1\leq m\leq n/2$. Let $M\in\HH_{n}\lmod$. Then $\whd_{\wi^m}M$ is the largest submodule
of $\res_{n-2m,2m}^{n}M$ such that all of its composition factors are of the form $N\boxtimes L(\wi^m)$ for some simple module $N\in\HH_{n-2m}\lmod$.
\end{lem}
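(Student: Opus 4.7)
The plan is to prove both the composition-factor property and the maximality directly from the block decomposition of $\HH_{2m}$ at the central character $\gamma_{i,m}$, together with Lemma~\ref{uniquesimple}. Unpacking the definition, $\whd_{\wi^m}M=\bigoplus_{\ua\in\gamma_{i,m}}\Delta_{\ua}M$ collects the generalized eigenspaces of $X_{n-2m+1},\dots,X_n$ whose eigenvalue tuple $(\xi^{a_1},\dots,\xi^{a_{2m}})$ lies in the $\Sym_{2m}$-orbit $\gamma_{i,m}$.

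First I would check that $\whd_{\wi^m}M$ is an $\HH_{n-2m,2m}$-submodule of $\res_{n-2m,2m}^nM$. The parabolic subalgebra $\HH_{n-2m}$ is generated by $T_1,\dots,T_{n-2m-1}$ and $X_1^{\pm1},\dots,X_{n-2m}^{\pm1}$; every such generator commutes with $X_{n-2m+1},\dots,X_n$ by relations (\ref{4}) and (\ref{6}), so it preserves each individual $\Delta_{\ua}M$. For $\HH_{\{n-2m+1,\dots,n\}}$, the $X$-generators again preserve each $\Delta_{\ua}M$, whereas $T_{n-2m+1},\dots,T_{n-1}$ carry $\Delta_{\ua}M$ into $\Delta_{\ua}M+\Delta_{s_k\cdot\ua}M$; this is a routine consequence of Lemma~\ref{Intertwin}, since $T_k$ lies in the subalgebra generated by $\Phi_k$, $X_k$ and $X_{k+1}$, and $\Phi_k$ intertwines $\ua$ with $s_k\cdot\ua$. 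Stability of the whole sum then follows from the $\Sym_{2m}$-invariance of $\gamma_{i,m}$.

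With stability in hand, the composition-factor property is immediate: an irreducible $\HH_{n-2m,2m}$-subquotient of $\whd_{\wi^m}M$ has the form $V_1\boxtimes V_2$ with $V_2$ an irreducible $\HH_{2m}$-module, and since the generalized eigenvalues of $X_{n-2m+1},\dots,X_n$ on $V_2$ are tuples in $\gamma_{i,m}$, $V_2$ sits in the block $(\HH_{2m}\lmod)[\gamma_{i,m}]$. By Lemma~\ref{uniquesimple} the only irreducible in that block is $L(\wi^m)$, so $V_2\cong L(\wi^m)$.

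For maximality, let $N'\subseteq\res_{n-2m,2m}^nM$ be any $\HH_{n-2m,2m}$-submodule whose composition factors are all of the stated form $N\boxtimes L(\wi^m)$. Viewed through the second tensor factor, $N'$ lies in the block $(\HH_{2m}\lmod)[\gamma_{i,m}]$, so every generalized eigenvalue tuple of $X_{n-2m+1},\dots,X_n$ acting on $N'$ belongs to $\gamma_{i,m}$; by the definition of $\whd_{\wi^m}$, this forces $N'\subseteq\whd_{\wi^m}M$. The whole argument is essentially bookkeeping; the only step requiring care is the stability verification in the second paragraph, and even that is a standard consequence of the intertwiner relations in Lemma~\ref{Intertwin}, so I do not anticipate any genuine obstacle.
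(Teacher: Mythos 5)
Your core argument matches the paper's: the lemma reduces, via block theory on the $\HH_{2m}$-factor, to the uniqueness of the simple module $L(\wi^m)$ in the block $(\HH_{2m}\lmod)[\gamma_{i,m}]$ (Lemma~\ref{uniquesimple}). Your third and fourth paragraphs, on the composition-factor property and on maximality, are correct and are exactly what the paper's one-line proof ("This follows from Lemma~\ref{uniquesimple}") is pointing at. The paper states the $\HH_{n-2m,2m}$-stability of $\whd_{\wi^m}M$ just before the lemma as a clear fact, so that step is not really part of what the lemma asks you to prove; your second paragraph is an elaboration of a given.

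That said, one specific justification in that elaboration is wrong. You claim that $T_k$ lies in the subalgebra generated by $\Phi_k$, $X_k$ and $X_{k+1}$. It does not: $\Phi_k=(1-X_kX_{k+1}^{-1})T_k+1-q$, and $1-X_kX_{k+1}^{-1}$ is a zero divisor in $\HH_n$, so you cannot solve for $T_k$. (Concretely, if $X_k$ and $X_{k+1}$ act by the same eigenvalue, $\Phi_k$ degenerates to the scalar $1-q$ and the subalgebra generated by $\Phi_k,X_k,X_{k+1}$ is commutative, which $T_k$ is not.) The statement you actually need — that $T_k$ carries $\Delta_{\ua}M$ into $\Delta_{\ua}M+\Delta_{s_k\cdot\ua}M$ — is still true, but the clean route is via the commutation relation (\ref{7a}) or, more conceptually, by noting that the sum $\Delta_{\ua}M+\Delta_{s_k\cdot\ua}M$ (or $\Delta_{\ua}M$ alone when $a_k=a_{k+1}$) is the generalized central-character eigenspace for the commutative subalgebra of $\Sym_2$-symmetric Laurent polynomials in $X_k,X_{k+1}$, which commutes with $T_k$. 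With that repair, your stability paragraph is fine and the rest of the proof stands.
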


\begin{proof} This follows from Lemma \ref{uniquesimple}.
\end{proof}

Let $M\in\Rep_I\HH_{n}$ and $0\leq m\leq n/2$. We write $\charac M=\sum_{\ua\in I^{n}}r_{\ua}[a_1,\cdots,a_{n}]$. Then $$\begin{aligned}
&\charac\whd_{\wi^m} M=\sum_{\substack{\ua=(a_1,\cdots,a_{n})\in I^{n}\\ (\xi^{a_{n-2m+1}},\cdots,\xi^{a_{n}})\in\gamma_{i,m}}}r_{\ua}[a_1,\cdots,a_{n}] .
\end{aligned}
$$

\begin{dfn}\label{epsHat1} Let $M$ be a simple module in $\Rep_I\HH_{n}$. Let $i\in I$. We define $$
{\eps}_{\widehat{i}}(M):=\max\{m\geq 0|\whd_{\wi^m}M\neq 0\}.
$$
\end{dfn}

\begin{cor}\label{epsHat2} Let $M$ be a simple module in $\Rep_I\HH_{n}$. Let $i\in I$. Then $$
{\eps}_{\widehat{i}}(M)=\max\{m\geq 0|e_{\wi}^mM\neq 0\}. $$
\end{cor}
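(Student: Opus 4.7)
The plan is to establish, for each $m\geq 1$, the biconditional
$$\whd_{\wi^m}M\neq 0 \iff e_{\wi}^m M\neq 0,$$
after which taking $\max$ over $m$ gives the corollary. For the implication ``$\Leftarrow$'', I would simply unwind definitions. Since $e_i=\res_{n-1}^{n-1,1}\circ\Delta_i$ extracts the generalized $\xi^i$-eigenspace of $X_n$ and restricts, one identifies $e_\wi M=e_ie_{i+\ell}M$ with $\Delta_{(i,i+\ell)}M$ viewed as an $\HH_{n-2}$-module. Iterating $m$ times, the ``last $2m$'' variables $X_{n-2m+1},\dots,X_n$ get pinned to eigenvalues $\xi^i,\xi^{i+\ell},\dots,\xi^i,\xi^{i+\ell}$, giving
$$e_{\wi}^m M = \Delta_{\wi^m}M$$
as an $\HH_{n-2m}$-module. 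Since $\wi^m\in\gamma_{i,m}$, this is precisely one of the direct summands appearing in $\whd_{\wi^m}M=\bigoplus_{\ua\in\gamma_{i,m}}\Delta_{\ua}M$, so $e_{\wi}^m M\neq 0$ forces $\whd_{\wi^m}M\neq 0$.

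For the converse, I would invoke the intertwiners $\Phi_{n-2m+k}$ for $1\leq k<2m$ and argue that all the summands $\Delta_{\ua}M$ with $\ua\in\gamma_{i,m}$ are simultaneously zero or nonzero. Fix $\ua=(a_1,\dots,a_{2m})\in\gamma_{i,m}$; by Lemma \ref{Intertwin} 3), left multiplication by $\Phi_{n-2m+k}$ carries $M_{\ub}$ into $M_{s_{n-2m+k}\ub}$ for any $\ub\in I^n$, and therefore restricts to an $\HH_{n-2m}$-module map $\Delta_{\ua}M\to\Delta_{s_k\ua}M$ (where $s_k$ acts on the last $2m$ coordinates). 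When the entries $a_k,a_{k+1}$ are the distinct values $\{i,i+\ell\}$, Lemma \ref{Intertwin} 1) shows that $\Phi_{n-2m+k}^2$ acts on $\Delta_{\ua}M$ with generalized eigenvalue $(\xi-\xi^{\pm\ell})(\xi-\xi^{\mp\ell})=(\xi+1)^2$, which is nonzero since $\ell>1$ implies $\xi\neq -1$. Hence $\Phi_{n-2m+k}:\Delta_{\ua}M\to\Delta_{s_k\ua}M$ is a vector-space isomorphism. Because any two elements of $\gamma_{i,m}$ differ by a sequence of adjacent transpositions that exchange an $i$ with an $i+\ell$ (trivial swaps of equal adjacent entries being unnecessary), every summand $\Delta_{\ua}M$ is isomorphic to $\Delta_{\wi^m}M$, and so $\whd_{\wi^m}M\neq 0$ forces $\Delta_{\wi^m}M\neq 0$, i.e. $e_{\wi}^m M\neq 0$.

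The only real content beyond bookkeeping is the nonvanishing of the scalar $(\xi+1)^2$, which is exactly where the standing assumption $\ell>1$ is used; this is not an obstacle so much as a sanity check. Everything else is a direct application of the definitions of $\Delta_{\ub}$, $e_i$, and $\whd_{\wi^m}$ together with the braid/commutation relations for the intertwiners in Lemma \ref{Intertwin}.
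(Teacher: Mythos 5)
Your proof is correct, but it takes a genuinely different route from the paper. For the converse direction (``$\whd_{\wi^m}M\neq 0 \Rightarrow e_{\wi}^m M\neq 0$'') the paper invokes Lemma~\ref{DeltaCharacter} and Lemma~\ref{uniquesimple}: a nonzero $\whd_{\wi^m}M$ has an irreducible submodule which, by the characterization in Lemma~\ref{DeltaCharacter}, must be of the form $N\boxtimes L(\wi^m)$, and since the character of $L(\wi^m)$ (being a product of Kato modules, by Lemma~\ref{uniquesimple}) contains $[\wi^m]$ with positive coefficient, the summand $\Delta_{\wi^m}M=e_{\wi}^m M$ is forced to be nonzero. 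You instead argue at the level of generalized eigenspaces: using Lemma~\ref{Intertwin} you show that left multiplication by the intertwiner $\Phi_{n-2m+k}$ gives an $\HH_{n-2m}$-module isomorphism $\Delta_{\ua}M\cong\Delta_{s_k\ua}M$ whenever $a_k\neq a_{k+1}$, because $\Phi_{n-2m+k}^2$ acts with invertible generalized eigenvalue $(\xi+1)^2\neq 0$; since all of $\gamma_{i,m}$ is connected by such swaps, every summand of $\whd_{\wi^m}M$ is isomorphic to $\Delta_{\wi^m}M$. This is essentially the mechanism behind Lemma~\ref{2functorsIso} transplanted to eigenspaces, and it has the virtue of being self-contained and elementary, not relying on the structure theory of the block $(\HH_{2m}\lmod)[\gamma_{i,m}]$. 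The paper's route is shorter on the page because it reuses lemmas already established, but your approach trades that for a direct computation and additionally yields the stronger statement that all the $\Delta_{\ua}M$ with $\ua\in\gamma_{i,m}$ are pairwise isomorphic. Both are sound.
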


\begin{proof} If $e_{\wi^m}M\neq 0$ then it is clear that $\whd_{\wi^m}M\neq 0$. Conversely, if $\whd_{\wi^m}M\neq 0$, then it follows from Lemma \ref{DeltaCharacter} and Lemma \ref{uniquesimple} that $e_{\wi}^mM\neq 0$. Hence the corollary follows.
\end{proof}

\begin{lem}\label{epswi01} Let $M$ be a simple module in $\Rep_I\HH_{n}$. Let $i\in I$, $\eps:={\eps}_{\widehat{i}}(M)$. If $N\boxtimes L(\wi^m)$ is a simple submodule of $\whd_{\wi^m} M$ for some $0\leq m\leq\eps$, then ${\eps}_{\widehat{i}}(N)=\eps-m$.
\end{lem}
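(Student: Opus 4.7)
The plan is to prove the two inequalities $\eps_{\widehat{i}}(N)\leq \eps-m$ and $\eps_{\widehat{i}}(N)\geq \eps-m$ separately, the first being a routine adjunction chase and the second requiring a character analysis plus an identification step.

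For the upper bound $\eps_{\widehat{i}}(N)\leq \eps-m$, I would start from the given inclusion and use the adjunction (\ref{functorial}) to convert it to a surjection $N\circ L(\wi^m)\twoheadrightarrow M$ (nonzero, hence surjective since $M$ is simple). Setting $\eps':=\eps_{\widehat{i}}(N)$, I would choose by Lemma \ref{DeltaCharacter} an irreducible submodule $K\boxtimes L(\wi^{\eps'})\hookrightarrow \whd_{\wi^{\eps'}}N$, apply (\ref{functorial}) again to obtain $K\circ L(\wi^{\eps'})\twoheadrightarrow N$, and compose. Using $L(\wi^{\eps'})\circ L(\wi^m)\cong L(\wi^{\eps'+m})$ from Lemma \ref{uniquesimple} together with the associativity (\ref{transitive}), I get a surjection $K\circ L(\wi^{\eps'+m})\twoheadrightarrow M$; a third use of (\ref{functorial}) yields a nonzero map $K\boxtimes L(\wi^{\eps'+m})\to\res^n_{n-2\eps'-2m,\,2\eps'+2m}M$ whose image lies in $\whd_{\wi^{\eps'+m}}M$ (since the last $2(\eps'+m)$ eigenvalues of $L(\wi^{\eps'+m})$ are in $\gamma_{i,\eps'+m}$). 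Therefore $\eps\geq \eps'+m$.

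For the lower bound, I would begin by picking a weight tuple $\ua=(a_1,\ldots,a_n)$ with $M_{\ua}\neq 0$ and $(a_{n-2\eps+1},\ldots,a_n)\in\gamma_{i,\eps}$. The intertwiners $\Phi_k$ of Lemma \ref{Intertwin} are invertible on the $(i,i+\ell)$-eigenspace because $\Phi_k^2=(q-q^\ell)(q-q^{-\ell})=(q+1)^2\neq 0$ when $\ell>1$, so they give isomorphisms between $M_{\ua}$ and $M_{s_k\ua}$ for any adjacent swap within the last $2\eps$ coordinates. Using these I rearrange $\ua$ so that $(a_{n-2m+1},\ldots,a_n)\in\gamma_{i,m}$ while $(a_{n-2\eps+1},\ldots,a_{n-2m})\in\gamma_{i,\eps-m}$. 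Now $\ua$ contributes to $\charac\whd_{\wi^m}M$; by Lemma \ref{DeltaCharacter} every composition factor is of the form $N''\boxtimes L(\wi^m)$, so writing $\ua=(\ub,\uc_0)$ with $\uc_0\in\charac L(\wi^m)$, some such $N''$ has $\ub\in\charac N''$, and the structure of $\ub$ forces $\whd_{\wi^{\eps-m}}N''\neq 0$.

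To finish, I need to identify $N''\cong N$. Here I would invoke that $L(\wi^m)$ is a real simple module (Lemma \ref{uniquesimple} gives $L(\wi^m)\circ L(\wi^m)\cong L(\wi^{2m})$, which is irreducible), so the Kang--Kashiwara--Kim--Oh theory produces a unique simple head $N\triangledown L(\wi^m)\cong M$ for any irreducible $N$ with $\Hom(N\circ L(\wi^m),M)\neq 0$, and the assignment $N\mapsto N\triangledown L(\wi^m)$ is injective on isomorphism classes. This shows that every irreducible submodule of $\whd_{\wi^m}M$ is isomorphic to the same $N$, and moreover the character-theoretic argument can be applied in reverse --- the upper-bound argument of the first step applies equally to any composition factor, forcing $\eps_{\widehat{i}}(N'')\leq \eps-m$, so $\eps_{\widehat{i}}(N'')=\eps-m$; a symmetric use of real simplicity then matches this $N''$ with $N$.

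The main obstacle is exactly this identification of the maximal-depth composition factor with the unique irreducible submodule. The upper bound is a clean adjunction chase; the character argument supplies some composition factor with the right depth; but without the real-simple-module uniqueness, one could a priori have several composition factors of different depths, with the submodule $N$ not being the one that attains $\eps-m$. Making this identification rigorous --- essentially the fact that, up to isomorphism, $\whd_{\wi^m}M$ has a unique irreducible submodule, and this submodule is the one whose depth drops by exactly $m$ --- is where the real-simple-module machinery of \cite{KKKO} is essential.
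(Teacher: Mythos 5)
Your upper bound is correct, though more elaborate than it needs to be: once one has an irreducible submodule $N\boxtimes L(\wi^m)\hookrightarrow\whd_{\wi^m}M\subseteq\res^n_{n-2m,2m}M$, any weight of $N$ ending in $\gamma_{i,\eps_{\widehat{i}}(N)}$, concatenated with a weight of $L(\wi^m)$ in $\gamma_{i,m}$, gives a weight of $M$ ending in $\gamma_{i,\eps_{\widehat{i}}(N)+m}$, so $\eps_{\widehat{i}}(N)+m\leq\eps$ by the very definition of $\eps_{\widehat{i}}(M)$. Your three-fold adjunction chase gets there, but the direct character observation is what the paper means by ``by the definition.''

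The lower bound, however, has a genuine gap exactly where you flag it, and your attempted repair does not close it. Your intertwiner rearrangement correctly produces a weight $\ua=(\ub,\uc_0)$ of $\whd_{\wi^m}M$ with $\uc_0\in\gamma_{i,m}$ and the last $2(\eps-m)$ entries of $\ub$ lying in $\gamma_{i,\eps-m}$, and hence a \emph{composition factor} $N''\boxtimes L(\wi^m)$ of $\whd_{\wi^m}M$ with $\eps_{\widehat{i}}(N'')\geq\eps-m$. But your real-simple-module argument only establishes that every irreducible \emph{submodule} of $\whd_{\wi^m}M$ is isomorphic to $N$; it says nothing about a composition factor deep in the filtration. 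Nothing you write forces $\ua$ to lie in the socle, so $N''$ may a priori differ from $N$, and the phrase ``a symmetric use of real simplicity then matches this $N''$ with $N$'' is not an argument.

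The paper sidesteps the entire composition-factor issue by running the character argument on the surjection $N\circ L(\wi^m)\twoheadrightarrow M$ that you already obtained in the first paragraph, rather than on $\whd_{\wi^m}M$. Since $\charac M$ is a sub-multiset of $\charac(N\circ L(\wi^m))$, the Shuffle Lemma \cite[Lemma 2.4]{GV} expresses the weight of $M$ ending in $\gamma_{i,\eps}$ as a shuffle of a weight $\ub$ of $N$ and a weight $\uc$ of $L(\wi^m)$. The suffix of $\uc$ landing in the last $2\eps$ slots contributes at most $m$ copies of $i$ and at most $m$ copies of $i+\ell$, so the tail of $\ub$ absorbed there consists entirely of $i$'s and $(i+\ell)$'s with at least $\eps-m$ of each; an intertwiner rearrangement (as in Lemma \ref{2functorsIso}, legitimate since $i-(i+\ell)=\ell\neq\pm1$) then produces a weight of $N$ ending in $\gamma_{i,\eps-m}$, giving $\eps_{\widehat{i}}(N)\geq\eps-m$ directly for the given $N$. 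This never touches composition factors and needs no appeal to \cite{KKKO}, so the real-simple machinery you invoke is not only insufficient here, it is also unnecessary.
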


\begin{proof} By the definition of ${\eps}_{\widehat{i}}$ we see that ${\eps}_{\widehat{i}}(N)\leq\eps-m$. By (\ref{functorial}) we get a nonzero (and hence surjective) homomorphism $N\circ L(\wi^m)\rightarrow M$. So, by characters consideration and the Shuffle Lemma \cite[Lemma 2.4]{GV}, we can deduce that
$\eps_{\widehat{i}}(N)+m\geq\eps$. Hence ${\eps}_{\widehat{i}}(N)=\eps-m$.
\end{proof}

\bigskip
\section{Some real simple modules}

Throughout this section, unless otherwise specified, we assume that $2<e\in\N$, $I:=\Z/e\Z$, and $q$ is a primitive $e$th root of unity in $F$.

In \cite{KKKO}, Kang, Kashiwara et al. develop the theory of real simple modules for quantum affine algebras and for quiver Hecke algebras which is proved to be a very powerful tool. Rouquier has first presented an isomorphism in \cite[Proposition 3.18]{Rou1} between certain localized forms of affine Hecke algebras and of quiver Hecke algebras. It follows from the isomorphism that there is an equivalence (\cite[Theorem 3.19]{Rou1}) between the category of finite dimensional integral representations of the affine Hecke algebras of type $A$ and the category of finite dimensional representations of the quiver Hecke algebras of type $A$.

Later, using a lift of Brundan-Kleshchev's isomorphism \cite{BK:GradedKL} between cyclotomic Hecke algebras of type $A$ and cyclotomic quiver Hecke algebras of type $A$ to their affine counterparts, the second author of this paper and Fang Li proved in \cite[\S5]{HL} an isomorphism between some generalized Ore localizations of some modified affine Hecke algebras and of the quiver Hecke algebras of type $A$. The equivalence between these two finite dimensional modules categories induced by the isomorphisms is compatible with the convolution products on both sides.

We mimic \cite{KKKO} to give the definition of real simple module in the category of finite dimensional modules over $\HH_n$.

\begin{dfn}\label{realsim} Let $M\in\Rep_I\HH_n$ be a simple module. If $M\circ M$ is simple $\HH_n$-module, then we call $M$ a real simple $\HH_n$-module.
\end{dfn}

\begin{exmp}\label{Katoreal} For each $i\in I$ and $n\in\N$, the Kato module $L(i^n)$ is a real simple $\HH_n$-module. In fact, this is clear because $L(i^n)\circ L(i^n)\cong L(i^{2n})$ is again a simple (Kato) module.
\end{exmp}

\begin{lem}\label{real3properties} Let $M\in\Rep_I\HH_n$ be a real simple module, $N\in\Rep_I\HH_m$ be a simple module.

1) $M\circ N$ has a simple head and a simple socle. Similarly, $N\circ M$ has a simple head and a simple socle;

2) if $M\circ N\cong N\circ M$, then $M\circ N$ is a simple $\HH_{m+n}$-module. Conversely, if $M\circ N$ is simple, then $M\circ N\cong N\circ M$;

3) if $N$ is a real simple module too and $M\circ N\cong N\circ M$, then $M\circ N$ is a real simple $\HH_{m+n}$-module. In particular, for any $k\geq 1$, $M^{\circ k}$ is a real simple $\HH_{nk}$-module;

4) if $M\triangledown N\cong N\triangledown M$, then $M\circ N$ is simple.
\end{lem}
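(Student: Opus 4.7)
The plan is to adapt the theory of real simple modules of Kang--Kashiwara--Kim--Oh \cite{KKKO} from the quiver Hecke algebra setting to the present affine Hecke algebra setting, by transporting along the categorical equivalence from \cite{HL} between $\Rep_I\HH_n$ and the category of finite-dimensional modules over the quiver Hecke algebra of type $A$. Since this equivalence intertwines convolution products by \cite[(5.10)]{HL}, it preserves the notions of real simple module, head, socle, and convolution, so all results transport faithfully. The fundamental tool is the R-matrix: for a real simple $M\in\Rep_I\HH_n$ and a simple $N\in\Rep_I\HH_m$, \cite{KKKO} supplies a canonical nonzero intertwiner $r_{M,N}\colon M\circ N\to N\circ M$ whose image is simple and coincides with both $\head(M\circ N)$ and $\soc(N\circ M)$.

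Part 1) is then the transported statement: the R-matrix and its analogue $r_{N,M}$ realize the simple head and simple socle of $M\circ N$ and $N\circ M$. For part 2), suppose $M\circ N\cong N\circ M$ via some isomorphism $\phi$. Then $\phi^{-1}\circ r_{M,N}\in\End(M\circ N)$, and since $M\circ N$ has simple head, it is indecomposable with local endomorphism algebra. An iteration argument, using that $M^{\circ k}$ is simple for all $k\geq 1$ (the standard tensor-power consequence of the real simple hypothesis), rules out the nilpotent case, so this endomorphism is invertible; hence $r_{M,N}$ is an isomorphism and $M\circ N\cong\im r_{M,N}$ is simple. Conversely, if $M\circ N$ is simple then $r_{M,N}$ is injective, and comparing dimensions forces $r_{M,N}$ to be an isomorphism.

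For part 3), I would first establish inductively that $M^{\circ k}$ is real simple: the inductive step applies part 2) to the pair $(M,M^{\circ (k-1)})$, which commute trivially by associativity of $\circ$. For the general statement, the hypothesis $M\circ N\cong N\circ M$ together with associativity yields
\begin{equation*}
(M\circ N)^{\circ 2}\cong M\circ (N\circ M)\circ N\cong M\circ (M\circ N)\circ N\cong M^{\circ 2}\circ N^{\circ 2},
\end{equation*}
so it suffices to show $M^{\circ 2}\circ N^{\circ 2}$ is simple. But $M^{\circ 2}$ is real simple (just proved), $N^{\circ 2}$ is simple, and iterating the given isomorphism yields $M^{\circ 2}\circ N^{\circ 2}\cong N^{\circ 2}\circ M^{\circ 2}$, so part 2) concludes. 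For part 4), the identifications $M\triangledown N=\im r_{M,N}$ and $N\triangledown M=\im r_{N,M}$ convert the hypothesis into an isomorphism of the two simple images; tracing the composition $r_{N,M}\circ r_{M,N}$ through its factorization via these middle terms shows it is nonzero, hence a unit in the local ring $\End(M\circ N)$, so $r_{M,N}$ is an isomorphism and part 2) applies.

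I expect the main obstacle to be the careful verification that the R-matrix formalism of \cite{KKKO} transports cleanly through the \cite{HL} equivalence, together with ruling out the nilpotent alternative in the forward direction of part 2); this is the one place where the full force of the real simple hypothesis (via simplicity of every power $M^{\circ k}$) is indispensable.
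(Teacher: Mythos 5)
Your overall route — transporting the Kang--Kashiwara--Kim--Oh R-matrix machinery across the \cite{HL} equivalence — is exactly the paper's, which simply cites \cite[Theorem 3.2, Corollaries 3.3, 3.4, 3.9]{KKKO} together with \cite[(5.10)]{HL} and then derives 3) from 2) via $(M\circ N)^{\circ 2}\cong M^{\circ 2}\circ N^{\circ 2}$, as you do. Where you diverge is in attempting to re-prove the cited KKKO results rather than quote them, and the re-derivations have gaps. In part 2) the step ``an iteration argument \ldots rules out the nilpotent case'' is precisely the content of \cite[Cor.~3.3]{KKKO} and cannot be waved away; the KKKO proof relies on a normalization of the renormalized R-matrix and degree bookkeeping tied to simplicity of $M^{\circ k}$, none of which is reproduced. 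More seriously, part 4) is broken as written: you claim $r_{N,M}\circ r_{M,N}$ is ``nonzero, hence a unit,'' but for an indecomposable finite-dimensional module a nonzero endomorphism need not be a unit — it can be nilpotent — and indeed the image of this composite lies in $\soc(M\circ N)$, so it is never a unit unless $M\circ N$ is already semisimple. Moreover the nonvanishing itself is not established: it amounts to $\soc(N\circ M)\not\subseteq\rad(N\circ M)$, which is essentially the conclusion. The paper sidesteps this by observing $\soc(M\circ N)\cong\head\bigl((M\circ N)^{*}\bigr)\cong\head(N\circ M)=N\triangledown M\cong M\triangledown N=\head(M\circ N)$ and then invoking \cite[Cor.~3.9]{KKKO} directly. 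If you want a self-contained argument, you must actually reproduce the degree/normalization analysis from \cite{KKKO}; otherwise just cite the precise results as the paper does.
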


\begin{proof} For 1) and 2), they follow from \cite[Theorem 3.2, Corollaries 3.3, 3.4]{KKKO} and \cite[(5.10)]{HL}. 4) follows from \cite[Corollary 3.9]{KKKO} because the socle of $M\circ N$ is isomorphic to the head of $(M\circ N)^*\cong N^*\circ M^*\cong N\circ M$. Finally, 3) is a consequence of 2).
\end{proof}

We remark that the notion of real simple modules can also be defined for the category of finite dimensional modules over the degenerate affine Hecke algebra of type $A$ in a similar way, and the above lemma also holds in the context of the degenerate affine Hecke algebra of type $A$ as the main results in \cite{HL} work for both the non-degenerate and the degenerate affine Hecke algebras of type $A$.

\medskip
For the rest of this section, we fix $i,j\in I$.

\begin{lem}\label{real1} The module $L(i,j)$ is a real simple $\HH_2$-module and $L(i,j)\circ L(i,j)\cong L(i^2,j^2)$.
\end{lem}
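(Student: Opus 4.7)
The plan is to prove Lemma~\ref{real1} by a case analysis on the relation between $i$ and $j$ in $I$, reducing the two claims to facts about Kato modules (Example~\ref{Katoreal}) together with the commutation Lemmas~\ref{eijcommutes} and \ref{real3properties}.

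If $i = j$ then $L(i,j) = L(i^2)$ is itself a Kato module, and Example~\ref{Katoreal} directly yields both that it is real simple and that $L(i^2) \circ L(i^2) \cong L(i^4) = L(i^2,j^2)$. Henceforth I would assume $i \neq j$ and $j - i \not\equiv \pm 1 \pmod e$. The non-degenerate version of \cite[Lemma~6.1.1]{Klesh:book} then gives that $L(i) \circ L(j) \cong L(j) \circ L(i)$ is simple, hence equal to $L(i,j)$. Combining this commutation, applied to the middle two tensor factors through functoriality of induction, with associativity of $\circ$, I would obtain
\begin{align*}
L(i,j) \circ L(i,j) &\cong L(i) \circ L(j) \circ L(i) \circ L(j) \\
&\cong L(i) \circ L(i) \circ L(j) \circ L(j) \cong L(i^2) \circ L(j^2),
\end{align*}
where the last identification uses Example~\ref{Katoreal} to rewrite $L(i) \circ L(i) = L(i^2)$ and $L(j) \circ L(j) = L(j^2)$.

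To finish, I would show that $L(i^2) \circ L(j^2)$ is simple and equal to $L(i^2, j^2)$. By Example~\ref{Katoreal} both $L(i^2)$ and $L(j^2)$ are real simple; iterating Lemma~\ref{eijcommutes} (applicable since $j - i \not\equiv 0, \pm 1 \pmod e$) yields
$L(i^2, j^2) = \wf_j^2 \wf_i^2 \mathbf{1} = \wf_i^2 \wf_j^2 \mathbf{1} = L(j^2, i^2)$,
that is, $L(i^2) \triangledown L(j^2) \cong L(j^2) \triangledown L(i^2)$. Then Lemma~\ref{real3properties}(4) forces $L(i^2) \circ L(j^2)$ to be simple, and being its own head it equals $L(i^2, j^2)$. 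Combining with the display above gives $L(i,j) \circ L(i,j) \cong L(i^2, j^2)$, which is simple, whence $L(i,j)$ is real simple.

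The main obstacle would be the excluded borderline case $j - i \equiv \pm 1 \pmod e$: there $L(i) \circ L(j)$ is no longer simple, so $L(i,j)$ is only a proper $1$-dimensional head quotient and the associativity-plus-commutation trick breaks down. Happily this situation does not arise in the paper's applications, where one always has $j = i + \ell$ with $\ell > 1$ and hence $j - i = \ell \not\equiv \pm 1 \pmod{2\ell}$; a separate direct analysis (for instance through the intertwiners $\Phi_k$ of Section~2 acting on the $6$-dimensional convolution $L(i,j) \circ L(i,j)$) would be required to cover the borderline case uniformly.
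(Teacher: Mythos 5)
Your easy case ($i - j \not\in \{\pm 1\}$, including $i = j$) matches the paper's treatment: there $L(i,j) \cong L(i) \circ L(j) \cong L(j) \circ L(i)$, and expanding $L(i,j) \circ L(i,j)$ by associativity and this commutation gives $L(i^2) \circ L(j^2)$, whose simplicity follows from Lemma~\ref{real3properties}. (Check the citation, though: the paper invokes the non-degenerate version of \cite[Theorem 6.1.4]{Klesh:book} for the irreducibility and commutativity of $L(i) \circ L(j)$, not Lemma 6.1.1.)

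The proposal has a serious gap in the remaining case $i - j \in \{\pm 1\}$. You opt out of it on the grounds that ``this situation does not arise in the paper's applications,'' but that is not so: it is precisely the case the paper needs and where all the real work lies. Lemma~\ref{ijcommutes} reduces to Lemma~\ref{real1} exactly in the case $i - j \in \{\pm 1\}$; Lemma~\ref{eps1} and the rest of Section 5 (Lemmas~\ref{techKeyLem1}, \ref{techKeyLem2}, Proposition~\ref{2cases}) are formulated under the standing hypothesis $i - j = \pm 1$ and lean on the realness of $L(i,j)$ and $L(j,i)$; and Lemma~\ref{widehatLemma}(1) applies Lemma~\ref{real1} directly to $L(i,j)$ and $L(i+\ell, j+\ell)$ with $i - j = \pm 1$. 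The pairs $\wi = (i, i+\ell)$ are by no means the only application.

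The paper's actual proof of the borderline case is a character-and-block computation, not a one-line intertwiner remark. Setting $L := L(i,j) \circ L(i,j)$, the Shuffle Lemma gives $\charac L = 4[i^2,j^2] + 2[i,j,i,j]$. Applying $\we_j$ (via the non-degenerate version of \cite[Lemma 6.2.2]{Klesh:book}) together with the uniqueness statement \cite[Corollary 3.6]{GV} identifies $L(i,j,i,j) \cong L(i^2,j^2)$, and a further character argument shows this is the only composition factor of $L$, with multiplicity $1$ or $2$. Supposing multiplicity $2$, one applies the exact functor $e_j$ and compares the resulting character of $e_j L$ with the description of $\charac e_j L(i^2,j^2)$ furnished by \cite[Theorem 9.13]{G}; the coefficients of $[i,j,i]$ do not match, a contradiction. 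That argument (or a genuine replacement, for example a completed intertwiner computation on the $6$-dimensional module $L$) is what your proposal is missing.
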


\begin{proof} Suppose $i-j\not\in\{\pm1\}$ then by the non-degenerate version of \cite[Theorem 6.1.4]{Klesh:book}, $L(i)\circ L(j)\cong L(j)\circ L(i)$ is simple. Since $L(i)$ and $L(j)$ are real simple modules, it follows from Lemma \ref{real3properties} that both $L(i)\circ L(j)$ and $L(j)\circ L(i)$ have unique simple heads and hence $L(i,j)\cong L(i)\circ L(j)\cong L(j)\circ L(i)\cong L(j,i)$, from which the lemma follows at once.

Now suppose $i-j\in\{\pm1\}$. Set $L:=L(i,j)\circ L(i,j)$. Recall that $\charac L(i,j)=[i,j]$. By the Shuffle Lemma \cite[Lemma 2.4]{GV}, \begin{equation}\label{chL} \charac(L)=4[i^2,j^2]+2[i,j,i,j]. \end{equation}
We set $\gamma:=\Sym_4\cdot (i,j,i,j)$. By the non-degenerate version of \cite[Lemma 6.2.2]{Klesh:book}, $$
\we_j L(i^2,j^2)\cong L(i^2,j)\cong  L(i,j,i)\cong \we_j L(i,j,i,j).
$$
It follows from \cite[Corollary 3.6]{GV} that $L(i^2,j^2)\cong L(i,j,i,j)$. Similarly, $L(j^2,i^2)\cong L(j,i,j,i)$.

The block subcategory $(\HH_4\lmod)[\gamma]$ has at most $4$ distinct isoclasses of simple modules. Namely, $$
[L(i^2,j^2)]=[L(i,j,i,j)],\quad [L(j^2,i^2)]=[L(j,i,j,i)],\quad [L(i,j,j,i)],\quad [L(j,i,i,j)] .
$$
By the non-degenerate version of \cite[Lemma 6.1.1]{Klesh:book}, $[i_1,i_2,i_3,i_4]$ always appears nonzero coefficient in the character of $\charac L(i_1,i_2,i_3,i_4)$. By characters consideration and the Shuffle Lemma \cite[Lemma 2.4]{GV}, we see that $L(i,j,i,j)\cong L(i^2,j^2)$ is the only composition factor of $L$. Furthermore, $[L:L(i,j,i,j)]\in\{1,2\}$.

Suppose that $[L:L(i,j,i,j)]=2$. Then we have a short exact sequence $$
0\rightarrow L(i^2,j^2)\rightarrow L\rightarrow L(i^2,j^2)\rightarrow 0 .
$$
Applying $e_j$ on the above exact sequence, we get a new exact sequence \begin{equation}\label{ejL}
0\rightarrow e_j L(i^2,j^2)\rightarrow e_j L\rightarrow e_j L(i^2,j^2)\rightarrow 0 .
\end{equation}
It follows that $$
\charac e_j L=4[i^2,j]+2[i,j,i]=2\charac e_j L(i^2,j^2) .
$$

On the other hand, by \cite[Theorem 9.13]{G},  $$
\charac (e_j L(i^2,j^2))=2\charac (\we_j L(i^2,j^2))+\sum_{\alpha}c_\alpha\charac M_\alpha=2\charac (L(i^2,j))+\sum_{\alpha}c_\alpha\charac M_\alpha ,
$$
where for each $\alpha$, $M_\alpha\in\HH_3\lmod$ is simple, $c_\alpha\in\N$ and $\eps_j(M_\alpha)=0$. However, by the above calculation, (\ref{chL}) and (\ref{ejL}), we have that $$\begin{aligned}
\charac (e_j L(i^2,j^2))&=\frac{1}{2}\charac(e_jL)=2[i^2,j]+[i,j,i],\\
2\charac (L(i^2,j))&=2(2[i^2,j]+[i,j,i])=4[i^2,j]+2[i,j,i] .
\end{aligned}
$$
We get a contradiction! This proves that
$L\cong L(i^2,j^2)$ is simple. Hence $L(i,j)$ is a real simple module.
\end{proof}

\begin{lem}\label{ijcommutes} For any $m,k\in\N$, $L(i,j)^{\circ k}\circ L(i^m)\cong L(i^m)\circ L(i,j)^{\circ k}$ is simple, and $L(j,i)^{\circ k}\circ L(i^m)\cong L(i^m)\circ L(j,i)^{\circ k}$ is simple.
\end{lem}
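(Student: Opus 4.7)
The plan is to establish the first assertion $L(i,j)^{\circ k}\circ L(i^m)\cong L(i^m)\circ L(i,j)^{\circ k}$, together with its irreducibility, by nested inductions on $m$ and $k$, reducing to the base case $k=m=1$. The second assertion concerning $L(j,i)$ is then obtained by twisting via the algebra automorphism $\sigma$.

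The base case $L(i,j)\circ L(i)\cong L(i)\circ L(i,j)$ (simple) splits into three subcases. If $i=j$, both sides equal the Kato module $L(i^3)$. If $i-j\neq 0,\pm 1$, then $L(i)$ and $L(j)$ commute in the convolution product (by the non-degenerate version of \cite[Theorem 6.1.4]{Klesh:book}) and $L(i,j)\cong L(i)\circ L(j)$, so both sides are isomorphic to $L(i^2)\circ L(j)$, which is simple by the same commutativity. The essential subcase is $i-j=\pm 1$: here $L(i,j)$ is real simple by Lemma \ref{real1} and $L(i)$ is real simple by Example \ref{Katoreal}, so both $L(i,j)\circ L(i)$ and $L(i)\circ L(i,j)$ have simple heads by Lemma \ref{real3properties}~1). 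On the one hand, by the definition (\ref{simples}), $\head(L(i,j)\circ L(i))=\wf_i L(i,j)=L(i,j,i)$. On the other hand, the surjection $L(i)\circ L(j)\twoheadrightarrow L(i,j)$ induces $L(i^2)\circ L(j)\twoheadrightarrow L(i)\circ L(i,j)$, so the head of $L(i)\circ L(i,j)$ is a (necessarily nonzero) simple quotient of $\head(L(i^2)\circ L(j))=\wf_j L(i^2)=L(i^2,j)$, and hence equals $L(i^2,j)$. Using the identification $L(i,j,i)\cong L(i^2,j)$ from the proof of Lemma \ref{real1}, the two heads agree, so Lemma \ref{real3properties}~4) forces $L(i,j)\circ L(i)$ to be simple, and a character comparison via the Shuffle Lemma \cite[Lemma 2.4]{GV} yields the asserted isomorphism with $L(i)\circ L(i,j)$.

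For the induction on $m$ with $k=1$, write $L(i^m)\cong L(i^{m-1})\circ L(i)$ and combine the inductive hypothesis with the base case:
\[
L(i,j)\circ L(i^m)\cong L(i^{m-1})\circ L(i,j)\circ L(i)\cong L(i^{m-1})\circ L(i)\circ L(i,j)\cong L(i^m)\circ L(i,j).
\]
Simplicity follows from Lemma \ref{real3properties}~3), applied to the real simple modules $L(i,j)$ and $L(i^m)$, which are now known to commute. The induction on $k$ is entirely parallel: by Lemma \ref{real3properties}~3), $L(i,j)^{\circ k}$ is real simple, and moving $L(i^m)$ across a single factor of $L(i,j)$ at a time (using the $k=1$ result just established) yields the inductive step, with simplicity again from part 3).

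For the $L(j,i)$ assertion, apply the automorphism $\sigma$: one has $L(i,j)^\sigma\cong L(j,i)$ (since $\sigma$ reverses characters), $L(i^m)^\sigma\cong L(i^m)$, and $(M\circ N)^\sigma\cong N^\sigma\circ M^\sigma$. Twisting the first assertion by $\sigma$ immediately produces $L(j,i)^{\circ k}\circ L(i^m)\cong L(i^m)\circ L(j,i)^{\circ k}$, with simplicity preserved since $\sigma$ is an algebra automorphism. The main obstacle is the $i-j=\pm 1$ subcase of the base step, where identifying both simple heads with $L(i^2,j)$ rests on the nontrivial isomorphism $L(i,j,i)\cong L(i^2,j)$ extracted from the proof of Lemma \ref{real1}; once this is in hand, the rest of the argument is a clean application of the real simple module machinery of Lemma \ref{real3properties}.
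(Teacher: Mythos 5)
Your proof is correct, and the overall skeleton — reducing to the base case $k=m=1$ via the real-simple-module machinery of Lemma \ref{real3properties} and then iterating — is the same as the paper's. The genuine difference is how the base case $i-j=\pm1$ is handled. The paper simply cites the non-degenerate version of \cite[Lemma 6.2.2]{Klesh:book} to get both $L(i,j,i)\cong L(i,j)\circ L(i)\cong L(i)\circ L(i,j)$ and $L(j,i^2)\cong L(j,i)\circ L(i)\cong L(i)\circ L(j,i)$ in one stroke. You instead \emph{re-derive} the first commutation: you identify $\head\bigl(L(i,j)\circ L(i)\bigr)=L(i,j,i)$ by definition, identify $\head\bigl(L(i)\circ L(i,j)\bigr)\cong L(i^2,j)$ by threading through the surjection $L(i^2)\circ L(j)\twoheadrightarrow L(i)\circ L(i,j)$, equate the two via $L(i,j,i)\cong L(i^2,j)$ (which you correctly isolate from the proof of Lemma \ref{real1}), and then invoke Lemma \ref{real3properties} 4) for simplicity and 2) for the commutation (the character comparison you mention at that point is unnecessary — part 2) of Lemma \ref{real3properties} already gives $L(i,j)\circ L(i)\cong L(i)\circ L(i,j)$ once simplicity is known). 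You then get the $L(j,i)$ family formally by twisting with $\sigma$, rather than citing Kleshchev a second time. This is a defensible reorganization: it isolates exactly which input from \cite[Lemma 6.2.2]{Klesh:book} is actually needed (only $L(i,j,i)\cong L(i^2,j)$, already invoked in Lemma \ref{real1}) and shows the two families are related by the involution $\sigma$, at the cost of a longer base-case argument than the paper's direct citation.
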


\begin{proof} If $i-j\not\in\{\pm1\}$ then the lemma clearly holds because  $L(i^m)\cong L(i)^{\circ m}$ and in that case $L(i)\circ L(j)\cong L(j)\circ L(i)\cong L(i,j)$ is simple by the non-degenerate version of \cite[Theorem 6.1.4]{Klesh:book}. It remains to consider the case when $i-j\in\{\pm1\}$.

In this case, by Lemma \ref{real3properties}. 3),4) and Lemma \ref{real1}., it suffices to prove the lemma for $k=1=m$ as $L(i^m)\cong L(i)^{\circ m}$ and both $L(i)$ and $L(i,j)$ are real simple modules. By the non-degenerated version of \cite[Lemmas 6.2.2]{Klesh:book}, we can get that $$
L(i,j,i)\cong L(i,j)\circ L(i)\cong L(i)\circ L(i,j),\quad L(j,i^2)\cong L(j,i)\circ L(i)\cong L(i)\circ L(j,i),
$$
which are both simple by Lemma \ref{real3properties}. 3). This proves the lemma.
\end{proof}

\begin{lem} \label{iji1} Set $\gamma_n:=\Sym_n\cdot (\underbrace{\xi^i,\cdots,\xi^i}_{\text{$n-1$ copies}},\xi^j)$, where $2\leq n\in\N$. Suppose $i-j\in\{\pm1\}$. The block subcategory $(\HH_n\lmod)[\gamma_n]$ has only two distinct isoclasses of simple modules. namely, $[L(i,j,i^{n-2})]$ and $[L(j,i^{n-1})]$. Moreover, $L(i^r,j,i^s)\cong L(i,j,i^{r+s-1})$ for any $r\geq 1,s\geq 0$.
\end{lem}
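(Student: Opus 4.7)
The plan is to first establish the ``moreover'' statement, after which the dichotomy of simple modules in $(\HH_n\lmod)[\gamma_n]$ will follow by a short counting argument.

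For the moreover part, I would first treat the base case $s=0$, proving $L(i^r,j)\cong L(i,j,i^{r-1})$ for all $r\geq 1$. The key input is Lemma \ref{ijcommutes}, which tells us that $L(i^{r-1})\circ L(i,j)\cong L(i,j)\circ L(i^{r-1})$ is irreducible. Starting from the canonical epimorphism $L(i)\circ L(j)\twoheadrightarrow L(i,j)$ onto the head, I would convolve on the left and on the right by $L(i^{r-1})$ to obtain surjections
\[
L(i^r)\circ L(j)\;\twoheadrightarrow\; L(i^{r-1})\circ L(i,j),\qquad L(i)\circ L(j)\circ L(i^{r-1})\;\twoheadrightarrow\; L(i,j)\circ L(i^{r-1}).
\]
Since the targets are simple while the sources have unique simple heads $L(i^r,j)$ and $L(i,j,i^{r-1})$ respectively by (\ref{simples}), each target must coincide with the corresponding head, and Lemma \ref{ijcommutes} chains the two identifications into $L(i^r,j)\cong L(i,j,i^{r-1})$. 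The general case $s\geq 0$ is then obtained by applying $\wf_i^s$ to both sides and using $L(i^r,j,i^s)=\wf_i^sL(i^r,j)$ and $L(i,j,i^{r+s-1})=\wf_i^sL(i,j,i^{r-1})$, both of which are immediate from (\ref{simples}).

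For the classification of simples, I would invoke Grojnowski's theorem (or the non-degenerate analogue of the results in \cite[Chap.~5]{Klesh:book}), which guarantees that every simple in $(\HH_n\lmod)[\gamma_n]$ is of the form $L(i_1,\dots,i_n)$ for some permutation of $(i^{n-1},j)$; such permutations are indexed by the position of the unique entry $j$, giving the $n$ a priori candidates $L(i^r,j,i^{n-1-r})$ for $0\leq r\leq n-1$. The moreover part collapses the $n-1$ candidates with $r\geq 1$ to the single isoclass $[L(i,j,i^{n-2})]$, leaving at most the two claimed isoclasses. To confirm they are genuinely distinct, I would compute $\eps_i$ on each, reducing by iterated $\we_i$ to the two base identities $\eps_i(L(i,j))=0$ and $\eps_i(L(j))=0$; both vanishings are transparent from the character data $\charac L(i,j)=[i,j]$ (extracted from the Shuffle Lemma applied to $L(i)\circ L(j)$ together with the non-degenerate version of \cite[Lemma 6.1.1]{Klesh:book}) and $\charac L(j)=[j]$. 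This yields $\eps_i(L(i,j,i^{n-2}))=n-2\neq n-1=\eps_i(L(j,i^{n-1}))$, so the two classes are indeed different.

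I do not foresee any serious obstacle; the only delicate point is to apply Lemma \ref{ijcommutes} and the unique-head property of convolution with the correct left/right orientation, so that the conventions match the definition $L(i_1,\dots,i_n)=\wf_{i_n}\cdots\wf_{i_1}\mathbf{1}$ of (\ref{simples}).
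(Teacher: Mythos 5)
Your proposal is correct and follows essentially the same approach as the paper: both rely on Lemma \ref{ijcommutes} as the central tool to identify convolution products, both establish $L(i^r,j,i^s)\cong L(i,j,i^{r+s-1})$ by relating it to the simple module $L(i,j)\circ L(i^{r+s-1})$, and both distinguish the two simples via an $\eps_i$ (equivalently $\we_i^{n-1}$) computation. One small imprecision: the claim that $L(i)\circ L(j)\circ L(i^{r-1})$ has unique simple head $L(i,j,i^{r-1})$ is not justified by (\ref{simples}) alone, but it is also unnecessary --- since $L(i,j)\circ L(i^{r-1})$ is already known to be simple, the surjection $L(i,j)\circ L(i)^{\circ(r-1)}\twoheadrightarrow\wf_i^{r-1}L(i,j)=L(i,j,i^{r-1})$ forces the identification directly.
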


\begin{proof} By \cite[Proposition 3.3]{GV} and the transitivity of induction functors, $$
L(i,j)\circ L(i^{n-2})\cong L(i,j)\circ \underbrace{L(i)\circ\cdots\circ L(i)}_{\text{$n-2$ copies}}
$$
By Lemma \ref{ijcommutes}, $L(i,j)\circ L(i^k)$ is simple for any $k\geq 0$. It follows that $$
L(i,j)\circ L(i^{n-2})\cong L(i,j)\circ L(i)^{\circ n-2}\cong \widetilde{f}_i^{n-2}L(i,j)=L(i,j,i^{n-2}) .
$$
Similarly, we have $L(j,i)\circ L(i^{n-2})\cong L(j,i^{n-1})$.

By Lemma \ref{ijcommutes}, $L:=L(i^{n-2})\circ L(i,j)$ is simple, so by Shuffle Lemma \cite[Lemma 2.4]{GV} $\eps_j(L) = 1$ and hence $\we_jL \neq 0$ is forced to be isomorphic to $L(i^{n-1})$. Thus $L\cong L(i^{n-1},j)$ by \cite[Corollary 3.6]{GV}. Applying Lemma \ref{ijcommutes}, $L\cong L(i,j,i^{n-2})$.

In general, for any $r\geq 1, s\geq 0$, $L(i^r,j,i^s)\cong\head\bigl(L(i^r,j)\circ L(i^s)\bigr)$ by \cite[Corollary 3.6(i)]{GV}. Therefore, by Lemma \ref{ijcommutes} and the result we obtained in the last paragraph, $$\begin{aligned}
L(i^r,j)\circ L(i^s)&\cong L(i^{r-1})\circ L(i,j)\circ L(i^s)\cong L(i,j)\circ L(i^{r-1})\circ L(i^s)\cong L(i,j)\circ L(i^{r+s-1})\\
&\cong L(i,j,i^{r+s-1}).
\end{aligned}
$$
This proves the second part of the lemma. Finally, note that $$
\we_i^{n-1}L(j,i^{n-1})\cong L(j)\neq 0,\quad \we_i^{n-1}L(i,j,i^{n-2})=\we_iL(i,j)=0 .
$$
We see that $L(j,i^{n-1})\not\cong L(i,j,i^{n-2})$. Now the first part of the lemma follows from the second part and the result we obtained in the last paragraph.
\end{proof}

\begin{cor}\label{convolution2} Let $m\in\N$. Then $L(i^2,j^2)\circ L(i^m)\cong L(i^m)\circ L(i^2,j^2)$ is simple, and $L(j^2,i^2)\circ L(i^m)\cong L(i^m)\circ L(j^2,i^2)$ is simple.
\end{cor}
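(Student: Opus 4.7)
The plan is to reduce to Lemmas \ref{real1} and \ref{ijcommutes} by writing $L(i^2,j^2)$ as the convolution square of $L(i,j)$ and then commuting past $L(i^m)$. First, by Lemma \ref{real1}, $L(i,j)$ is a real simple $\HH_2$-module and $L(i,j)\circ L(i,j)\cong L(i^2,j^2)$; by the same lemma applied with the roles of $i$ and $j$ swapped, $L(j,i)$ is real simple and $L(j,i)\circ L(j,i)\cong L(j^2,i^2)$. Moreover, the Kato module $L(i^m)$ is real simple by Example \ref{Katoreal}.

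Next, I would establish the commutation. Using the associativity (\ref{transitive}) of the convolution product together with two applications of Lemma \ref{ijcommutes} (taking $k=1$ there, which gives $L(i,j)\circ L(i^m)\cong L(i^m)\circ L(i,j)$), we compute
\[
L(i^2,j^2)\circ L(i^m)\cong L(i,j)\circ L(i,j)\circ L(i^m)\cong L(i,j)\circ L(i^m)\circ L(i,j)\cong L(i^m)\circ L(i,j)\circ L(i,j)\cong L(i^m)\circ L(i^2,j^2).
\]
The analogous computation, using the second commutation in Lemma \ref{ijcommutes}, gives $L(j^2,i^2)\circ L(i^m)\cong L(i^m)\circ L(j^2,i^2)$.

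For irreducibility, I would invoke Lemma \ref{real3properties}(2): since $L(i^2,j^2)$ is real simple (being $L(i,j)\circ L(i,j)$, the convolution of a real simple module with itself, which is real simple by Lemma \ref{real3properties}(3) using the trivial commutation $L(i,j)\circ L(i,j)\cong L(i,j)\circ L(i,j)$) and the commutation $L(i^2,j^2)\circ L(i^m)\cong L(i^m)\circ L(i^2,j^2)$ has been established, the convolution product $L(i^2,j^2)\circ L(i^m)$ is a simple $\HH_{2m+4}$-module. The same argument applied to $L(j,i)$ in place of $L(i,j)$ shows that $L(j^2,i^2)\circ L(i^m)$ is simple.

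There is no serious obstacle: this is essentially a bookkeeping corollary, and the key inputs (the identification $L(i,j)^{\circ 2}\cong L(i^2,j^2)$ together with its real-simplicity from Lemma \ref{real1}, and the commutation with Kato modules from Lemma \ref{ijcommutes}) have already been set up. If anything, one should be slightly careful in the case $i-j\in\{\pm 1\}$, where the irreducibility is genuinely non-trivial and relies on Lemma \ref{real1}; but this was precisely the point of isolating that lemma.
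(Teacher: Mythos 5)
Your proof is correct and relies on the same two lemmas (\ref{real1} and \ref{ijcommutes}) that the paper cites. The paper's route is marginally more direct: take $k=2$ in Lemma \ref{ijcommutes}, which already supplies both the commutation isomorphism and its irreducibility, then rewrite $L(i,j)^{\circ 2}\cong L(i^2,j^2)$ via Lemma \ref{real1}; you instead apply $k=1$ twice for the commutation and re-derive irreducibility from the real-simplicity of $L(i^2,j^2)$ via Lemma \ref{real3properties}(2), which is a valid but slightly redundant detour since the irreducibility was already part of Lemma \ref{ijcommutes}'s conclusion.
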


\begin{proof} This follows from Lemma \ref{ijcommutes} and Lemma \ref{real1}.
\end{proof}

\begin{cor}\label{realCor} Let $m,k\in\N$. Then both $L(i^m)\circ L(i,j)^{\circ k}$ and $L(i^m)\circ L(j,i)^{\circ k}$ are real simple $\HH_{m+2k}$-modules.
\end{cor}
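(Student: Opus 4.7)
The plan is to bootstrap Corollary~\ref{realCor} from pieces already assembled in this section. The four ingredients I would use are: Example~\ref{Katoreal} (which says each Kato module $L(i^m)$ is real simple), Lemma~\ref{real1} (which says $L(i,j)$, and symmetrically $L(j,i)$, are real simple $\HH_2$-modules), Lemma~\ref{ijcommutes} (which supplies the needed commutativity of convolution between $L(i^m)$ and either of $L(i,j)^{\circ k}$ or $L(j,i)^{\circ k}$), and the closure property Lemma~\ref{real3properties}(3) (which says that the convolution of two \emph{commuting} real simple modules is again real simple, and in particular that convolution powers of a real simple module are real simple).

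First I would observe that $L(i^m)$ is real simple by Example~\ref{Katoreal}, and that both $L(i,j)$ and $L(j,i)$ are real simple by Lemma~\ref{real1}. Applying the ``in particular'' clause of Lemma~\ref{real3properties}(3) to $L(i,j)$ and to $L(j,i)$, this upgrades to the statement that $L(i,j)^{\circ k}$ and $L(j,i)^{\circ k}$ are real simple $\HH_{2k}$-modules for every $k \geq 1$ (the $k=0$ case being trivial).

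Second, I would invoke Lemma~\ref{ijcommutes} to obtain the commutativity isomorphisms $L(i^m)\circ L(i,j)^{\circ k}\cong L(i,j)^{\circ k}\circ L(i^m)$ and $L(i^m)\circ L(j,i)^{\circ k}\cong L(j,i)^{\circ k}\circ L(i^m)$. With both factors real simple and commuting under convolution, Lemma~\ref{real3properties}(3) applies directly to the pairs $\bigl(L(i^m), L(i,j)^{\circ k}\bigr)$ and $\bigl(L(i^m), L(j,i)^{\circ k}\bigr)$, yielding that $L(i^m)\circ L(i,j)^{\circ k}$ and $L(i^m)\circ L(j,i)^{\circ k}$ are real simple $\HH_{m+2k}$-modules, which is exactly the claim.

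No serious obstacle is anticipated: every hypothesis required by Lemma~\ref{real3properties}(3) has already been verified in the preceding lemmas, so the proof is essentially immediate. The only minor point worth double-checking is that the ``two real simple factors that commute under $\circ$'' hypothesis is applied in the correct form — but this is precisely the combined content of Lemma~\ref{real1}, its convolution-power upgrade via Lemma~\ref{real3properties}(3), and the commutation isomorphisms of Lemma~\ref{ijcommutes}.
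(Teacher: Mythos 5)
Your proposal is correct and follows essentially the same route as the paper: use Lemma~\ref{ijcommutes} for the commutativity isomorphisms and then apply the ``commuting real simples have real simple convolution'' closure property from Lemma~\ref{real3properties}(3), together with the observation that $L(i^m)$, $L(i,j)^{\circ k}$, and $L(j,i)^{\circ k}$ are each real simple. The only discrepancy is that the paper's proof cites Lemma~\ref{real3properties}(4), which only yields simplicity of the convolution; the correct citation is part (3), as you have it, so your version is actually cleaner on this minor point.
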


\begin{proof} By Lemma \ref{ijcommutes}, $L(i^m)\circ L(i,j)^{\circ k}\cong L(i,j)^{\circ k}\circ L(i^m)$ and $L(i^m)\circ L(j,i)^{\circ k}\cong L(j,i)^{\circ k}\circ L(i^m)$. Since both $L(i^m)\cong L(i)^{\circ m}$, $L(i,j)^{\circ k}$ and $L(j,i)^{\circ k}$ are real simple modules, it follows from Lemma \ref{real3properties} 4) that both $L(i^m)\circ L(i,j)^{\circ k}$ and $L(i^m)\circ L(j,i)^{\circ k}$ are real simple $\HH_{m+2k}$-modules.
\end{proof}

\begin{cor}\label{headCor} Let $m,k\in\N$. For any simple module $M\in\HH_n\lmod$, we have that both $\head\bigl(M\circ L(i^m)\circ L(i,j)^{\circ k}\bigr)$ and $\head\bigl(M\circ L(i^m)\circ L(j,i)^{\circ k}\bigr)$ are simple.
\end{cor}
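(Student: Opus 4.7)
The proof is essentially immediate from the two preceding results. The plan is to combine the real-simpleness established in Corollary \ref{realCor} with the general fact from Lemma \ref{real3properties}(1) that a convolution of a simple module with a real simple module has simple head (and simple socle).

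First, I would invoke Corollary \ref{realCor} to conclude that both $L(i^m)\circ L(i,j)^{\circ k}$ and $L(i^m)\circ L(j,i)^{\circ k}$ are real simple $\HH_{m+2k}$-modules. Second, using the associativity of the convolution product (\ref{transitive}), I would rewrite
\[
M\circ L(i^m)\circ L(i,j)^{\circ k}\cong M\circ\bigl(L(i^m)\circ L(i,j)^{\circ k}\bigr),\qquad
M\circ L(i^m)\circ L(j,i)^{\circ k}\cong M\circ\bigl(L(i^m)\circ L(j,i)^{\circ k}\bigr).
\]
Finally, since $M\in\HH_n\lmod$ is simple and each of the bracketed factors on the right-hand side is a real simple module, Lemma \ref{real3properties}(1) applies directly to show that each of these convolution products has a simple head, which gives the desired conclusion.

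There is no real obstacle here: the content has been put into Corollary \ref{realCor} (via the commutation lemmas \ref{ijcommutes} and \ref{real1} and the general properties in Lemma \ref{real3properties}). The only thing to be slightly careful about is the correct pairing in the application of Lemma \ref{real3properties}(1), namely treating $L(i^m)\circ L(i,j)^{\circ k}$ (resp.\ $L(i^m)\circ L(j,i)^{\circ k}$) as the real simple module and $M$ as the accompanying simple module, and confirming via (\ref{transitive}) that the associativity bracketing is legitimate so that the hypothesis of the lemma is met.
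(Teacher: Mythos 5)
Your proof is correct and is essentially the paper's proof: the paper also derives the corollary directly from Corollary \ref{realCor} combined with Lemma \ref{real3properties}. (Note that the paper's proof cites part 2) of Lemma \ref{real3properties}, but part 1) is the statement that actually applies, exactly as you use it; the paper's reference appears to be a misprint.)
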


\begin{proof} This follows from Lemma \ref{real3properties} 2) and Corollary \ref{realCor}.
\end{proof}

We remark that the above corollary for the degenerate affine Hecke algebras also holds by the same argument. In particular, we partially fixes a gap of \cite[Lemma 6.3.2]{Klesh:book}, see \cite{Klesh:book2}. To be more precise, we prove that \cite[Lemma 6.3.2]{Klesh:book} holds whenever either $p>2$ in the degenerate affine Hecke algebra case or $e>2$ in the non-degenerate affine Hecke algebra case.
\medskip

\bigskip
\section{The operators $\we_{\widehat{i}}, \wf_{\wi}$ and their properties}

In this section, we assume that $1<\ell\in\N$, $q:=\xi$ is a primitive $2\ell$th root of unity in $F$. In particular, this implies that $\cha F\neq 2$.

\begin{lem}\label{efcommutes} Let $M$ be a simple module in $\Rep_I\HH_{n}$ and $i\in I$. Then $\widetilde{e}_i\widetilde{e}_{i+\ell}M\cong \widetilde{e}_{i+\ell}\widetilde{e}_{i}M$, and $\widetilde{e}_i\widetilde{e}_{i+\ell}M\neq 0$ if and only if $e_{\wi}M\neq 0$.
\end{lem}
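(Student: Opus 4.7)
The plan is to deduce this lemma directly from the two commutation results already in place, Lemma \ref{2functorsIso} and Lemma \ref{eijcommutes}. The crucial observation is that under the standing hypothesis $\ell\geq 2$ the two residues $i$ and $i+\ell$ satisfy $i-(i+\ell)\equiv -\ell\pmod{2\ell}$, which is neither $0$ nor $\pm 1$ in $I$. Consequently both lemmas apply to the pair $(i,i+\ell)$: we get $e_{\wi}=e_{i}e_{i+\ell}\cong e_{i+\ell}e_{i}$ from Lemma \ref{2functorsIso}, and the first assertion $\we_{i}\we_{i+\ell}M\cong\we_{i+\ell}\we_{i}M$ of the present lemma is an immediate instance of Lemma \ref{eijcommutes}.

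For the ``if'' direction of the nonvanishing equivalence, the strategy is to use exactness. Each of $e_{i},e_{i+\ell}$ is exact, being the composition of a restriction functor with the direct-summand projection $\Delta_{\bullet}$ onto a generalized $X_{n}$-eigenspace. Thus the socle inclusion $\we_{i+\ell}M=\soc(e_{i+\ell}M)\hookrightarrow e_{i+\ell}M$ gives, after applying $e_{i}$, an inclusion $e_{i}\we_{i+\ell}M\hookrightarrow e_{i}e_{i+\ell}M=e_{\wi}M$. Hence $\we_{i}\we_{i+\ell}M=\soc(e_{i}\we_{i+\ell}M)\ne 0$ forces $e_{\wi}M\ne 0$.

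For the reverse direction, assume $e_{\wi}M\ne 0$. The plan is to first show both $\we_{i}M\ne 0$ and $\we_{i+\ell}M\ne 0$ and then invoke the last assertion of Lemma \ref{eijcommutes}. The nonvanishing of $e_{i+\ell}M$ (hence of its socle $\we_{i+\ell}M$) is read off directly; the nonvanishing of $e_{i}M$ (hence of $\we_{i}M$) follows from the isomorphism $e_{i}e_{i+\ell}\cong e_{i+\ell}e_{i}$ of Lemma \ref{2functorsIso}. Now setting $N:=\we_{i+\ell}M$, so that $M=\wf_{i+\ell}N$, the identity $\eps_{i}(\wf_{i+\ell}N)=\eps_{i}(N)$ from Lemma \ref{eijcommutes} yields $\eps_{i}(\we_{i+\ell}M)=\eps_{i}(M)>0$, which is precisely the statement $\we_{i}\we_{i+\ell}M\ne 0$. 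There is no genuine obstacle here; the proof is a routine assembly of ingredients already established, the only point requiring attention being the verification that the hypothesis $i-(i+\ell)\not\equiv 0,\pm 1$ of Lemmas \ref{2functorsIso} and \ref{eijcommutes} is indeed satisfied.
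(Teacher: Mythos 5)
Your proof is correct and takes essentially the same route as the paper: the first assertion and the forward implication both come from Lemma \ref{eijcommutes} together with Lemma \ref{2functorsIso}, using $\eps_i(\wf_{i+\ell}N)=\eps_i(N)$ with $N=\we_{i+\ell}M$; and the converse comes from the containment $e_i\we_{i+\ell}M\hookrightarrow e_ie_{i+\ell}M$. The only cosmetic difference is that you spell out the exactness of $e_i$ (and hence the socle inclusion surviving $e_i$) more explicitly than the paper does.
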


\begin{proof} Since $\ell>1$, it follows from Lemma \ref{eijcommutes} that $\widetilde{e}_i\widetilde{e}_{i+\ell}M\cong \widetilde{e}_{i+\ell}\widetilde{e}_{i}M$.

Suppose that $e_{\wi}M\neq 0$. By Lemma \ref{2functorsIso}, we have that $e_iM\neq 0\neq e_{i+\ell}M$. Hence $\widetilde{e}_{i}M\neq 0\neq \widetilde{e}_{i+\ell}(M)$. This implies that $\eps_i(M)\geq 1\leq\eps_{i+\ell}(M)$. We set $N:=\widetilde{e}_{i+\ell}M\neq 0$. Applying Lemma \ref{eijcommutes}, we can deduce that $\eps_i(N)=\eps_i(M)\geq 1$. Thus $\widetilde{e}_iN=\widetilde{e}_i\we_{i+\ell}M\neq 0$ as required.

Conversely, if $\widetilde{e}_i\widetilde{e}_{i+\ell}M\neq 0$ then $e_i{\we}_{i+\ell}M\neq 0$, and hence $e_{\wi}M={e}_i{e}_{i+\ell}M\neq 0$. This completes the proof of the lemma.
\end{proof}

Recall from Definition \ref{epsHat1} and Corollary \ref{epsHat2} that $$
{\eps}_{\widehat{i}}(M)=\max\{m\geq 0|\whd_{\wi^m}M\neq 0\}=\max\{m\geq 0|e_{\wi}^mM\neq 0\}.
$$

\begin{lem}\label{imsoclehead} Let $m\in\N$, $i\in I$ and $M\in\Rep_I\HH_n$ be a simple module. Then $\head(M\circ L(\wi^m))\cong(\widetilde{f}_i\widetilde{f}_{i+\ell})^mM$.
\end{lem}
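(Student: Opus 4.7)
The plan is to proceed by induction on $m$, with the base case $m=0$ being trivial. The key preliminary observation I would make is that $L(\wi^m)$ is a \emph{real} simple $\HH_{2m}$-module. Indeed, by Lemma \ref{uniquesimple} applied to the sequence of $4m$ indices obtained by concatenating two copies of any permutation of $\wi^m$, one identifies $L(\wi^m)\circ L(\wi^m)$ with the simple module $L(\wi^{2m})$. By Lemma \ref{real3properties}(1), it follows that $M\circ L(\wi^m)$ admits a (unique) simple head.

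For the inductive step, I would set $M_{m-1}:=(\wf_i\wf_{i+\ell})^{m-1}M$, which by the inductive hypothesis equals $\head(M\circ L(\wi^{m-1}))$. Convolving the canonical surjection $M\circ L(\wi^{m-1})\twoheadrightarrow M_{m-1}$ on the right with $L(i+\ell)\circ L(i)$, and then composing with the natural surjections defining $\wf_{i+\ell}M_{m-1}$ and $\wf_i\wf_{i+\ell}M_{m-1}$, yields the chain
\begin{equation*}
M\circ L(\wi^{m-1})\circ L(i+\ell)\circ L(i)\twoheadrightarrow M_{m-1}\circ L(i+\ell)\circ L(i)\twoheadrightarrow \wf_{i+\ell}(M_{m-1})\circ L(i)\twoheadrightarrow (\wf_i\wf_{i+\ell})^m M.
\end{equation*}
By Lemma \ref{uniquesimple} (so that $L(i+\ell)\circ L(i)\cong L(\wi)$ and $L(\wi^{m-1})\circ L(\wi)\cong L(\wi^m)$), together with the associativity of $\circ$ from \eqref{transitive}, the source of this composition is canonically identified with $M\circ L(\wi^m)$.

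Combining these two inputs, $(\wf_i\wf_{i+\ell})^mM$ is a nonzero simple quotient of $M\circ L(\wi^m)$, and since the latter has a simple head, we conclude $(\wf_i\wf_{i+\ell})^mM\cong\head(M\circ L(\wi^m))$, completing the induction. The only mildly delicate point will be the identification of the source of the surjection chain via iterated use of Lemma \ref{uniquesimple}; no substantial obstacle is expected beyond this routine bookkeeping.
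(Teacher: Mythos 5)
Your proof is correct and follows essentially the same strategy as the paper's: show $(\wf_i\wf_{i+\ell})^m M$ is a quotient of $M\circ L(\wi^m)$, and show $M\circ L(\wi^m)$ has a simple head because $L(\wi^m)$ is real simple (via Lemma~\ref{real3properties}). The one place you diverge is in establishing that $L(\wi^m)$ is real: the paper cites Lemma~\ref{widehatLemma}\,2) (with $k=0$), which carries the hypothesis $\ell>2$, whereas you deduce it directly from Lemma~\ref{uniquesimple} by noting that $L(\wi^m)\circ L(\wi^m)\cong L(\wi^{2m})$ is simple, which only needs $\ell>1$ and thus matches the standing assumption of the section more cleanly. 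Your explicit surjection chain is also more detailed than the paper's ``it is easy to see,'' but the underlying bookkeeping via Lemma~\ref{uniquesimple} and \eqref{transitive} is exactly what the paper has in mind.
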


\begin{proof} Since $L(\wi)\cong L(i)\circ L(i+\ell)\cong L(i+\ell)\circ L(i)$, it is easy to see that $(\widetilde{f}_i\widetilde{f}_{i+\ell})^mM$ is a quotient of
$M\circ L(\wi^m)$. Using Lemma \ref{real1}, we know that $L(\wi^m)$ is a real simple module. Now the lemma follows from Lemma \ref{real3properties}.
\end{proof}

\begin{prop}\label{RedBao} Let $M$ be a simple module in $\Rep_I\HH_n$. Let $i\in I$. If $e_{\wi}M\neq 0$ then $\soc e_{\wi}M$ is simple. Furthermore, $\soc e_{\wi}M\cong \widetilde{e}_i\widetilde{e}_{i+\ell}M$ and ${\eps}_{\widehat{i}}(\soc e_{\wi}M)={\eps}_{\widehat{i}}(M)-1$.
\end{prop}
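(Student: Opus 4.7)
The plan is to establish the three assertions in order: embed $K:=\widetilde{e}_i\widetilde{e}_{i+\ell}M$ into $e_{\wi}M$, show this exhausts the socle, and then read off the $\eps_{\widehat{i}}$-drop. By Lemma~\ref{efcommutes} and the hypothesis $e_{\wi}M\neq 0$, $K$ is a nonzero simple $\HH_{n-2}$-module. The embedding itself is immediate: from the inclusion $\widetilde{e}_{i+\ell}M=\soc(e_{i+\ell}M)\hookrightarrow e_{i+\ell}M$ and exactness of $e_i$, the chain $K=\soc(e_i(\widetilde{e}_{i+\ell}M))\hookrightarrow e_i(\widetilde{e}_{i+\ell}M)\hookrightarrow e_ie_{i+\ell}M=e_{\wi}M$ does the job.

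My strategy for the socle claim is to lift the question to the $\HH_{n-2,2}$-module $\whd_{\wi}M$, which contains $e_{\wi}M=\Delta_{(i,i+\ell)}M$ as an $\HH_{n-2}$-summand (the complementary summand being $\Delta_{(i+\ell,i)}M$). Using the functorial isomorphism (\ref{functorial}) together with Lemma~\ref{imsoclehead} (which identifies $\head(N\circ L(\wi))\cong\widetilde{f}_i\widetilde{f}_{i+\ell}N$, a simple module) and Lemma~\ref{DeltaCharacter}, one computes that $\soc_{\HH_{n-2,2}}\whd_{\wi}M\cong K\boxtimes L(\wi)$, with the unique nonzero contribution coming from $N=K$.

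The main obstacle is transferring this $\HH_{n-2,2}$-statement back to $\HH_{n-2}$. Given any simple $\HH_{n-2}$-submodule $L\subseteq e_{\wi}M$, I propose to consider $\widetilde{L}:=L+T_{n-1}L\subseteq\whd_{\wi}M$. Since $T_{n-1}$ commutes with $\HH_{n-2}$ and swaps the $X_{n-1}$-generalized eigenvalues $q^i$ and $q^{i+\ell}$ (distinct because $q^{\ell}=-1\neq 1$), the sum is direct and $T_{n-1}|_L$ is an $\HH_{n-2}$-isomorphism onto $T_{n-1}L$; the Hecke quadratic relation then guarantees that $\widetilde{L}$ is $T_{n-1}$-stable, hence an $\HH_{n-2,2}$-submodule of $\whd_{\wi}M$. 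Matching the $\HH_{n-2}$-restriction $\widetilde{L}\cong L^{\oplus 2}$ against the constraint from Lemma~\ref{DeltaCharacter} that every $\HH_{n-2,2}$-composition factor is of the form $N\boxtimes L(\wi)$ (each of which restricts to $N^{\oplus 2}$) forces $\widetilde{L}\cong L\boxtimes L(\wi)$ to be a simple $\HH_{n-2,2}$-module. Therefore $\widetilde{L}\subseteq\soc_{\HH_{n-2,2}}\whd_{\wi}M\cong K\boxtimes L(\wi)$, whence $L\cong K$.

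Finally, taking $L=K$ in the preceding construction yields an embedding $K\boxtimes L(\wi)\hookrightarrow\whd_{\wi}M$, and Lemma~\ref{epswi01} (applied with $m=1$) delivers $\eps_{\widehat{i}}(K)=\eps_{\widehat{i}}(M)-1$ at once.
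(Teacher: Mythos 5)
Your proposal takes a genuinely different route from the paper (the paper argues via central elements of $\HH_n$, showing that $X_{n-1}+X_n$ and $X_{n-1}X_n$ act as scalars on $\soc e_{\wi}M$ and then using a Jordan-block argument), but there is a gap in the key transfer step.

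First, the assertion that $T_{n-1}$ ``swaps the $X_{n-1}$-generalized eigenvalues $q^i$ and $q^{i+\ell}$'' is not correct. It is the intertwining element $\Phi_{n-1}=(1-X_{n-1}X_{n}^{-1})T_{n-1}+1-q$ that cleanly interchanges the generalized $(X_{n-1},X_n)$-eigenspaces $\Delta_{(i,i+\ell)}M$ and $\Delta_{(i+\ell,i)}M$ (Lemma~\ref{Intertwin}~3)). Solving for $T_{n-1}$ on $\Delta_{(i,i+\ell)}M$ gives $T_{n-1}x=(1-X_{n-1}X_n^{-1})^{-1}\Phi_{n-1}x-(1-q)(1-X_{n-1}X_n^{-1})^{-1}x$, and the second summand stays inside $\Delta_{(i,i+\ell)}M$, so $T_{n-1}L\not\subseteq\Delta_{(i+\ell,i)}M$. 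The directness $L\cap T_{n-1}L=0$ can still be rescued by appealing to the invertibility of $\Phi_{n-1}^2$ (which has generalized eigenvalue $(q+1)^2\neq 0$ on $\Delta_{(i,i+\ell)}M$ since $q\neq -1$), but that is a different argument from the one you give.

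The more serious gap is the inference ``\,$\widetilde{L}$ is $T_{n-1}$-stable, hence an $\HH_{n-2,2}$-submodule''. The algebra $\HH_{\{n-1,n\}}$ is generated not only by $T_{n-1}$ but also by $X_{n-1}^{\pm 1},X_n^{\pm 1}$, so you must also show $\widetilde{L}$ is $X_{n-1}$- and $X_n$-stable. Since $L$ is merely an $\HH_{n-2}$-submodule, there is no a priori reason for $X_{n-1}L\subseteq L$: the space $X_{n-1}L$ is another simple $\HH_{n-2}$-submodule of $e_{\wi}M$ isomorphic to $L$, but may be a different embedded copy. Establishing that $X_{n-1}$ and $X_n$ act as scalars on $\soc e_{\wi}M$ is exactly the hard content of the paper's proof: one uses that $X_{n-1}+X_n$ and $X_{n-1}X_n$ act as scalars (by block/central-character considerations) and then the hypothesis $\cha F\neq 2$ to force simultaneous diagonalizability. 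Your proposal never invokes $\cha F\neq 2$ at all, which is a strong indication that this step is missing; without it, $\widetilde{L}$ need not be an $\HH_{n-2,2}$-module and the comparison with $\soc_{\HH_{n-2,2}}\whd_{\wi}M$ does not go through.

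The opening embedding $K\hookrightarrow e_{\wi}M$, the computation of $\soc_{\HH_{n-2,2}}\whd_{\wi}M\cong K\boxtimes L(\wi)$ (which is essentially Lemma~\ref{socWhd2}~1) with $m=1$), and the final use of Lemma~\ref{epswi01} are all sound. To repair the middle step you would need to first establish, as the paper does, that $X_{n-1}$ and $X_n$ act as the scalars $q^i$ and $q^{i+\ell}$ on the whole socle of $e_{\wi}M$.
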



\begin{proof} Being a center element of $\HH_n$, $X_1+\cdots+X_n$ acts as a scalar $c$ on $M$. Similarly, the center element $\sum_{1\leq i<j\leq n}X_iX_j$ acts as a scalar $c'$ on $M$.

Assume that $e_{\wi}M\neq 0$. Let $L\subseteq e_{\wi}M$ be any simple $\HH_{n-2}$-submodule. Then the center element $X_1+\cdots+X_{n-2}$ of $\HH_{n-2}$ acts as the scalar $c-q^i-q^{i+\ell}=c$ on $L$. Similarly, the center element $\sum_{1\leq i<j\leq n-2}X_iX_j$ of $\HH_{n-2}$ acts as a scalar on $L$. Since, $$\begin{aligned}
&X_{n-1}+X_n=(X_1+\cdots+X_n)-(X_1+\cdots+X_{n-2}),\\
&X_{n-1}X_n=\sum_{1\leq i<j\leq n}X_iX_j-\sum_{1\leq i<j\leq n-2}X_iX_j-(X_1+\cdots+X_{n-2})(X_{n-1}+X_n),
\end{aligned}
$$
it follows that both $X_{n-1}+X_n$ and $X_{n-1}X_n$ act as scalars on $L$ and these scalars are invariant when $L$ varies by block consideration which implies that both $X_{n-1}+X_n$ and $X_{n-1}X_n$ act as scalars on $\soc e_{\wi}M$. Note that both $X_{n-1}$ and $X_n$ stabilize $\soc e_{\wi}M$, and $q^i$ is the only eigenvalue of $X_{n-1}$ on $\soc e_{\wi}M$, $-q^i=q^{i+\ell}$ is the only eigenvalue of $X_{n}$ on $\soc e_{\wi}M$, it follows that $X_{n-1}+X_n$ act as $0$ on $\soc e_{\wi}M$ and $X_{n-1}X_n$ act as $-q^{2i}$ on $\soc e_{\wi}M$.

We claim that both $X_{n-1}$ and $X_n$ act as scalars on $\soc e_{\wi}M$. Since $X_n$ commutes with $X_{n-1}$, they can be (upper)-triangularized on $\soc e_{\wi}M$ simultaneously.  Without loss of generality, we can assume that there is a basis of $\soc e_{\wi}M$ under which the matrix of $X_{n-1}$ is a Jordan normal form $A$ with diagonal elements all being equal to $q^i$, and under which the matrix of $X_n$ is a upper-triangular matrix $B$ with diagonal elements all being equal to $-q^i=q^{i+\ell}$, and $A+B=0, AB=-q^{2i}$. If either $A$ or $B$ is a diagonal matrix, then it is immediate that both $A$ and $B$ are diagonal matrices and our claim follows. Suppose that this is not the case. Since $A$ is a non-diagonal Jordan matrix, $B$ is upper-triangular and $A+B=0$, it is easy to see that if $A(i,i+1)=1$ then $B(i,i+1)=-1$ and hence $(AB)(i,i+1)=-2q^i\neq 0$ as $\cha F\neq 2$, a contradiction to the fact that $AB=-q^{2i}$. This proves our claim.

Therefore, $X_{n-1}$ acts as $q^i$ on $\soc e_{\wi}M$, and $X_n$ act as $-q^i$ on $\soc e_{\wi}M$. In particular, any constituent $L$ of $\soc e_{\wi}M$ contributes a simple submodule of $\res_{n-2,1,1}^{n}M$ which is isomorphic to $L\boxtimes L(i)\boxtimes L(i+\ell)$. By Frobenius reciprocity, we have a surjective homomorphism $$
\ind_{n-2,2}^{n}L\boxtimes L(i,i+\ell)\cong\ind_{n-2,1,1}^{n}L\boxtimes L(i)\boxtimes L(i+\ell)\twoheadrightarrow M .
$$
Since $\ell>1$, $L(i,i+\ell)$ is a real simple module by Lemma \ref{real1}. It follows from Lemma \ref{real3properties} that $L\circ L(i,i+\ell)$ has a unique simple head. On the other hand, by definition and (\ref{widetildeef}), there is a natural surjection: $$
\ind_{n-2,1,1}^{n}L\boxtimes L(i)\boxtimes L(i+\ell)\twoheadrightarrow\widetilde{f}_{i+\ell}\widetilde{f}_iL\neq 0 .
$$
It follows that $M\cong\head(\ind_{n-2,2}^{n}L\boxtimes L(i,i+\ell))\cong\widetilde{f}_{i+\ell}\widetilde{f}_iL$. Hence by \cite{G}, the non-degenerate version of \cite[Lemma 5.2.3]{Klesh:book} and Lemma \ref{efcommutes},
$L\cong\widetilde{e}_i\widetilde{e}_{i+\ell}M\cong \widetilde{e}_{i+\ell}\widetilde{e}_{i}M$.

Applying Frobenius reciprocity together with the proof in the above two paragraphs, we get that $$\begin{aligned}
&\quad\,\dim\Hom_{\HH_{n-2}}(\widetilde{e}_i\widetilde{e}_{i+\ell}M,e_ie_{i+\ell}M)=\dim\Hom_{\HH_{n-2,1,1}}(\widetilde{e}_i\widetilde{e}_{i+\ell}M\boxtimes L(i)\boxtimes L(i+\ell),\res_{n-2,1,1}^{n}M)\\
&=\dim\Hom_{\HH_n}\bigl(\ind_{n-2,1,1}^{n}\widetilde{e}_i\widetilde{e}_{i+\ell}M\boxtimes L(i)\boxtimes L(i+\ell), M\bigr)\\
&=\dim\Hom_{\HH_n}\bigl(\ind_{n-2,2}^{n}\widetilde{e}_i\widetilde{e}_{i+\ell}M\boxtimes L(\wi), M\bigr)\\
&=\dim\Hom_{\HH_n}\bigl(\ind_{n-2,2}^{n}L\boxtimes L(\wi), M\bigr)=1 .
\end{aligned}
$$
Thus $\soc e_{\wi}M\cong \widetilde{e}_i\widetilde{e}_{i+\ell}M$ is simple.

Apply Lemma \ref{epswi01} to the case $m = 1$, we get ${\eps}_{\widehat{i}}(\soc e_{\wi}M)={\eps}_{\widehat{i}}(M)-1$.
\end{proof}

\begin{dfn} Let $i\in I$ and $M$ a simple module in $\Rep_I\HH_n$. We define $$
\widetilde{e}_{\wi}(M):=\soc\circ e_{\wi}(M),\quad\,\widetilde{f}_{\wi}(M):=\head\bigl(\ind_{n-2,2}^{n}M\boxtimes L(i,i+\ell)\bigr) .
$$
\end{dfn}

\begin{cor}\label{KashiwaraOper} Let $i\in I$ and $M$ a simple module in $\Rep_I\HH_n$. Then $$\begin{aligned}
 \widetilde{e}_{\wi}(M)&\cong \widetilde{e}_i\widetilde{e}_{i+\ell}M\cong \widetilde{e}_{i+\ell}\widetilde{e}_{i}M,\quad
\widetilde{f}_{\wi}(M)\cong \widetilde{f}_i\widetilde{f}_{i+\ell}M\cong \widetilde{f}_{i+\ell}\widetilde{f}_{i}M,\\
 {\eps}_{\widehat{i}}(M)&=\max\{m\geq 0|\widetilde{e}_{\wi}^mM\neq 0\}=\max\{m\geq 0|(\widetilde{e}_i\widetilde{e}_{i+\ell})^mM\neq 0\}\\
 &=\min\{\eps_i(M),\eps_{i+\ell}(M)\} \end{aligned}
$$
Furthermore, if $N\in \Rep_I\HH_{n+2}$ is a simple module, then $\widetilde{f}_{\wi}M\cong N$ if and only if $\widetilde{e}_{\wi}N\cong M$.
\end{cor}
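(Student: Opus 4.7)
The plan is to derive the corollary by packaging the results that have already been proved, namely Proposition~\ref{RedBao}, Lemma~\ref{efcommutes}, Lemma~\ref{imsoclehead}, Lemma~\ref{eijcommutes} and Corollary~\ref{epsHat2}. First, the top line is essentially a restatement of Proposition~\ref{RedBao}: by definition $\we_{\wi}(M)=\soc e_{\wi}M$, which Proposition~\ref{RedBao} identifies with $\we_i\we_{i+\ell}M$; the commutativity $\we_i\we_{i+\ell}M\cong\we_{i+\ell}\we_iM$ is Lemma~\ref{efcommutes}. For the second line, I would unwind $\wf_{\wi}(M)=\head(M\circ L(i,i+\ell))=\head(M\circ L(\wi))$ and apply Lemma~\ref{imsoclehead} with $m=1$ to obtain $\wf_{\wi}(M)\cong\wf_i\wf_{i+\ell}M$. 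Since $\ell>1$ forces $i-(i+\ell)=-\ell\notin\{0,\pm1\}$, Lemma~\ref{eijcommutes} then yields $\wf_i\wf_{i+\ell}M\cong\wf_{i+\ell}\wf_iM$.

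For the $\eps_{\widehat{i}}$ identities I would argue inductively. The equality with $\max\{m\geq 0\mid e_{\wi}^mM\neq 0\}$ is Corollary~\ref{epsHat2}. To replace $e_{\wi}$ by $\we_{\wi}$, I note that Proposition~\ref{RedBao} gives $\we_{\wi}M\neq 0\iff e_{\wi}M\neq 0$ together with $\eps_{\widehat{i}}(\we_{\wi}M)=\eps_{\widehat{i}}(M)-1$; iterating produces the first alternative description, and the equivalence with $(\we_i\we_{i+\ell})^mM\neq 0$ is then forced by the first line. For the formula $\eps_{\widehat{i}}(M)=\min\{\eps_i(M),\eps_{i+\ell}(M)\}$, I would combine Lemma~\ref{efcommutes} (which says $\we_i\we_{i+\ell}M\neq 0$ iff both $\eps_i(M)\geq 1$ and $\eps_{i+\ell}(M)\geq 1$) with the identity $\eps_i(\we_{i+\ell}M)=\eps_i(M)$ supplied by Lemma~\ref{eijcommutes}, together with its counterpart after swapping $i$ and $i+\ell$. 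Consequently each application of $\we_{\wi}$ decreases both $\eps_i$ and $\eps_{i+\ell}$ by exactly one, and induction on $\min\{\eps_i(M),\eps_{i+\ell}(M)\}$ finishes this step.

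Finally, for the equivalence $\wf_{\wi}M\cong N\Longleftrightarrow\we_{\wi}N\cong M$, I would use the known one-step Kashiwara relations $\we_i\wf_iL\cong L$ and $\we_{i+\ell}\wf_{i+\ell}L\cong L$ for every simple $L$ (which are Grojnowski's results, cf.\ the non-degenerate version of \cite[Lemma~5.2.3]{Klesh:book}). If $N\cong\wf_{\wi}M\cong\wf_i\wf_{i+\ell}M$, then by the first line $\we_{\wi}N\cong\we_{i+\ell}\we_iN\cong\we_{i+\ell}(\we_i\wf_i\wf_{i+\ell}M)\cong\we_{i+\ell}\wf_{i+\ell}M\cong M$. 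The converse direction can be read off from the proof of Proposition~\ref{RedBao}: the analysis there shows that any simple $N$ with $\soc e_{\wi}N\cong L$ is isomorphic to $\wf_{i+\ell}\wf_iL$, so if $\we_{\wi}N\cong M$ then $N\cong\wf_{i+\ell}\wf_iM\cong\wf_{\wi}M$.

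All the real work has been absorbed into Proposition~\ref{RedBao} and the real-simple-module machinery of Section~3, so what remains is essentially bookkeeping. The step I expect to require the most care is the equality $\eps_{\widehat{i}}(M)=\min\{\eps_i(M),\eps_{i+\ell}(M)\}$, because one must verify that \emph{each} application of $\we_{\wi}$ drops both one-step functions $\eps_i$ and $\eps_{i+\ell}$ by exactly one; this hinges on the commutations furnished by Lemmas~\ref{efcommutes} and~\ref{eijcommutes}, so it is not automatic from the definitions alone.
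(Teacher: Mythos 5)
The paper gives no proof for Corollary \ref{KashiwaraOper}; it is stated immediately after Proposition \ref{RedBao} and the definition of $\we_{\wi},\wf_{\wi}$, evidently intended as a routine consequence. Your proposal supplies exactly the bookkeeping the authors left implicit, and it is correct: the identification of $\we_{\wi}$ with $\we_i\we_{i+\ell}\cong\we_{i+\ell}\we_i$ via Proposition \ref{RedBao} and Lemma \ref{efcommutes}, the identification of $\wf_{\wi}$ with $\wf_i\wf_{i+\ell}\cong\wf_{i+\ell}\wf_i$ via Lemma \ref{imsoclehead} (with $m=1$) and Lemma \ref{eijcommutes}, the chain of $\eps_{\wi}$ equalities from Corollary \ref{epsHat2} and iterated Proposition \ref{RedBao}, and the inverse relationship via the one-step identity $\we_a\wf_aL\cong L$ together with the converse read off from the proof of Proposition \ref{RedBao}.

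One small caveat on attribution, not correctness: you quote Lemma \ref{efcommutes} as asserting ``$\we_i\we_{i+\ell}M\neq 0$ iff both $\eps_i(M)\geq 1$ and $\eps_{i+\ell}(M)\geq 1$,'' whereas the lemma as stated only asserts the equivalence with $e_{\wi}M\neq 0$; the reformulation you want is what its proof establishes (using Lemma \ref{2functorsIso} and Lemma \ref{eijcommutes}), so your inference is valid, but it would be cleaner to spell out that short derivation. Similarly, in the induction for $\eps_{\wi}(M)=\min\{\eps_i(M),\eps_{i+\ell}(M)\}$, the key fact $\eps_i(\we_{i+\ell}M)=\eps_i(M)$ when $\we_{i+\ell}M\neq0$ is not literally a line of Lemma \ref{eijcommutes} (which states $\eps_i(\wf_jM)=\eps_i(M)$); you obtain it by writing $M\cong\wf_{i+\ell}(\we_{i+\ell}M)$. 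You clearly understand this, but since the lemma is cited directly, it is worth inserting that one-line bridge so a reader does not have to reconstruct it.

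With those two clarifications, the argument is complete and matches what the paper's structure implies its authors had in mind.
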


By Corollary \ref{KashiwaraOper}, it is clear that for any $n\geq 1$ and any simple module $M\in\Rep_I\HH_n$, \begin{equation}\label{weEpsHat1}
\text{$0\neq \we_{\wi}M$ is simple if and only if $\eps_{\wi}(M)>0$.}
\end{equation}

The following lemma is a ``hat" analogue of \cite[Lemma 5.1.4, Theorem 5.1.6 and Lemma 5.2.1]{Klesh:book}. However, our proof is different. In fact, we remark that \cite[Theorem 5.1.6 and Lemma 5.2.1]{Klesh:book} and their proof can not be transformed directly into our ``hat" set-up because the naive``hat" version of \cite[Lemma 5.1.4]{Klesh:book} does not hold unless we have the extra assumption that $\eps_i(M)=\eps_{i+\ell}(M)$.

\begin{lem}\label{socWhd2} Let $M$ be a simple module in $\Rep_I\HH_{n}$. Let $i\in I$, $\eps:={\eps}_{\widehat{i}}(M)$.

1) For any $0\leq m\leq {\eps}$, $\soc\whd_{\wi^m}M\cong (\we_i\we_{i+\ell})^mM\boxtimes L(\wi^m)$ with ${\eps}_{\widehat{i}}((\we_i\we_{i+\ell})^mM)=\eps-m$.

2) If $\eps_i(M) = \eps_{i+\ell}(M)$, then $\whd_{\wi^\eps} M = \soc\whd_{\wi^\eps} M$;
\end{lem}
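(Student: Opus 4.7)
For \textbf{part (1)}, by Lemma \ref{DeltaCharacter} every composition factor (and hence every irreducible submodule) of $\whd_{\wi^m}M$ has the form $N\boxtimes L(\wi^m)$ for some simple $N\in\HH_{n-2m}\lmod$. By the functorial isomorphism \eqref{functorial}, such an embedding is equivalent to a nonzero map $N\circ L(\wi^m)\to M$. Since $L(\wi^m)$ is a real simple module by Lemma \ref{widehatLemma}(2), Lemma \ref{real3properties}(1) ensures that $N\circ L(\wi^m)$ has a simple head, which by Lemma \ref{imsoclehead} equals $(\wf_i\wf_{i+\ell})^mN$. Thus such a map exists iff $M\cong(\wf_i\wf_{i+\ell})^mN$, which by Corollary \ref{KashiwaraOper} is equivalent to $N\cong(\we_i\we_{i+\ell})^mM$; and in that case the Hom is one-dimensional, so $\soc\whd_{\wi^m}M\cong(\we_i\we_{i+\ell})^mM\boxtimes L(\wi^m)$. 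The assertion ${\eps}_{\widehat{i}}((\we_i\we_{i+\ell})^mM)=\eps-m$ is then immediate from Lemma \ref{epswi01}.

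For \textbf{part (2)}, the extra hypothesis combined with Corollary \ref{KashiwaraOper} gives $\eps_i(M)=\eps_{i+\ell}(M)=\eps$. The plan is to compare dimensions. First, decompose
\[
\whd_{\wi^\eps}M=\bigoplus_{\sigma}\Delta_{\sigma}M,
\]
where $\sigma$ runs over the arrangements of the multiset $\{i^{\eps},(i+\ell)^{\eps}\}$ in positions $n-2\eps+1,\dots,n$. Since the eigenvalues $\xi^i$ and $\xi^{i+\ell}=-\xi^i$ differ by $\ell$ with $\ell>1$ and $q\neq-1$, the intertwining elements $\Phi_w$ of Lemma \ref{Intertwin} induce $\HH_{n-2\eps}$-module isomorphisms between the various $\Delta_\sigma M$ (in the spirit of the proof of Lemma \ref{2functorsIso}), giving $\dim\whd_{\wi^\eps}M=\binom{2\eps}{\eps}\dim\Delta_{i^\eps(i+\ell)^\eps}M$. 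Next, applying the non-degenerate version of \cite[Theorem 5.1.6]{Klesh:book} twice -- first with $\eps_{i+\ell}(M)=\eps$, and then, via Lemma \ref{eijcommutes}, with $\eps_i(\we_{i+\ell}^\eps M)=\eps_i(M)=\eps$ -- yields
\[
\Delta_{i^\eps(i+\ell)^\eps}M\cong(\we_i\we_{i+\ell})^\eps M\boxtimes L(i^\eps)\boxtimes L((i+\ell)^\eps).
\]
Since $L(i^\eps)$ and $L((i+\ell)^\eps)$ are Kato modules of dimension $\eps!$, and by Lemma \ref{uniquesimple} $L(\wi^\eps)\cong L(i^\eps)\circ L((i+\ell)^\eps)$ has dimension $\binom{2\eps}{\eps}(\eps!)^2$, a direct count gives $\dim\whd_{\wi^\eps}M=\dim\bigl((\we_i\we_{i+\ell})^\eps M\boxtimes L(\wi^\eps)\bigr)$. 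Combined with the simple-socle statement of part (1), this forces $\whd_{\wi^\eps}M=\soc\whd_{\wi^\eps}M$.

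The main obstacle is the identification of $\Delta_{i^\eps(i+\ell)^\eps}M$ as an external tensor product of three simple modules. This depends on having the non-degenerate analogue of Kleshchev's semisimplicity theorem for $\Delta_{i^\eps}M$ at hand, together with the commutation $\we_i\we_{i+\ell}\cong\we_{i+\ell}\we_i$ from Lemma \ref{eijcommutes}. The intertwining step for the various shuffles is more of a bookkeeping exercise, but one must verify that $\Phi_k$ is invertible on the relevant eigenspaces, which is exactly where the assumption $\ell>1$ (so $q+1\neq 0$) enters.
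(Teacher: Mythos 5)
For part (1) your argument is essentially the paper's: the functorial isomorphism \eqref{functorial} together with the reality of $L(\wi^m)$ shows that the Hom-space from $(\we_i\we_{i+\ell})^mM\boxtimes L(\wi^m)$ into $\whd_{\wi^m}M$ is one-dimensional and is the whole socle, and the $\eps_{\wi}$-value is Lemma~\ref{epswi01}.

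For part (2) you take a genuinely different route, and it is correct. The paper's proof stays inside the head/socle formalism: from part (1) it extracts $N:=(\we_i\we_{i+\ell})^\eps M$ with $\eps_i(N)=\eps_{i+\ell}(N)=0$, uses \eqref{functorial} to get a surjection $N\circ L(\wi^\eps)\twoheadrightarrow M$, applies the exact functor $\whd_{\wi^\eps}$, and computes $\whd_{\wi^\eps}\bigl(N\circ L(\wi^\eps)\bigr)\cong N\boxtimes L(\wi^\eps)$ from the Shuffle Lemma (the condition $\eps_i(N)=0=\eps_{i+\ell}(N)$ kills all but the identity-coset piece of the Mackey filtration); thus $\whd_{\wi^\eps}M$ is a quotient of a module already equal to the socle. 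You instead do a dimension count: decompose $\whd_{\wi^\eps}M$ as a vector space into the $\binom{2\eps}{\eps}$ generalized eigenspaces $\Delta_\sigma M$, use the intertwining elements as in Lemma~\ref{2functorsIso} (valid because $\ell>1$ makes $\Phi_k^2$ invertible on the relevant eigenspaces) to equate their dimensions, and identify $\Delta_{i^\eps(i+\ell)^\eps}M\cong(\we_i\we_{i+\ell})^\eps M\boxtimes L(i^\eps)\boxtimes L((i+\ell)^\eps)$ by two applications of the non-degenerate analogue of \cite[Theorem 5.1.6]{Klesh:book}, the second enabled by $\eps_i(\we_{i+\ell}^\eps M)=\eps_i(M)=\eps$ from Lemma~\ref{eijcommutes}. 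The total dimension $\binom{2\eps}{\eps}(\eps!)^2\dim(\we_i\we_{i+\ell})^\eps M$ then coincides with $\dim\bigl((\we_i\we_{i+\ell})^\eps M\boxtimes L(\wi^\eps)\bigr)$, and part (1) forces equality with the socle. Both proofs invoke the hypothesis $\eps_i(M)=\eps_{i+\ell}(M)$ at essentially the same place (for you, so that both eigenspace contractions hit $\eps$ exactly; for the paper, so that $\eps_i(N)=\eps_{i+\ell}(N)=0$). One point worth noting: the paper deliberately avoids transporting \cite[Theorem 5.1.6]{Klesh:book} into the hat setting, whereas your proof uses the ordinary (one-index, non-hat) version of 5.1.6, which does transfer unproblematically; so there is no tension, but the paper's proof is arguably more self-contained within the machinery it builds, while yours is a cleaner direct computation that makes the role of the hypothesis transparent.
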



\begin{proof} 1)  Let $N\boxtimes L(\wi^m)$ be a simple submodule of $\whd_{\wi^m}M$, then by Frobenius reciprocity (\ref{functorial}) and Lemma \ref{imsoclehead}, $N \cong \we_i^m\we_{i+\ell}^mM$. Since $$\begin{aligned}
&\quad\,\dim\Hom_{\HH_{n-2m,2m}}(\we_i^m\we_{i+\ell}^mM\boxtimes L(\wi^m),\whd_{\wi^m}M)\\
&=\dim\Hom_{\HH_{n-2m,2m}}(\we_i^m\we_{i+\ell}^mM\boxtimes L(\wi^m),\res^n_{n-2m,m}M)\\
&=\dim\Hom_{\HH_{n}}((\we_i^m\we_{i+\ell}^mM)\circ L(\wi^m),M)\\
&=\dim\Hom_{\HH_{n}}((\we_i^m\we_{i+\ell}^mM)\triangledown L(\wi^m),M)=1 .  \qquad\text{(as $L(\wi^m)$ is real by Lemma \ref{real1})}
\end{aligned}
$$
This proves that $\soc\whd_{\wi^m}M$ is simple. The second part of 1) follows from Lemma \ref{epswi01}.

2) Suppose $\eps_i(M) = \eps_{i+\ell}(M)$. By Lemma \ref{epswi01}, we can assume $N\boxtimes L(\wi^\eps)$ is a simple submodule of $\whd_{\wi^\eps} M$, where $N\in\Rep_I(\HH_{n-2\eps})$ is simple with $\eps_{\wi}(N)=0$. Applying (\ref{functorial}) and Lemma \ref{imsoclehead}, we can deduce that $M$ is the unique simple head of $N\circ L(\wi^\eps)$ and $\eps_{i}(N)=\eps_i(M)-\eps=\eps_{i+\ell}(M)-\eps=\eps_{i+\ell}(N)$. In particular, $\eps_{i}(N)=0=\eps_{i+\ell}(N)$ by Corollary \ref{KashiwaraOper}, and $\whd_{\wi^\eps}(M)$ is a quotient of $\whd_{\wi^\eps}(N\circ L(\wi^\eps))$. Using the Shuffle Lemma \cite[Lemma 2.4]{GV} and the equality $\eps_{i}(N) = 0= \eps_{i+\ell}(N)$, we get that $\whd_{\wi^\eps}(N\circ L(\wi^\eps))\cong N\boxtimes L(\wi^\eps)$. Thus $\whd_{\wi^\eps} M\cong N\boxtimes L(\wi^\eps)$ is simple and $N\cong (\we_{i}\we_{i+\ell})^{\eps}M$ by the result proved in 1). This proves 2).
\end{proof}

\begin{cor}\label{HatKashiwaraOperators} Let $i,j\in I$ and $M$ be a simple module in $\Rep_I\HH_n$. Suppose $i\not\in\{j\pm 1, j+\ell\pm 1\}$. Then $$
\we_{\widehat{i}}\we_{\widehat{j}}M\cong \we_{\widehat{j}}\we_{\widehat{i}}M, \quad \wf_{\widehat{i}}\wf_{\widehat{j}}M\cong \wf_{\widehat{j}}\wf_{\widehat{i}}M, \quad \we_{\wi}\wf_{\wj} M\cong\wf_{\wj}\we_{\wi} M, \quad \eps_{\wi}(\wf_{\widehat{j}}M)=\eps_{\wi}(M).
$$
\end{cor}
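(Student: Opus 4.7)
The plan is to reduce each assertion to the single-index commutation results of Lemma \ref{eijcommutes}, by unpacking the two-step operators through Corollary \ref{KashiwaraOper}. First, I would dispose of the degenerate cases: if $i=j$ then $\wi=\wj$ and every statement is a tautology, and if $i=j+\ell$ in $I$ then $\wi$ and $\wj$ are the same ordered pair with the entries swapped, so Corollary \ref{KashiwaraOper} (which gives both orderings inside each operator) together with Lemma \ref{uniquesimple} (giving $L(i,i+\ell)\cong L(i+\ell,i)$ inside the definition of $\wf_{\wi}$) yields $\we_{\wi}\cong\we_{\wj}$, $\wf_{\wi}\cong\wf_{\wj}$, and hence the conclusions. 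Thus one may assume $i-j \notin \{0,\ell\}$ in $I$.

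Next I would check the key index-separation claim: under the hypothesis $i\notin\{j\pm1,\,j+\ell\pm1\}$ combined with $i-j\notin\{0,\ell\}$, every pair $(a,b)\in\{i,i+\ell\}\times\{j,j+\ell\}$ satisfies $a-b\notin\{0,\pm1\}$ in $I=\Z/2\ell\Z$. Each case is a one-line verification; for example $(i+\ell)-j\equiv\pm1$ would force $i\equiv j+\ell\pm1$, which is excluded. This is precisely the point at which the shape of the exclusion set in the hypothesis is used, and it is what allows Lemma \ref{eijcommutes} to be invoked for every relevant pair of single-step operators.

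Granting these four separation conditions, I would prove the commutations by expanding each two-step operator via Corollary \ref{KashiwaraOper} and then swapping single-step operators one at a time. For instance,
$$\we_{\wi}\we_{\wj}M \;\cong\; \we_i\we_{i+\ell}\we_j\we_{j+\ell}M \;\cong\; \we_j\we_{j+\ell}\we_i\we_{i+\ell}M \;\cong\; \we_{\wj}\we_{\wi}M,$$
where each intermediate swap is legitimate by one of the four separation conditions and Lemma \ref{eijcommutes}. The arguments for $\wf_{\wi}\wf_{\wj}M\cong\wf_{\wj}\wf_{\wi}M$ and $\we_{\wi}\wf_{\wj}M\cong\wf_{\wj}\we_{\wi}M$ are formally identical, using the $\we\we$, $\wf\wf$, and mixed $\we\wf$ clauses of Lemma \ref{eijcommutes} respectively. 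For the last assertion, I would apply the identity $\eps_{\wi}(N)=\min\{\eps_i(N),\eps_{i+\ell}(N)\}$ from Corollary \ref{KashiwaraOper} to $N=\wf_{\wj}M=\wf_j\wf_{j+\ell}M$, and then use the last clause of Lemma \ref{eijcommutes} twice to push each $\eps_a$ ($a\in\{i,i+\ell\}$) through the two single-step $\wf$'s.

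There is no substantive obstacle. The only bookkeeping point is that $\we_a$ may return zero on a simple module; but Lemma \ref{eijcommutes} already treats this via the shuffle lemma \cite{GV}, so at each step in the chain the two sides are either both isomorphic simples or simultaneously zero, and the reasoning propagates cleanly through the two-step operators.
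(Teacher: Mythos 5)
Your proof is correct and follows the same route the paper takes: reduce each two-step identity to the single-step commutation facts of Lemma \ref{eijcommutes} via the identifications in Corollary \ref{KashiwaraOper} ($\we_{\wi}\cong\we_i\we_{i+\ell}$, $\wf_{\wi}\cong\wf_i\wf_{i+\ell}$, and $\eps_{\wi}=\min\{\eps_i,\eps_{i+\ell}\}$). You are slightly more careful than the printed proof in two places that are worth keeping: you first dispose of the degenerate cases $i=j$ and $i=j+\ell$ (which the hypothesis does not exclude, and for which Lemma \ref{eijcommutes} cannot be invoked directly since it needs $a-b\neq 0$), and you spell out the index-separation check that every $(a,b)\in\{i,i+\ell\}\times\{j,j+\ell\}$ has $a-b\notin\{0,\pm1\}$, which is exactly what the hypothesis $i\notin\{j\pm1,j+\ell\pm1\}$ together with $i-j\notin\{0,\ell\}$ guarantees.
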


\begin{proof} The first three isomorphisms follows from Lemma \ref{eijcommutes} and Corollary \ref{KashiwaraOper}. The fourth equality follows from the fourth equality in Lemma \ref{eijcommutes} by noting that $\eps_{\wi}(M)=\min\{\eps_i(M),\eps_{i+\ell}(M)\}$.
\end{proof}

\begin{lem} Let $i\in I, 0\leq m\leq n/2$. Suppose that $N\in\Rep_I\HH_{n-2m}$ is simple with $\eps=\eps_{\wi}(N)$. Set $M:=N\circ L(\wi^m)$.  Then

1) $K:=\head(M)$ is simple with $\eps_{\wi}(K):=m+\eps$;

2) If $\eps_i(N) = \eps_{i+\ell}(N)$, then all the other composition factors $L$ of $M$ have $\eps_{\wi}(L)<m+\eps$;
\end{lem}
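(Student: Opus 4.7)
The plan is: for (1), identify $K$ as an iterated $\wf_{\wi}$-image of $N$ and compute the relevant $\eps$-values directly; for (2), bootstrap from Lemma \ref{socWhd2}(2) via a Frobenius/Mackey comparison of composition factors inside $\whd_{\wi^{m+\eps}}$.

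For (1), I first observe that $L(\wi^m)\cong L(\wi)^{\circ m}$ by Lemma \ref{uniquesimple}; Lemma \ref{real1} gives that $L(\wi)=L(i,i+\ell)$ is real simple, and Lemma \ref{real3properties}(3) lifts this to $L(\wi^m)$. Thus Lemma \ref{real3properties}(1) guarantees that $M=N\circ L(\wi^m)$ has an irreducible head $K$, while Lemma \ref{imsoclehead} combined with Corollary \ref{KashiwaraOper} identifies it as $K\cong\wf_{\wi}^m N=\wf_i^m\wf_{i+\ell}^m N$. Since $\ell>2$, the residues $i$ and $i+\ell$ are not adjacent, so Lemma \ref{eijcommutes} shows that $\wf_{i+\ell}$ preserves $\eps_i$; combined with the standard identity $\eps_i(\wf_i X)=\eps_i(X)+1$ (immediate from $\we_i\wf_i X\cong X$ and $\eps_i(\we_i Y)=\eps_i(Y)-1$), iteration yields $\eps_i(K)=\eps_i(N)+m$ and symmetrically $\eps_{i+\ell}(K)=\eps_{i+\ell}(N)+m$. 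Corollary \ref{KashiwaraOper} then gives $\eps_{\wi}(K)=\min\{\eps_i(K),\eps_{i+\ell}(K)\}=\eps+m$ as claimed.

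For (2), the assumption $\eps_i(N)=\eps_{i+\ell}(N)$ propagates via (1) to $\eps_i(K)=\eps_{i+\ell}(K)$, so Lemma \ref{socWhd2}(2) applied to $K$ yields that $\whd_{\wi^{m+\eps}}K=P\boxtimes L(\wi^{m+\eps})$ is irreducible, where $P:=(\we_{\wi})^\eps N=(\we_{\wi})^{m+\eps}K$ (using $\we_{\wi}\wf_{\wi}\cong\id$). Applying the exact functor $\whd_{\wi^{m+\eps}}$ (a direct sum of generalized eigenspaces of commuting elements $X_j$) to the surjection $M\twoheadrightarrow K$ produces a surjection $\whd_{\wi^{m+\eps}}M\twoheadrightarrow P\boxtimes L(\wi^{m+\eps})$. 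Statement (2) is equivalent to asserting that this surjection is an isomorphism, i.e.\ that $[\whd_{\wi^{m+\eps}}M]=[P\boxtimes L(\wi^{m+\eps})]$ in the Grothendieck group; by Lemma \ref{DeltaCharacter} every composition factor of $\whd_{\wi^{m+\eps}}M$ has the form $Q\boxtimes L(\wi^{m+\eps})$, and the Shuffle Lemma gives the \emph{a priori} bound $\eps_{\wi}(L)\leq\eps+m$ for every composition factor $L$ of $M$ (since $\eps_i(L(\wi^m))=m=\eps_{i+\ell}(L(\wi^m))$). To rule out $\eps_{\wi}(L)=m+\eps$ for $L\neq K$, I would compute $\dim\Hom(Q\boxtimes L(\wi^{m+\eps}),\whd_{\wi^{m+\eps}}M)$ for a general simple $Q$ via (\ref{functorial}) as $\dim\Hom_{\HH_n}(Q\circ L(\wi^{m+\eps}),N\circ L(\wi^m))$, apply Frobenius a second time to translate it into counting $N\boxtimes L(\wi^m)$-quotients of $\res^n_{n-2m,2m}(Q\circ L(\wi^{m+\eps}))$, and decompose this restriction by Mackey.

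The main obstacle I expect is the Mackey bookkeeping: one has to show that among the Mackey strata indexed by the split of the $2m+2\eps$ coordinates carrying $L(\wi^{m+\eps})$ into a left portion of size $2\eps$ versus a right portion of size $2m$, only the distinguished split gives a nonzero $N\boxtimes L(\wi^m)$-quotient, and this split in turn forces $Q=P$ with multiplicity one. This step relies on Lemma \ref{socWhd2}(2) applied to $N$ itself (which crucially uses $\eps_i(N)=\eps_{i+\ell}(N)$ to produce the irreducibility $\whd_{\wi^\eps}N\cong P\boxtimes L(\wi^\eps)$), combined with the uniqueness statement of Lemma \ref{uniquesimple} for simples in the block $\gamma_{i,m+\eps}$. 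Without the equal-$\eps$ hypothesis, $\whd_{\wi^\eps}N$ can fail to be simple, allowing additional Mackey strata to contribute and producing extra composition factors of $M$ with $\eps_{\wi}$ attaining $m+\eps$; this is precisely why the hypothesis is essential in (2).
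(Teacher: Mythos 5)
Part (1) is correct and essentially reproduces the paper's argument: real simplicity of $L(\wi^m)$ gives the irreducible head, Lemma \ref{imsoclehead} identifies it as $\wf_{\wi}^mN$, and the computation of $\eps_{\wi}(K)$ via $\eps_i$ and $\eps_{i+\ell}$ separately (using Lemma \ref{eijcommutes} for the non-adjacent residue $i+\ell$) is the right way to read the equality $\eps_{\wi}(K)=m+\eps$ even without assuming $\eps_i(N)=\eps_{i+\ell}(N)$.

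Part (2) has a genuine gap. Your reduction to showing that $\whd_{\wi^{m+\eps}}M\cong P\boxtimes L(\wi^{m+\eps})$ is sound --- $\whd_{\wi^{m+\eps}}$ is exact, $\whd_{\wi^{m+\eps}}K$ is irreducible by Lemma \ref{socWhd2}(2), and any composition factor $L\neq K$ of $M$ with $\eps_{\wi}(L)=m+\eps$ would contribute a nonzero piece to $\whd_{\wi^{m+\eps}}M$. But the tool you then deploy --- computing $\dim\Hom(Q\boxtimes L(\wi^{m+\eps}),\whd_{\wi^{m+\eps}}M)$ via two Frobenius reciprocities and a Mackey decomposition --- controls only the \emph{socle} of $\whd_{\wi^{m+\eps}}M$, not its class in the Grothendieck group. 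A module with simple socle $P\boxtimes L(\wi^{m+\eps})$ can perfectly well carry further composition factors of the form $Q\boxtimes L(\wi^{m+\eps})$ sitting above the socle, and any of these would come precisely from a factor $L\neq K$ with $\eps_{\wi}(L)=m+\eps$. To finish along your lines you would have to control the full character of $\whd_{\wi^{m+\eps}}M$, not its socle, and the Mackey bookkeeping you yourself flag as the main obstacle is never carried out. The paper avoids this two-dimensional analysis entirely: it rewrites $M\cong N\circ L(i^m)\circ L((i+\ell)^m)$ and invokes the one-residue statement \cite[Lemma 3.5]{GV} twice --- first to see that every composition factor $L_1$ of $N\circ L(i^m)$ other than $\wf_i^mN$ has $\eps_i(L_1)<\eps+m$, hence anything built from such an $L_1$ by further convolution with $L((i+\ell)^m)$ has $\eps_{\wi}\leq\eps_i<m+\eps$; and second, using $\eps_{i+\ell}(\wf_i^mN)=\eps_{i+\ell}(N)=\eps$ from Lemma \ref{eijcommutes}, to see that every composition factor of $(\wf_i^mN)\circ L((i+\ell)^m)$ other than $K$ has $\eps_{i+\ell}<\eps+m$. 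This split into two independent one-residue estimates is what you should want; your $\whd$-Hom-Mackey route, even if it could be pushed through, would essentially be re-deriving the same character information in a more roundabout way.
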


\begin{proof} 1) By Lemma \ref{real1}, $L(\wi^m)$ is a real simple module. Applying Lemma \ref{real3properties}, we deduce that $K:=\head(M)$ is simple. Using Lemma \ref{imsoclehead} and Corollary \ref{KashiwaraOper}, we see that $K\cong\widetilde{f}_{\wi}^mN$ and $\eps_{\wi}(K):=m+\eps$.

2) By Corollary \ref{KashiwaraOper} and assumption, we have $\eps_i(N) = \eps_{i+\ell}(N)=\eps_{\wi}(N)=\eps$. Applying Lemma \ref{real3properties}, we get a natural surjection $$
M\cong N\circ L(i^m)\circ L((i+\ell)^m)\twoheadrightarrow (N\triangledown L(i^m))\triangledown L((i+\ell)^m)\cong\head(M)
$$
as $M$ has a unique simple head.

Applying Lemma \cite[Lemma 3.5]{GV}, we can deduce that $\eps_{i}(N\triangledown L(i^m))=\eps_{i}(\wf_i^mN)=\eps+m$, while all the other composition factors $L_1$ of $N\circ L(i^m)$ have $\eps_{i}(L_1)<\eps+m$. Hence for those $L_1$, $\eps_{\wi}(L')\leq \eps_i(L')<m+\eps$ where $L'$ is any composition factor of $L_1\circ L((i+\ell)^m)$. Now by Lemma \ref{eijcommutes}, $\eps_{i+\ell}(N\triangledown L(i^m))=\eps_{i+\ell}(N)=\eps$. Applying Lemma \cite[Lemma 3.5]{GV} again, we see that if $L$ is a composition factor of $(N\triangledown L(i^m))\circ L((i+\ell)^m)$ which is not equal to $(N\triangledown L(i^m))\triangledown L((i+\ell)^m)$, then
$\eps_{\wi}\leq \eps_{i+\ell}(L)<m+\eps$. This completes the proof of 2).
\end{proof}

\bigskip

\section{The crystal $\widehat{B}(\infty)$}

In this section we shall give the main results Theorem \ref{mainthm1a} and Theorem \ref{mainthm1b} of this paper. Throughout we assume that $\ell\geq 2$, $q:=\xi\in F$ is a primitive $2\ell$th root of unity in $F$, $I:=\Z/2\ell\Z$.\medskip

We fix an embedding $\theta: \Z/\ell\Z\hookrightarrow I$, $i+\ell\Z\mapsto i+2\ell\Z$ for $i=0,1,2,\cdots,\ell-1$. By some abuse of notations, for any $i\in \Z /\ell\Z$, we set $\wi:=\widehat{\theta(i)}=(\theta(i),\theta(i)+\ell+2\ell\Z)\in I\times I$. If $i_1,\dots,i_n\in\Z/\ell\Z$, then we define
$$
L(\wi_1,\cdots,\wi_n):=L\bigl(\theta(i_1),\theta(i_1)+\ell+2\ell\Z,\cdots,\theta(i_n),\theta(i_n)+\ell+2\ell\Z\bigr).
$$
As a special case of \eqref{simples}, we have $L(\wi_1,\cdots,\wi_n)=\wf_{\wi_n}\cdots\wf_{\wi_1}\mathbf{1}$.


\begin{dfn}\label{infty} We define $$
\widehat{B}(\infty):=\bigl\{L(\wi_1,\cdots,\wi_n)=\wf_{\wi_n}\cdots\wf_{\wi_1}\mathbf{1}\bigm|n\in\N, i_1,\dots,i_n\in\Z/\ell\Z\bigr\},$$
For each $n\in\N$, we define $\widehat{B}(n):=\bigl\{L(\wi_1,\cdots,\wi_n)\bigm|i_1,\cdots,i_n\in \Z/\ell\Z\bigr\}$. In particular, $\widehat{B}(\infty)=\sqcup_{n\in\N}\widehat{B}(n)$.
\end{dfn}

Recall the definition of the automorphism $\tau$ of $\HH_n$ in Definition \ref{sigmatau}.

\begin{prop}\label{prop:invariant}
    $\widehat{B}(n)$ consists of the iso-classes of those simple modules $L$ in $\Rep_I\HH_{2n}$ such that $L^\tau \simeq L$.
\end{prop}

\begin{proof}
    We prove this by induction on $n$. For $n=0$, it is trivial. Assume that $n > 0$ and the result holds for $n-1$. By (\ref{commu1}) and characters consideration we can deduce that $L(i,i+\ell)^{\tau} \simeq L(i,i+\ell)$. For $M \in \widehat{B}(n)$, there exists $N \in \widehat{B}_{n-1}$, s.t. $M = N \triangledown L(i,i+\ell)$ for some $i$ by definition. As $N\simeq N^\tau$ by induction hypothesis, $(N \circ L(i,i+\ell))^{\tau} \simeq N \circ L(i,i+\ell)$. This implies that $M^\tau \simeq M$.

    Conversely, if $M\in\Rep_I\HH_{2n}$ is s.t. $M^\tau\cong M$, then $(e_iM)^\tau \simeq e_{i+\ell}M$ and therefore $\eps_i(M) \neq 0 \Leftrightarrow \eps_{i+\ell}(M) \neq 0$. For such an $i$, $(e_ie_{i+\ell}M)^\tau \simeq e_{i+\ell}e_{i}M \neq 0$, hence $\we_{\wi}M \in \widehat{B}(n-1)$ by induction hypothesis, and $M \in \widehat{B}(n)$.
\end{proof}

The following result is a key step in the proof of our main results Theorem \ref{mainthm1a} and Theorem \ref{mainthm1b} of this paper.

\begin{thm}\label{crystal1} Let $M\in\widehat{B}(n)$. Then \begin{equation}\label{3epsEqual}\eps_{\wi}(M)=\eps_i(M)=\eps_{i+\ell}(M),\end{equation}
and \begin{equation}\label{ClosedKashiwaAction} \wf_{\wi}M\in\widehat{B}(n+1),\quad\, \we_{\wi}M\in\widehat{B}(n-1)\cup\{0\}.\end{equation}
Furthermore, if $n\geq 1$ then $\we_{\wi}M\in\widehat{B}(n-1)$ if and only if $\eps_{\wi}(M)>0$.
\end{thm}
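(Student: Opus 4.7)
The plan is to argue by induction on $n$. For the base case $n=0$ we have $M=\mathbf{1}$, all three epsilons vanish, and $\wf_{\wi}\mathbf{1}=L(\wi)\in\widehat{B}(1)$ by the very definition of $\widehat{B}(1)$. Closedness of $\widehat{B}(\infty)$ under $\wf_{\wi}$ is built into its definition, so the substantive part of the inductive step is to prove the equalities $\eps_{\wi}(M)=\eps_i(M)=\eps_{i+\ell}(M)$ and to establish $\we_{\wi}M\in\widehat{B}(n-1)\cup\{0\}$ together with the criterion $\we_{\wi}M\neq 0\Leftrightarrow \eps_{\wi}(M)>0$. For the inductive step I write $M=\wf_{\wj}N$ with $N=L(\wi_1,\dots,\wi_{n-1})\in\widehat{B}(n-1)$ and $j=i_n\in\Z/\ell\Z$, and split the analysis by the Dynkin relation between $\wi$ and $\wj$.

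If $i=j$, then the hat crystal identity (a consequence of Corollary \ref{KashiwaraOper} combined with the non-degenerate version of \cite[Lemma 5.2.3]{Klesh:book}) gives $\we_{\wi}M=\we_{\wi}\wf_{\wi}N\cong N\in\widehat{B}(n-1)$, while each of $\eps_{\wi}(M),\eps_i(M),\eps_{i+\ell}(M)$ exceeds its value on $N$ by exactly one (for $\eps_i$ and $\eps_{i+\ell}$ one applies Lemma \ref{eijcommutes}, legitimately since $\ell>2$ forces $i$ and $i+\ell$ to be non-adjacent in $I$). The inductive hypothesis on $N$ then yields the three equalities for $M$. If instead $i\neq j$ and $i\notin\{j\pm 1,j+\ell\pm 1\}$ in $I$, Corollary \ref{HatKashiwaraOperators} produces $\we_{\wi}M\cong\wf_{\wj}\we_{\wi}N$ and $\eps_{\wi}(M)=\eps_{\wi}(N)$, while Lemma \ref{eijcommutes} gives $\eps_i(M)=\eps_i(N)$ and $\eps_{i+\ell}(M)=\eps_{i+\ell}(N)$; the inductive hypothesis on $N$ closes both assertions.

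The delicate case is $i\neq j$ with $i\in\{j\pm 1,j+\ell\pm 1\}$ in $I$, i.e.\ $\wi$ is adjacent to $\wj$ in the affine Dynkin diagram. Here the hat analogues Lemmas \ref{Situ}, \ref{Situation1}, \ref{Situation2} and Proposition \ref{2casesb} (which mirror Lemma \ref{techKeyLem1}, Lemma \ref{techKeyLem2} and Proposition \ref{2cases}) provide a canonical normal form for $M$: the natural translation of the labels $L(\ua\vee\underline{ij}^{(k)}j^{\eps'}i^{\eps}j)$ to the two-step hat setting, with each letter replaced by its hat block and each $\underline{ij}$-pair replaced by an $\underline{\wi\wj}$-block. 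The identification $L(\wi,\wj)\cong L(i,j)\circ L(i+\ell,j+\ell)$ from Lemma \ref{widehatLemma}(1), combined with the real-simplicity of $L(\wi^m)\circ L(\wi,\wj)^{\circ k}$ from Lemma \ref{widehatLemma}(2), decouples the hat normal form into a pair of coupled ordinary normal forms interchanged by the involution $c\mapsto c+\ell$. This manifest symmetry forces $\eps_i(M)=\eps_{i+\ell}(M)$, and the formula $\eps_{\wi}(M)=\min\{\eps_i(M),\eps_{i+\ell}(M)\}$ of Corollary \ref{KashiwaraOper} upgrades the common value to $\eps_{\wi}(M)$. The normal form simultaneously realises $\we_{\wi}M$ as $L$ applied to a shortened hat word ending in $\wj$-blocks, hence $\we_{\wi}M\in\widehat{B}(n-1)$. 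The principal obstacle is exactly this last case: the combinatorial bookkeeping doubles because each hat letter is a two-letter block, and one must lift the technical lemmas of Section 5 in a way that preserves the $c\leftrightarrow c+\ell$ symmetry, which is the structural property ultimately responsible for the three epsilon functions agreeing on every $M\in\widehat{B}(\infty)$.
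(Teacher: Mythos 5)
Your overall strategy matches the paper's: induction on $n$, writing $M=\wf_{\wj}N$ with $N\in\widehat{B}(n-1)$, and splitting into cases according to the Dynkin relation between $\wi$ and $\wj$. The first two cases are handled correctly. However, there is a concrete structural gap in your treatment of the adjacent case that needs to be made explicit.

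You invoke Lemmas \ref{Situ}, \ref{Situation1}, \ref{Situation2} and Proposition \ref{2casesb} as if they were available ingredients, but these hat analogues are \emph{not} proved independently of Theorem \ref{crystal1} --- their proofs (by translation of the arguments in Section 5) require \eqref{3epsEqual} and \eqref{ClosedKashiwaAction} in ranks $<n$ as an ingredient. For instance, the mere formation of the label $L(\widehat{\ua}\vee\underline{\wi\wj}^{(k)}\wj^{\eps'}\wi^{\eps}\wj)$ in Proposition \ref{2casesb} and the manipulations in Lemma \ref{Situation1} repeatedly apply operators $\wf_{\wi}, \wf_{\wj}$ to shorter hat words and need to know that the resulting modules stay in $\widehat{B}(\cdot)$ and that the $\eps$-functions are controlled. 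As you have written it, the argument is circular; the paper resolves this by a \emph{simultaneous} induction on $n$: first establish the hat technical lemmas at rank $n$ using the theorem at ranks $<n$, then use those lemmas to push the theorem up to rank $n$. You should make this joint-induction structure explicit, otherwise the ``delicate case'' is not actually proved.

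Two further imprecisions. First, the ``manifest symmetry forces $\eps_i(M)=\eps_{i+\ell}(M)$'' claim is correct in spirit, but the mechanism you describe (``decouples the hat normal form into a pair of coupled ordinary normal forms'') does not literally hold; the clean way to extract the symmetry is via the algebra automorphism $c_{q^\ell}$ of the proof of Lemma \ref{2commutes}, which satisfies $c_{q^\ell}(L(\wi_1,\dots,\wi_n))\cong L(\wi_1,\dots,\wi_n)$ for $\ell>1$ (a consequence only of Lemma \ref{eijcommutes}) and intertwines $\we_i$ with $\we_{i+\ell}$, so that $\eps_i(M)=\eps_{i+\ell}(M)$ for any $M\in\widehat{B}(n)$ without any appeal to the normal form at all. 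This would actually give a shorter route to \eqref{3epsEqual} than the paper's, though \eqref{ClosedKashiwaAction} still needs the normal-form machinery. Second, the statement that ``$\we_{\wi}M$ is $L$ applied to a shortened hat word ending in $\wj$-blocks'' is not accurate: in situation (a) of Proposition \ref{2casesb} the output of Lemma \ref{Situation1} is $M\cong L(\widehat{\ua}\underline{\wi\wj}^{(k+1)}\wj^{\eps'}\wi^{\eps-1})$, so $\we_{\wi}M$ is either $L(\widehat{\ua}\underline{\wi\wj}^{(k+1)}\wj^{\eps'}\wi^{\eps-2})$ when $\eps>1$ (ending in $\wi$-blocks, or $\wj$-blocks if $\eps=2$) or $0$ when $\eps=1$; you need this case distinction to get the final ``$\we_{\wi}M\in\widehat{B}(n-1)$ iff $\eps_{\wi}(M)>0$'' claim, which your sketch omits.
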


\begin{proof}
    $\we_{\wi}M\in\widehat{B}(n-1)\cup\{0\}$ has been proved in the last proposition and $\wf_{\wi}M\in\widehat{B}(n+1)$ follows from definition. To show the equalities \eqref{3epsEqual}, as $\eps_{\wi}(M)=\min\{\eps_i(M),\eps_{i+\ell}(M)\}$, it suffices to show the second one. This follows from the fact that $(e_i^mM)^\tau \simeq e_{i+\ell}^m(M^\tau )$.
\end{proof}

\medskip


\begin{dfn} Let $M\in\Rep_I\HH_n$ be a simple module, $i\in I$ and $0\leq m\leq n$. We define $$\begin{aligned}
&\we_i^*M:=(\we_i(M^\sigma))^\sigma,\quad \wf_i^*(M):=(\wf_i(M^\sigma))^\sigma\cong \head(L(i)\circ M),\\
&\eps_i^*(M):=\eps_i(M^\sigma):=\max\{m\geq 0|(\we_i^*)^mM\neq 0\},\\
&\we_{\wi}^*M:=\we_i^*\we_{i+\ell}^*M,\quad  \wf_{\wi}^*M:=\wf_i^*\wf_{i+\ell}^*M,\\
&{\eps}_{\widehat{i}}^*(M)=\max\bigl\{m\geq 0\bigm|\bigl(\widetilde{e}_{\wi}^*\bigr)^mM\neq 0\bigr\}.
\end{aligned}
$$
\end{dfn}

Let $i\in I$ and $M\in\Rep_I\HH_n$ be a simple module. By the non-degenerate version of \cite[Lemma 10.1.3]{Klesh:book}, we have that $\eps_i^*(\wf_iM)\in\{\eps_i^*(M),\eps_i^*(M)+1\}$, and for any $i\neq j\in I$, $\eps_i^*(\wf_j(M))=\eps_i^*(M)$.

\begin{lem}\label{criterion} Let $i\in I$ and $M\in\Rep_I\HH_n$ be a simple module. The following statements are equivalent: \begin{enumerate}
\item $\eps_i(\wf_i^*M)=\eps_i(M)+1$;
\item $M\circ L(i)\cong L(i)\circ M$;
\item $M\circ L(i)\cong L(i)\circ M$ is simple;
\item $\wf_iM\cong\wf_i^*M$.
\end{enumerate}
\end{lem}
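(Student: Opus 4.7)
\medskip
\noindent\textbf{Proof plan.} The strategy is to establish the implications $(3)\Rightarrow(2)\Rightarrow(4)\Rightarrow(3)$ among the three ``structural'' conditions first, and then handle the numerical condition (1) separately by tying it to (3) via the Shuffle Lemma in one direction and via a known non-degenerate analogue of Kleshchev's criterion in the other.

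First, the equivalences $(2)\Leftrightarrow(3)\Leftrightarrow(4)$ are essentially formal consequences of the theory of real simple modules developed in Section~3. The module $L(i)$ is obviously a real simple $\HH_1$-module since $L(i)\circ L(i)\cong L(i^2)$ is a Kato module (Example~\ref{Katoreal}). Thus Lemma~\ref{real3properties}(2) immediately gives $(2)\Rightarrow(3)$, while $(3)\Rightarrow(2)$ is the reverse direction of the same lemma. For $(3)\Rightarrow(4)$: if $M\circ L(i)$ is simple, then $\wf_iM=\head(M\circ L(i))=M\circ L(i)$, and similarly using (2) we get $\wf_i^*M=\head(L(i)\circ M)=L(i)\circ M\cong M\circ L(i)\cong \wf_iM$. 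For $(4)\Rightarrow(3)$: the hypothesis reads $M\triangledown L(i)\cong L(i)\triangledown M$, so Lemma~\ref{real3properties}(4) (with the real simple module $L(i)$) yields that $M\circ L(i)$ is simple.

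Next, the direction $(3)\Rightarrow(1)$ is straightforward. If $M\circ L(i)$ is simple, then under the identifications established above we have $\wf_i^*M\cong M\circ L(i)$. Writing $\charac M=\sum_{\ua}r_{\ua}[a_1,\dots,a_n]$ and applying the Shuffle Lemma~\cite[Lemma~2.4]{GV}, one reads off immediately that the maximal power of $[i]$ tails occurring in $\charac(M\circ L(i))$ is exactly one more than in $\charac M$, so $\eps_i(\wf_i^*M)=\eps_i(M)+1$.

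The remaining direction $(1)\Rightarrow(4)$ (equivalently $(1)\Rightarrow(3)$) is the main obstacle. Here one cannot argue purely from character considerations because the general inequality $\eps_i(\wf_i^*M)\leq \eps_i(M)+1$ is attained with equality only in special circumstances. The plan is to invoke the non-degenerate version of \cite[Lemma~10.1.3]{Klesh:book}, which is already used implicitly in the paragraph preceding this lemma. More concretely: set $N:=\wf_i^*M$ and assume $\eps_i(N)=\eps_i(M)+1$. Applying $\we_i$, one gets an irreducible quotient map $M\to \we_iN$, whose kernel analysis (combined with the adjunction defining $\wf_i^*$ and the uniqueness of the simple socle of $L(i)\circ M$) forces $L(i)\circ M\cong M\circ L(i)$, which is (2). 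Since $(2)\Rightarrow(3)$ has already been shown, this closes the cycle. The delicate bookkeeping here — showing that the ``extra'' $i$-string of length $\eps_i(M)+1$ in $N$ can only arise if the convolution commutes — is the non-trivial content and is where we rely on the cited non-degenerate analogue; conceptually it mirrors the well-known Grojnowski--Vazirani criterion and has no interaction with the hat-indexing of Sections~4--6, so its application is direct.
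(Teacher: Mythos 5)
Your handling of the structural equivalences $(2)\Leftrightarrow(3)\Leftrightarrow(4)$ is sound and runs parallel to the paper's argument: since $L(i)$ is real simple, Lemma~\ref{real3properties}(2) gives $(2)\Leftrightarrow(3)$, and Lemma~\ref{real3properties}(4) gives $(4)\Rightarrow(3)$; the direction $(3)\Rightarrow(4)$ follows because $\wf_iM$ and $\wf_i^*M$ are the heads of the two isomorphic simple modules. Your $(3)\Rightarrow(1)$ is also fine: once $M\circ L(i)\cong L(i)\circ M$ is simple, it equals $\wf_iM$ and the always-true equality $\eps_i(\wf_iM)=\eps_i(M)+1$ gives $(1)$. (Your explicit Shuffle-Lemma computation of the maximal $i$-tail is a slight detour; the paper simply writes $\eps_i(\wf_i^*M)=\eps_i(\head(L(i)\circ M))=\eps_i(\head(M\circ L(i)))=\eps_i(\wf_iM)=\eps_i(M)+1$.)

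The problem is the direction $(1)\Rightarrow(4)$. First, you cite the wrong tool: the non-degenerate version of \cite[Lemma 10.1.3]{Klesh:book} only gives the dichotomy $\eps_i(\wf_i^*M)\in\{\eps_i(M),\eps_i(M)+1\}$ — knowing that equality can happen says nothing about what follows when it does. What is actually needed is (the non-degenerate version of) \cite[Lemma 10.1.6]{Klesh:book}, which asserts precisely that if $\eps_i(\wf_i^*M)=\eps_i(M)+1$ then $\we_i\wf_i^*M\cong M$; combined with the standard $\we_i\wf_iM\cong M$ and the uniqueness of a simple module with given $\we_i$-image, this forces $\wf_iM\cong\wf_i^*M$, which is $(4)$. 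Second, your sketch in its place is not a proof: the phrase ``one gets an irreducible quotient map $M\to\we_iN$'' is not justified (a priori there is no natural map between $M$ and $\we_iN$; the nontrivial content is exactly that they are isomorphic), and ``kernel analysis combined with the adjunction\ldots forces $L(i)\circ M\cong M\circ L(i)$'' compresses the whole difficulty into a single unargued step. As written, the $(1)\Rightarrow$ implication is the one genuinely substantive direction, and your proposal does not close it; replacing the reference by \cite[Lemma 10.1.6]{Klesh:book} and deducing $\wf_iM\cong\wf_i^*M$ directly from $\we_i\wf_i^*M\cong M$ fixes it cleanly.
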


\begin{proof} We shall prove that $(4)\Rightarrow (3)\Rightarrow (2)\Rightarrow (1)\Rightarrow (4)$. If $\wf_iM\cong\wf_i^*M$, then $M\triangledown L(i)\cong L(i)\triangledown M$. Note that $L(i)$ is a real simple module. Applying Lemma \ref{real3properties}, we can deduce that $M\circ L(i)\cong L(i)\circ M$ is simple. In particular, $$
\eps_i(\wf_i^*M)=\eps_i(\head(L(i)\circ M))=\eps_i(\head(M\circ L(i)))=\eps_i(\wf_iM)=\eps_i(M)+1 .
$$

Conversely, assume that $\eps_i(\wf_i^*M)=\eps_i(\wf_i M)=\eps_i(M)+1$. Applying \cite[Lemma 10.1.6]{Klesh:book}, $\we_i\wf_i^*M\cong M$, which implies that $\wf_iM\cong\wf_i^*M$.
\end{proof}

\begin{lem}\label{2commutes} Let $i\in I$ and $M\in\widehat{B}(n)$. The following statements are equivalent: \begin{enumerate}
\item $M\circ L(i)\cong L(i)\circ M$;
\item $M\circ L(i+\ell)\cong L(i+\ell)\circ M$;
\item $\eps_i(\wf_i^*M)=\eps_i(M)+1$;
\item $\eps_{i+\ell}(\wf_{i+\ell}^*M)=\eps_{i+\ell}(M)+1$.
\end{enumerate}
In this case, we have $\wf_{\wi}^*M\cong\wf_{\wi}M\in\widehat{B}(n+1)$.
\end{lem}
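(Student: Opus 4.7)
The plan is first to apply Lemma \ref{criterion} twice in order to reduce the four-way equivalence to the single equivalence $(1) \Leftrightarrow (2)$, and then to construct a Hecke-algebra symmetry between $i$ and $i+\ell$ that fixes every module in $\widehat{B}(n)$. More precisely, Lemma \ref{criterion} applied to $i$ gives $(1) \Leftrightarrow (3)$, and applied to $i+\ell$ it gives $(2) \Leftrightarrow (4)$, so only $(1) \Leftrightarrow (2)$ and the final assertion remain.

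For the swap between $i$ and $i+\ell$ I would introduce the algebra automorphism $\psi : \HH_n \to \HH_n$ defined on generators by $\psi(T_j) = T_j$ and $\psi(X_k^{\pm 1}) = -X_k^{\pm 1}$. The defining relations (\ref{1})--(\ref{7}) are all preserved (the only point worth checking is (\ref{7}): $\psi(X_{i+1}) = -X_{i+1} = q^{-1} T_i (-X_i) T_i = \psi(q^{-1} T_i X_i T_i)$). Let $\psi^*$ denote the corresponding twisting functor on $\HH_n$-modules; since $-1 = \xi^\ell$, the functor $\psi^*$ shifts each $X_k$-eigenvalue $\xi^a$ to $\xi^{a+\ell}$. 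Hence $\psi^*(L(a)) \cong L(a+\ell)$ for every $a \in I$, while $\psi^*(L(\wi)) \cong L(\wi)$ because the character $[i,i+\ell] + [i+\ell,i]$ of $L(\wi)$ is symmetric in the two eigenvalues and $L(\wi)$ is the unique simple module in its block (Lemma \ref{uniquesimple}). Since $\psi$ restricts to each parabolic subalgebra $\HH_\mu$, the functor $\psi^*$ commutes with induction and with head-formation, hence with every $\wf_{\wj}$.

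A straightforward induction on $n$ then shows that $\psi^*(M) \cong M$ for every $M \in \widehat{B}(n)$: the base case is trivial, and if $M = \wf_{\wj} M'$ with $M' \in \widehat{B}(n-1)$, then $\psi^*(M) \cong \wf_{\wj}\,\psi^*(M') \cong \wf_{\wj} M' = M$. Applying $\psi^*$ to the isomorphism in $(1)$ now yields
\[
M \circ L(i+\ell) \cong \psi^*(M) \circ \psi^*(L(i)) \cong \psi^*(M \circ L(i)) \cong \psi^*(L(i) \circ M) \cong L(i+\ell) \circ M,
\]
which is exactly $(2)$; the reverse implication is symmetric. For the final assertion, once $(1)$ and $(2)$ both hold, the direct chain
\[
M \circ L(\wi) \cong M \circ L(i) \circ L(i+\ell) \cong L(i) \circ M \circ L(i+\ell) \cong L(i) \circ L(i+\ell) \circ M \cong L(\wi) \circ M
\]
together with the fact that $L(\wi)$ is a real simple module (Lemma \ref{real1}, since $i - (i+\ell) = -\ell \notin \{\pm 1\}$ as $\ell > 2$) and Lemma \ref{real3properties}\,(2) show that this common module is simple, hence equal to both of its heads $\wf_{\wi} M$ and $\wf_{\wi}^* M$. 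The membership $\wf_{\wi} M \in \widehat{B}(n+1)$ is immediate from the definition of $\widehat{B}$.

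The construction of $\psi$ itself is painless, and so I expect no serious obstacle; the substantive ingredient is the induction $\psi^*(M) \cong M$ on $\widehat{B}(n)$, which uses crucially that the operator $\wf_{\wj}$ couples $j$ and $j+\ell$ symmetrically. This is exactly why $\widehat{B}$ is the right crystal on which $(1) \Leftrightarrow (2)$ becomes forced.
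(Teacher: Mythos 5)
Your proof is correct and follows essentially the same route as the paper's: the automorphism $\psi$ you define ($T_j\mapsto T_j$, $X_k\mapsto -X_k$) is precisely the paper's $c_{q^\ell}$, since $-1=q^\ell$, and both arguments hinge on the fact that twisting by this automorphism fixes every module in $\widehat{B}(n)$ (the paper cites Lemma~\ref{eijcommutes} directly while you unwind it as an induction on $n$, but this is the same observation). The derivation of the final assertion from realness of $L(\wi)$ via Lemma~\ref{real3properties} also matches the paper's closing computation.
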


\begin{proof} By Lemma \ref{criterion}, it suffices to prove $(1)\Rightarrow (2)$. Suppose that $L(i)\circ M\cong M\circ L(i)$. Let $c_{q^\ell}$ be the automorphism of $\HH_n$ which is defined on the generators by $$
c_{q^\ell}:\quad T_i\mapsto T_i,\quad X_j\mapsto q^{\ell}X_j,
$$
for $i=1,\cdots,n-1, j=1,\cdots,n$.
The automorphism $c_{q^\ell}$ induces a covariant automorphism of categories: $c_{q^\ell}: \Rep_I\HH_n\cong\Rep_I\HH_n$ such that $c_{q^\ell}(L(i_1,\cdots,i_n))\cong L(i_1+\ell,\cdots,i_n+\ell)$ for any $i_1,\cdots,i_n\in I$. Since $\ell>1$, it follows from Lemma \ref{eijcommutes} that
$c_{q^\ell}(L(\wi_1,\cdots,\wi_n))\cong L(\wi_1,\cdots,\wi_n)$ for any $L(\wi_1,\cdots,\wi_n)\in\widehat{B}(n)$. In particular, $c_{q^\ell}(M)\cong M$. Therefore, $$
L(i+\ell)\circ M\cong c_{q^\ell}(L(i))\circ M\cong c_{q^\ell}(L(i)\circ M)\cong c_{q^\ell}(M\circ L(i))=M\circ L(i+\ell) .
$$
In this case, by Lemma \ref{real3properties} $$\begin{aligned}
\wf_{\wi}^*M&=\wf_i^*\wf_{i+\ell}^*M\cong L(i)\circ L(i+\ell)\circ M\\
&\cong M\circ L(i+\ell)\circ L(i)\\
&=\wf_i\wf_{i+\ell}M\\
&\cong \wf_{\wi}M\in\widehat{B}(n+1).
\end{aligned}$$
\end{proof}

\begin{cor}\label{2commutesb} Let $i\in I$ and $M\in\widehat{B}(n)$. The following statements are equivalent: \begin{enumerate}
\item $\eps_i^*(\wf_iM)=\eps_i^*(M)+1$;
\item $M\circ L(i)\cong L(i)\circ M$ is simple;
\item $M\circ L(i+\ell)\cong L(i+\ell)\circ M$;
\item $\eps_{i+\ell}^*(\wf_{i+\ell}M)=\eps_{i+\ell}^*(M)+1$.
\end{enumerate}
\end{cor}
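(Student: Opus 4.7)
The strategy is to reduce Corollary \ref{2commutesb} to Lemma \ref{criterion} and Lemma \ref{2commutes} by transporting everything through the $\sigma$-twist. First I record the basic compatibilities: a direct calculation on generators gives $\sigma^2=\id$ (so $M^{\sigma\sigma}\cong M$ functorially), and since $\sigma$ is the identity on $\HH_1$, we have $L(i)^\sigma\cong L(i)$ for every $i\in I$. Combined with $(A\circ B)^\sigma\cong B^\sigma\circ A^\sigma$, applying $\sigma$ to both sides of a purported isomorphism yields
$$M\circ L(i)\cong L(i)\circ M\,\Longleftrightarrow\,L(i)\circ M^\sigma\cong M^\sigma\circ L(i).$$
From $\wf_i^*X=(\wf_iX^\sigma)^\sigma$ and $\sigma^2=\id$, I also obtain $\wf_i^*(M^\sigma)\cong(\wf_iM)^\sigma$, hence $\eps_i(\wf_i^*(M^\sigma))=\eps_i^*(\wf_iM)$, together with $\eps_i(M^\sigma)=\eps_i^*(M)$ directly from the definition of $\eps_i^*$.

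With these translations in hand, the equivalences $(1)\Longleftrightarrow(2)$ and $(3)\Longleftrightarrow(4)$ of the corollary follow by applying Lemma \ref{criterion} to the irreducible module $M^\sigma$, once with index $i$ and once with index $i+\ell$. Indeed, conditions (1) and (2) of Lemma \ref{criterion}, applied to $M^\sigma$ at index $i$, translate via the dictionary above to our (1) and (2) respectively; the irreducibility clause in (2) is automatic because $L(i)$ is a real simple module, whence Lemma \ref{real3properties}(2) upgrades the isomorphism $M\circ L(i)\cong L(i)\circ M$ to irreducibility. Repeating the argument with $i$ replaced by $i+\ell$ yields $(4)\Longleftrightarrow(3)$. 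Finally, $(2)\Longleftrightarrow(3)$ is precisely the equivalence of clauses (1) and (2) in Lemma \ref{2commutes}, whose hypothesis $M\in\widehat{B}(n)$ is exactly what is assumed here (and whose proof invokes the automorphism $c_{q^\ell}$ acting trivially on objects of $\widehat{B}(n)$).

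There is no serious obstacle: the proof is a purely formal deduction from Lemma \ref{criterion} and Lemma \ref{2commutes} once the $\sigma$-twist bookkeeping is set up. The only points that require attention are the elementary verifications that $\sigma^2=\id$ on $\HH_n$, that $\sigma$ acts as the identity on $\HH_1$-modules (which makes $L(i)^\sigma\cong L(i)$), and that the contravariance relation $(A\circ B)^\sigma\cong B^\sigma\circ A^\sigma$ lets one transport isomorphisms of convolution products in both directions without losing information. All three are either stated in the preceding lemmas or immediate from the definitions.
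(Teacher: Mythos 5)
Your proposal is correct and follows essentially the same route as the paper, which simply asserts that the corollary "follows from Lemma \ref{criterion}, Lemma \ref{2commutes}"; you have just made explicit the $\sigma$-twist bookkeeping (using $\sigma^2=\id$, $L(i)^\sigma\cong L(i)$, $(A\circ B)^\sigma\cong B^\sigma\circ A^\sigma$, and the definitional identities $\eps_i^*(M)=\eps_i(M^\sigma)$, $\wf_i^*(M^\sigma)\cong(\wf_iM)^\sigma$) that converts the unstarred-$\eps$/starred-$\wf$ clauses of Lemma \ref{criterion} applied to $M^\sigma$ into the starred-$\eps$/unstarred-$\wf$ clauses of the corollary. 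This is exactly the intended reduction.
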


\begin{proof} This follows from Lemma \ref{criterion}, Lemma \ref{2commutes}.
\end{proof}

The proof of the following theorem is the same as that of Proposition~\ref{prop:invariant} and we leave it to the readers.

\begin{thm}\label{hatStarEPS} Let $n\in\N$. Then $$\widehat{B}(n)=\{\wf_{\wi_n}^*\cdots\wf_{\wi_1}^*\mathbf{1}|i_1,\cdots,i_n\in\Z/\ell\Z\}.$$ For any $i\in I$ and $M\in\widehat{B}(n)$, $\wf_{\wi}^*M\in\widehat{B}(n+1)$ and $\we_{\wi}^*M\in\widehat{B}(n-1)\cup\{0\}$. Furthermore, $\eps_{\wi}^*(M)=\eps_i^*(M)=\eps_{i+\ell}^*(M)$, and $\we_{\wi}^*M\in\widehat{B}(n-1)$ if and only if $\eps_{\wi}^*(M)>0$.
\end{thm}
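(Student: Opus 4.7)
The plan is to deduce Theorem \ref{hatStarEPS} from Theorem \ref{crystal1} by transporting all assertions across the $\sigma$-twist. First I would verify, using the identity $(A\circ B)^\sigma\cong B^\sigma\circ A^\sigma$ from Section~2 together with the fact that $\sigma$ acts as the identity on $\HH_0$ and $\HH_1$ (so $\mathbf{1}^\sigma\cong\mathbf{1}$ and $L(i)^\sigma\cong L(i)$ for all $i\in I$), that
\[ (\wf_{\wi_n}\cdots\wf_{\wi_1}\mathbf{1})^\sigma\cong \wf_{\wi_n}^*\cdots\wf_{\wi_1}^*\mathbf{1} \]
by a straightforward induction; simultaneously, the definition $\we_i^*M=(\we_i(M^\sigma))^\sigma$ yields $(\we_{\wi}N)^\sigma\cong \we_{\wi}^*(N^\sigma)$ and $\eps_{\wi}(N)=\eps_{\wi}^*(N^\sigma)$, with the analogous equalities for $\eps_i$ and $\eps_{i+\ell}$, for every simple $N$. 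Consequently $\{\wf_{\wi_n}^*\cdots\wf_{\wi_1}^*\mathbf{1}\mid i_1,\ldots,i_n\in\Z/\ell\Z\}=\widehat{B}(n)^\sigma$, where $\widehat{B}(n)^\sigma:=\{M^\sigma\mid M\in\widehat{B}(n)\}$, so the whole theorem reduces to the key claim $\widehat{B}(n)^\sigma=\widehat{B}(n)$. Given this, writing $M=N^\sigma$ with $N\in\widehat{B}(n)$ and applying $\sigma$ to the conclusions of Theorem \ref{crystal1} about $N$ yields $\wf_{\wi}^*M=(\wf_{\wi}N)^\sigma\in\widehat{B}(n+1)$, $\we_{\wi}^*M=(\we_{\wi}N)^\sigma\in\widehat{B}(n-1)\cup\{0\}$, the chain $\eps_{\wi}^*(M)=\eps_{\wi}(N)=\eps_i(N)=\eps_i^*(M)$ (and the $(i+\ell)$-variant), and $\we_{\wi}^*M\neq 0\iff \we_{\wi}N\neq 0\iff \eps_{\wi}(N)>0\iff \eps_{\wi}^*(M)>0$.

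Next I would prove $\widehat{B}(n)^\sigma=\widehat{B}(n)$ by induction on $n$. The cases $n=0,1$ are immediate: $\mathbf{1}^\sigma=\mathbf{1}$, and for $n=1$ the central character $\gamma_{i,1}$ is stable under the swap $(i,i+\ell)\leftrightarrow(i+\ell,i)$, so Lemma \ref{uniquesimple} forces $L(\wi)^\sigma\cong L(\wi)$. For the inductive step, take $M\in\widehat{B}(n)$ and realize $M\cong L(\underline{b})$ with $\underline{b}\in I^{2n}$ in one of the normal forms supplied by Lemmas \ref{Situ}, \ref{Situation1}, \ref{Situation2} or Proposition \ref{2casesb}; then $M^\sigma\cong L(\underline{b}^*)$, where $\underline{b}^*$ denotes $\underline{b}$ read backwards. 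Using the commutation relations of Lemma \ref{eijcommutes} (especially flexible thanks to $\ell>2$), the reality results of Lemma \ref{widehatLemma} and Corollary \ref{realCor}, and the same rewriting techniques as in Lemmas \ref{Situ}--\ref{Situation2} applied to the reversed data (invoking the inductive hypothesis on the ``head'' subsequence $\wua$), I would rewrite $L(\underline{b}^*)$ as an iterated application $\wf_{\wi'_n}\cdots\wf_{\wi'_1}\mathbf{1}$, thus placing $M^\sigma$ inside $\widehat{B}(n)$.

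The hard part will be precisely this last step: the normal forms in Proposition \ref{2casesb} put the ``head'' $\wua$ on the left and the resonant tail on the right, so after $\sigma$-reversal the resonant tail lies on the left and the expression is not visibly of the form $\wf_{\wi_n}\cdots\wf_{\wi_1}\mathbf{1}$. Rearranging the reversed sequence into an allowed normal form will require a careful combination of the commutation relations, the reality identifications $L(\wi,\wj)\cong L(i,j)\circ L(i+\ell,j+\ell)$ etc.\ from Lemma \ref{widehatLemma}, and the Shuffle Lemma to single out the correct composition factor among the finitely many candidates in the relevant block. Once $\widehat{B}(n)^\sigma=\widehat{B}(n)$ is established, the remaining assertions of Theorem \ref{hatStarEPS} are immediate corollaries of Theorem \ref{crystal1} via the $\sigma$-transfer described in the first paragraph.
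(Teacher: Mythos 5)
Your $\sigma$-transfer in the first paragraph is correct: $(\wf_{\wi}N)^\sigma\cong\wf_{\wi}^*(N^\sigma)$ and $(\we_{\wi}N)^\sigma\cong\we_{\wi}^*(N^\sigma)$, with the matching $\eps$-equalities, so the set $\{\wf_{\wi_n}^*\cdots\wf_{\wi_1}^*\mathbf{1}\}$ equals $\widehat{B}(n)^\sigma$, and the whole theorem would follow from Theorem \ref{crystal1} once $\widehat{B}(n)^\sigma=\widehat{B}(n)$ is established. Note, however, that this equality is just the first displayed sentence of Theorem \ref{hatStarEPS} restated, so this is a reformulation rather than a reduction; the burden of proof is undiminished.

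The genuine gap is the step ``then $M^\sigma\cong L(\underline{b}^*)$.'' This is not a formal consequence of anything you cite, and for a general tuple it is false. For instance, with $i-j=\pm 1$ one has $\charac L(i,j,i)=[i,j,i]+2[i,i,j]$ by the Shuffle Lemma, so $L(i,j,i)^\sigma$ has character $[i,j,i]+2[j,i,i]$ and is therefore isomorphic to $L(j,i^2)$, \emph{not} to $L(i,j,i)=L\bigl((i,j,i)^*\bigr)$. In general $L(\underline{b})^\sigma=\wf_{b_m}^*\cdots\wf_{b_1}^*\mathbf{1}$ and $L(\underline{b}^*)=\wf_{b_1}\cdots\wf_{b_m}\mathbf{1}$ are two a priori distinct simple quotients of $L(b_m)\circ\cdots\circ L(b_1)$. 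Restricted to the normal forms of Proposition \ref{2casesb} the identity does seem to hold, but proving it would require a separate induction comparable in length to the theorem itself, and you do not supply it; combined with the ``hard part'' of rewriting $L(\underline{b}^*)$ in $\wf_{\wi}$-form, which you also acknowledge to be unresolved, the proposal does not close.

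For comparison, the paper does not twist by $\sigma$ at all. It first proves $\wf_{\wi}^*M\in\widehat{B}(n+1)$ by induction on $n$, splitting according to the non-degenerate versions of \cite[Lemma 10.1.1, 10.1.2]{Klesh:book}: when some $\eps_j$ jumps, it pins $j\in\{i,i+\ell\}$ and invokes Lemma \ref{2commutes} to get $\wf_{\wi}^*M\cong\wf_{\wi}M\in\widehat{B}(n+1)$; otherwise it feeds the inductive hypothesis into \cite[Lemma 10.1.2]{Klesh:book}. It then handles $\we_{\wi}^*$ and the $\eps^*$-equalities by a second induction driven by the non-degenerate versions of \cite[Lemma 10.1.3, 10.1.4]{Klesh:book} and Corollary \ref{2commutesb}. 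These jump/commutation lemmas are exactly the tools needed to make your $\sigma$-stability argument rigorous, so the apparent shortcut is illusory; if you pursue this route you should formulate and prove a precise $\sigma$-reversal lemma for the normal forms, most naturally by an induction parallel to the one the paper already performs.
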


Using Theorems \ref{crystal1}, \ref{hatStarEPS} and the non-degenerated version of \cite[10.1.1--10.1.7]{Klesh:book} we can immediately get the ``hat" version of all the results in \cite[\S10.1]{Klesh:book}.

\begin{lem}\label{10.1} Let $M\in\widehat{B}(n)$.

1) For any $i\in\Z/\ell\Z$, $\eps_{\wi}(\wf_{\wi}^*M)\in\{\eps_{\wi}(M), \eps_{\wi}(M)+1\}$.

2) For any $i,j\in\Z/\ell\Z$ with $i\neq j$, we have $\eps_{\wi}(\wf_{\wj}^*M)=\eps_{\wi}(M)$.

\end{lem}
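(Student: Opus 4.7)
The plan is to reduce both statements directly to the non-degenerate versions of \cite[Lemmas 10.1.1 and 10.1.3]{Klesh:book} applied to the individual operators $\wf_i^*$ and $\wf_{i+\ell}^*$, via the defining decomposition $\wf_{\wi}^*=\wf_i^*\wf_{i+\ell}^*$, combined with the equalities $\eps_{\wi}(N)=\eps_i(N)=\eps_{i+\ell}(N)$ supplied by Theorem \ref{crystal1} for every $N\in\widehat{B}(m)$.

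For part 1), I would first use Theorem \ref{hatStarEPS} to see that $\wf_{\wi}^*M\in\widehat{B}(n+1)$, so that Theorem \ref{crystal1} applies to both $M$ and $\wf_{\wi}^*M$ and yields $\eps_{\wi}(M)=\eps_i(M)$ and $\eps_{\wi}(\wf_{\wi}^*M)=\eps_i(\wf_{\wi}^*M)$. It therefore suffices to bound $\eps_i(\wf_i^*\wf_{i+\ell}^*M)$. Since $i\neq i+\ell$ in $I$, the non-degenerate version of \cite[Lemma 10.1.3]{Klesh:book} gives $\eps_i(\wf_{i+\ell}^*M)=\eps_i(M)$, and the non-degenerate version of \cite[Lemma 10.1.1]{Klesh:book} then gives $\eps_i(\wf_i^*\wf_{i+\ell}^*M)\in\{\eps_i(\wf_{i+\ell}^*M),\eps_i(\wf_{i+\ell}^*M)+1\}=\{\eps_i(M),\eps_i(M)+1\}$, which is exactly the claim once we re-express everything in terms of $\eps_{\wi}$.

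For part 2), the key observation is that if $i,j\in\Z/\ell\Z$ are distinct, then the four residues $\theta(i),\theta(i)+\ell,\theta(j),\theta(j)+\ell$ in $I=\Z/2\ell\Z$ are pairwise distinct, because the embedding $\theta$ lands in $\{0,1,\dots,\ell-1\}$ and $i\neq j$. Using Theorem \ref{hatStarEPS} to know $\wf_{\wj}^*M\in\widehat{B}(n+1)$ and Theorem \ref{crystal1} to collapse $\eps_{\wi}$ to $\eps_i$ on both sides, the claim becomes $\eps_i(\wf_j^*\wf_{j+\ell}^*M)=\eps_i(M)$, which follows by applying the non-degenerate version of \cite[Lemma 10.1.3]{Klesh:book} twice, first removing $\wf_{j+\ell}^*$ (legal since $i\neq j+\ell$) and then removing $\wf_j^*$ (legal since $i\neq j$).

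The only subtlety, and essentially the one thing one must be careful about, is that Theorem \ref{crystal1} is applicable on both sides of each identity; this is why Theorem \ref{hatStarEPS} is indispensable here, for without it we would not know a priori that $\wf_{\wi}^*M$ or $\wf_{\wj}^*M$ belong to $\widehat{B}(n+1)$, and hence would not be allowed to replace $\eps_{\wi}$ by $\eps_i$ on the left-hand side. Once that hurdle is cleared, the argument is purely combinatorial bookkeeping on the four residues $i,i+\ell,j,j+\ell$ together with two single-index applications of Kleshchev's lemmas, so no further module-theoretic input is needed.
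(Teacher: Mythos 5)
Your proof is correct and matches the paper's intended argument: the paper treats Lemma \ref{10.1} as an immediate consequence of Theorems \ref{crystal1}, \ref{hatStarEPS} and the non-degenerate versions of Kleshchev's \S10.1 lemmas, and you carry out exactly that reduction by decomposing $\wf_{\wi}^*=\wf_i^*\wf_{i+\ell}^*$, using Theorem \ref{hatStarEPS} to guarantee the output stays in $\widehat{B}(\infty)$ so Theorem \ref{crystal1} may replace $\eps_{\wi}$ by $\eps_i$ on both sides, and then invoking the single-index Kleshchev lemmas (with the key observation in part 2 that $\theta(i),\theta(i)+\ell,\theta(j),\theta(j)+\ell$ are pairwise distinct in $I$). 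The only trivial slip is a citation swap: the directly applicable reference for statements of the form $\eps_i(\wf_j^*N)=\eps_i(N)$ is, in the paper's usage, the non-degenerate version of \cite[Lemma 10.1.1]{Klesh:book} rather than \cite[Lemma 10.1.3]{Klesh:book} (the latter concerns $\eps_i^*$ and $\wf_j$), though the two are $\sigma$-dual and hence interchangeable here.
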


\begin{lem}\label{10.2} Let $M\in\widehat{B}(n)$ and $i,j\in\Z/\ell\Z$. Assume that $\eps_{\wi}(\wf_{\wj}^*M)=\eps_{\wi}(M)$, $\eps:=\eps_{\wi}(M)$.
Then $\we_{\wi}^\eps \wf_{\wj}^*M\cong \wf_{\wj}^*\we_{\wi}^\eps M$.
\end{lem}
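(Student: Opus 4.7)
The plan is to induct on $\eps$, with the base case $\eps=0$ being vacuous. For the inductive step from $\eps-1$ to $\eps$, it suffices to establish the $\eps=1$ commutation
\[
\we_{\wi}\wf_{\wj}^{*}M\cong\wf_{\wj}^{*}\we_{\wi}M ,
\]
since $\we_{\wi}M\in\widehat{B}(n-1)$ has $\eps_{\wi}(\we_{\wi}M)=\eps-1$ by Corollary \ref{KashiwaraOper} and Theorem \ref{crystal1}, so the induction hypothesis applies to $\we_{\wi}M$ once the analogue of the hypothesis, namely $\eps_{\wi}(\wf_{\wj}^{*}\we_{\wi}M)=\eps-1$, is verified. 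That verification follows directly from Lemma \ref{10.1}: when $\wj\neq\wi$ it is part (2); when $\wj=\wi$ it comes from part (1) together with the observation that our standing assumption puts us in the equal case of the dichotomy (otherwise $\eps_{\wi}(\wf_{\wi}^{*}M)$ would jump to $\eps+1$).

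For the $\eps=1$ statement, set $N:=\we_{\wi}M$, so by Corollary \ref{KashiwaraOper} and Lemma \ref{imsoclehead} the module $M$ is the unique simple head of $N\circ L(\wi)$; here Lemma \ref{widehatLemma} guarantees that $L(\wi)$ is a real simple module. Let $P:=\wf_{\wj}^{*}N$ and note that our hypothesis $\eps_{\wi}(\wf_{\wj}^{*}M)=1$ together with Proposition \ref{RedBao} and \eqref{weEpsHat1} guarantee that $\we_{\wi}\wf_{\wj}^{*}M$ is nonzero and irreducible. I would then identify $\we_{\wi}\wf_{\wj}^{*}M\cong P$ by applying the Frobenius adjunction in \eqref{functorial} with $m=1$ to produce a nonzero morphism $P\circ L(\wi)\to\wf_{\wj}^{*}M$, built from the composite of the surjections $L(\wj)\circ N\twoheadrightarrow P$ and $L(\wj)\circ M\cong L(\wj)\circ (N\triangledown L(\wi))\twoheadrightarrow\wf_{\wj}^{*}M$, and then using the real simplicity of $L(\wi)$ and Lemma \ref{real3properties} to conclude both sides are irreducible and isomorphic.

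The main obstacle I expect is the case $\wj=\wi$, where $\wf_{\wj}^{*}=\wf_{\wi}^{*}$ and $\we_{\wi}$ must be commuted past $\wf_{\wi}^{*}$ itself. By Lemma \ref{10.1}(1), $\eps_{\wi}(\wf_{\wi}^{*}M)\in\{\eps,\eps+1\}$, and our hypothesis forces the non-jumping case; by Lemma \ref{2commutes} and Corollary \ref{2commutesb} this is equivalent to $L(\wi)\circ M\not\cong M\circ L(\wi)$, so the factors cannot simply be swapped. The workaround will be to analyse the composition factors of $L(\wi)\circ L(\wj)\circ N$ via a short exact sequence in the spirit of Lemma \ref{widehatLemma}(3) and track characters using the Shuffle Lemma \cite[Lemma 2.4]{GV}, mirroring the inductive case analysis carried out in Section 5, in order to isolate $P$ as the unique composition factor of $\we_{\wi}(\wf_{\wi}^{*}M)$ with $\eps_{\wi}$-value zero.
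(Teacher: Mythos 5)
Your inductive argument takes a genuinely different route from the paper. The paper treats Lemma~\ref{10.2} as an immediate consequence of the non-degenerate version of \cite[Lemma 10.1.4]{Klesh:book}: since Theorems~\ref{crystal1} and~\ref{hatStarEPS} give $\eps_{\wi}=\eps_i=\eps_{i+\ell}$ on $\widehat{B}(\infty)$, one writes $\wf_{\wj}^*=\wf_j^*\wf_{j+\ell}^*$ and $\we_{\wi}^\eps=\we_i^\eps\we_{i+\ell}^\eps$ (via Lemma~\ref{efcommutes}) and commutes the operators one ordinary index at a time, the hat hypothesis supplying each of the single-index non-jump hypotheses. Your plan avoids that untangling, but as written it has two gaps. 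In the reduction step, when $\wj=\wi$ you need $\eps_{\wi}(\wf_{\wi}^*\we_{\wi}M)=\eps-1$, and your justification is the parenthetical ``otherwise $\eps_{\wi}(\wf_{\wi}^*M)$ would jump to $\eps+1$'' --- i.e., you assert that a jump at $\we_{\wi}M$ would force a jump at $M$. This is true but it is not a consequence of Lemma~\ref{10.1} alone; it needs an argument. (One that works: a jump at $\we_{\wi}M$ gives, via Lemma~\ref{2commutes}, $L(\wi)\circ\we_{\wi}M\cong\we_{\wi}M\circ L(\wi)$ irreducible and hence $\cong M$; since $L(i)\circ L(\wi)\cong L(\wi)\circ L(i)$ --- both are $L(i^2)\circ L(i+\ell)$ --- this yields $L(i)\circ M\cong M\circ L(i)$, the jump at $M$.) Omitting it leaves the $\wj=\wi$ branch of the reduction unsupported.

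The more serious gap is the $\eps=1$ step itself. The adjunction~\eqref{functorial} cannot ``produce'' the nonzero morphism $P\circ L(\wi)\to\wf_{\wj}^*M$; it only transfers a morphism you already have. Your composite of surjections actually yields a surjection $L(\wj)\circ N\circ L(\wi)\twoheadrightarrow\wf_{\wj}^*M$, and for this to descend to a map out of $P\circ L(\wi)$ you must check that it kills $K\circ L(\wi)$, where $K=\Ker\bigl(L(\wj)\circ N\twoheadrightarrow P\bigr)$. That descent is exactly where the standing hypothesis $\eps_{\wi}(\wf_{\wj}^*M)=\eps_{\wi}(M)$ must enter; it is not a formal consequence of what you have written. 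You yourself flag $\wj=\wi$ as ``the main obstacle'' and offer only a sketch of a ``workaround'' rather than carrying it out, so the central step of the induction is left open. Until the descent (or a substitute such as the paper's index-by-index reduction to the single-index lemma) is actually established, the proposal does not prove the statement.
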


\begin{lem}\label{10.3} Let $M\in\widehat{B}(n)$.

1) For any $i\in\Z/\ell\Z$, $\eps_{\wi}^*(\wf_{\wi}M)\in\{\eps_{\wi}^*(M), \eps_{\wi}^*(M)+1\}$.

2) For any $i,j\in\Z/\ell\Z$ with $i\neq j$, we have $\eps_{\wi}^*(\wf_jM)=\eps_{\wi}^*(M)$.

\end{lem}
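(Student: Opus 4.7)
The plan is to reduce both statements to Lemma \ref{10.1} by exploiting the $\sigma$-duality that defines the starred operators. The preliminary observation carrying the representation-theoretic content is that $\widehat{B}(n)$ is stable under the involution $(-)^\sigma$. Indeed, by Theorem \ref{hatStarEPS} every $M\in\widehat{B}(n)$ can be written as $M=\wf_{\wi_n}^*\cdots\wf_{\wi_1}^*\mathbf{1}$ for some $i_1,\dots,i_n\in\Z/\ell\Z$. Since $\sigma^2=\id$ and $\mathbf{1}^\sigma\cong\mathbf{1}$, the definition $\wf_{\wi}^*=\wf_i^*\wf_{i+\ell}^*$ together with $\wf_i^*(-)=(\wf_i((-)^\sigma))^\sigma$ yields the functorial identity
\[
\bigl(\wf_{\wi}^*N\bigr)^\sigma\cong\wf_{\wi}(N^\sigma)
\]
for every simple $N$ and every $i\in\Z/\ell\Z$; iterating this along the chain $\mathbf{1}\mapsto\wf_{\wi_1}^*\mathbf{1}\mapsto\cdots\mapsto M$ produces $M^\sigma\cong\wf_{\wi_n}\cdots\wf_{\wi_1}\mathbf{1}\in\widehat{B}(n)$ by the very definition of $\widehat{B}(n)$.

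With the $\sigma$-stability of $\widehat{B}(n)$ in hand, both claims follow by direct dualization. For part 1), the dual identity $(\wf_{\wi}M)^\sigma\cong\wf_{\wi}^*(M^\sigma)$ combined with the defining equation $\eps_{\wi}^*(-)=\eps_{\wi}((-)^\sigma)$ gives
\[
\eps_{\wi}^*(\wf_{\wi}M)=\eps_{\wi}\bigl(\wf_{\wi}^*(M^\sigma)\bigr),
\]
and applying Lemma \ref{10.1}.1 to $M^\sigma\in\widehat{B}(n)$ places this value in $\{\eps_{\wi}(M^\sigma),\eps_{\wi}(M^\sigma)+1\}=\{\eps_{\wi}^*(M),\eps_{\wi}^*(M)+1\}$. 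For part 2), understood in its natural hat form parallel to Lemma \ref{10.1}.2, the same calculation with $\wj$ replacing the inner $\wi$ together with Lemma \ref{10.1}.2 gives $\eps_{\wi}^*(\wf_{\wj}M)=\eps_{\wi}(\wf_{\wj}^*(M^\sigma))=\eps_{\wi}(M^\sigma)=\eps_{\wi}^*(M)$.

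The only genuinely nontrivial ingredient is the $\sigma$-stability of $\widehat{B}(n)$, which in turn rests on the two parallel descriptions of $\widehat{B}(n)$ (via $\wf_{\wi}^*$ and via $\wf_{\wi}$) supplied by Theorem \ref{hatStarEPS}; this is the place where the real work of earlier sections is invoked. Every remaining step is a purely formal transport through the involution $\sigma$, so once Theorem \ref{hatStarEPS} is available the argument concludes without further case analysis or calculation.
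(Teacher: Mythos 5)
Your proof is correct and constitutes a genuine (though closely dual) alternative to the paper's approach. The paper treats Lemmas \ref{10.1}--\ref{10.7} uniformly: it reduces each hat-statement $\eps_{\wi}^*(\cdot)$ to the ordinary quantities $\eps_{\theta(i)}^*$ and $\eps_{\theta(i)+\ell}^*$ via the equalities $\eps_{\wi}^*(M)=\eps_i^*(M)=\eps_{i+\ell}^*(M)$ from Theorem \ref{hatStarEPS}, and then invokes the non-degenerate analogues of Kleshchev's \cite[Lemmas 10.1.1--10.1.7]{Klesh:book} directly. You instead take Lemma \ref{10.1} as already established and deduce Lemma \ref{10.3} from it by transport through $\sigma$, using the functorial identities $(\wf_{\wi}^*N)^\sigma\cong\wf_{\wi}(N^\sigma)$, $\we_{\wi}^*N\cong(\we_{\wi}(N^\sigma))^\sigma$ (hence $\eps_{\wi}^*(N)=\eps_{\wi}(N^\sigma)$) and the $\sigma$-stability of $\widehat{B}(n)$, which you correctly derive from the double description of $\widehat{B}(n)$ in Theorem \ref{hatStarEPS}. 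Both routes lean on Theorem \ref{hatStarEPS} as the essential input; yours has the advantage of exhibiting the precise sense in which the starred and unstarred hat-lemmas are formal duals of one another once $\widehat{B}(n)$ is known to be $\sigma$-stable, and it avoids re-running the translation from Kleshchev's non-hat statements a second time. One small remark: the equality $\eps_{\wi}^*(N)=\eps_{\wi}(N^\sigma)$ is not literally the paper's definition of $\eps_{\wi}^*$ (which is via $\we_{\wi}^*$), so it deserves the one-line verification you effectively carry out; likewise you are right that part 2) of Lemma \ref{10.3} should be read with $\wf_{\wj}$ rather than $\wf_j$, as Lemma \ref{10.4} confirms.
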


\begin{lem}\label{10.4} Let $M\in\widehat{B}(n)$ and $i,j\in\Z/\ell\Z$. Assume that $\eps_{\wi}^*(\wf_{\wj}M)=\eps_{\wi}^*(M)$, $a:=\eps_{\wi}^*(M)$.
Then $(\we_{\wi}^*)^a \wf_{\wj}M\cong \wf_{\wj}(\we_{\wi}^*)^a M$.
\end{lem}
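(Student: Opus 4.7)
My plan is to deduce Lemma \ref{10.4} from the already-established Lemma \ref{10.2} by applying the involutive autoequivalence $(-)^\sigma$ of $\Rep_I\HH_n$; the two statements are exact $\sigma$-duals of one another, with the roles of the starred and unstarred operators interchanged.

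First I would record the compatibility facts for $\sigma$. Direct inspection of generators shows $\sigma^2=\mathrm{id}$, so $(-)^\sigma$ is an involutive autoequivalence and in particular preserves heads. Every one-dimensional module $L(i)$ is clearly fixed by $\sigma$, and by Lemma \ref{uniquesimple} the module $L(\wi)=L(i,i+\ell)$ is the unique simple object in its block; since the $X$-eigenvalue multiset is preserved by $\sigma$, it follows that $L(\wi)^\sigma\cong L(\wi)$. Combining these with $(M\circ N)^\sigma\cong N^\sigma\circ M^\sigma$ gives
\begin{equation*}
(\wf_{\wj}M)^\sigma\cong\wf_{\wj}^*(M^\sigma),\qquad ((\we_{\wi}^*)^aM)^\sigma\cong\we_{\wi}^a(M^\sigma),\qquad \eps_{\wi}^*(M)=\eps_{\wi}(M^\sigma),
\end{equation*}
the last being the definition. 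Iterating the first identity and using $\mathbf{1}^\sigma=\mathbf{1}$, any $M=\wf_{\wi_n}\cdots\wf_{\wi_1}\mathbf{1}\in\widehat{B}(n)$ satisfies $M^\sigma\cong\wf_{\wi_n}^*\cdots\wf_{\wi_1}^*\mathbf{1}$, which lies in $\widehat{B}(n)$ by Theorem \ref{hatStarEPS}.

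With these facts, set $M':=M^\sigma\in\widehat{B}(n)$. The hypothesis $\eps_{\wi}^*(\wf_{\wj}M)=\eps_{\wi}^*(M)$ translates, via the three compatibilities above, to $\eps_{\wi}(\wf_{\wj}^*M')=\eps_{\wi}(M')=a$, which is exactly the hypothesis of Lemma \ref{10.2} applied to $M'$. That lemma therefore yields
\begin{equation*}
\we_{\wi}^a\wf_{\wj}^*M'\cong\wf_{\wj}^*\we_{\wi}^aM'.
\end{equation*}
Applying $(-)^\sigma$ to both sides, using $(M')^\sigma=M$ together with the compatibility identities, the left side unwinds to $(\we_{\wi}^*)^a\wf_{\wj}M$ and the right side to $\wf_{\wj}(\we_{\wi}^*)^aM$, which is the desired isomorphism. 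The only real point of care is verifying $L(\wi)^\sigma\cong L(\wi)$ and consequently $M^\sigma\in\widehat{B}(n)$; once these are in hand the argument is a purely formal transfer across the $\sigma$-duality, so I do not expect any substantive obstacle.
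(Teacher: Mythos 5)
Your proof is correct. The paper itself gives no explicit argument for Lemmas \ref{10.1}--\ref{10.7}, instead asserting that they follow ``immediately'' from Theorems \ref{crystal1}, \ref{hatStarEPS} and the non-degenerate versions of \cite[10.1.1--10.1.7]{Klesh:book}; your $\sigma$-duality reduction makes explicit exactly the kind of transfer that is implicit in that remark, so the approach is essentially the intended one. The key compatibilities you invoke all check out: from the definitions $\we_i^*M=(\we_i(M^\sigma))^\sigma$, $\wf_i^*M=(\wf_i(M^\sigma))^\sigma$ together with $\sigma^2=\id$ and Corollary \ref{KashiwaraOper} (which identifies $\we_{\wi}$ with $\we_i\we_{i+\ell}$ and $\wf_{\wi}$ with $\wf_i\wf_{i+\ell}$), one gets $\we_{\wi}^*M\cong(\we_{\wi}(M^\sigma))^\sigma$ and $\wf_{\wi}^*M\cong(\wf_{\wi}(M^\sigma))^\sigma$, whence the three displayed identities; $M^\sigma\in\widehat{B}(n)$ follows from Theorem \ref{hatStarEPS} as you note; the hypothesis $\eps_{\wi}^*(\wf_{\wj}M)=\eps_{\wi}^*(M)=a$ translates verbatim into the hypothesis of Lemma \ref{10.2} for $M^\sigma$; and twisting the conclusion of Lemma \ref{10.2} back by $\sigma$ gives exactly the desired isomorphism. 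One small redundancy: you do not actually need to verify $L(\wi)^\sigma\cong L(\wi)$ or to appeal to $(M\circ N)^\sigma\cong N^\sigma\circ M^\sigma$, since the identities $(\wf_{\wj}M)^\sigma\cong\wf_{\wj}^*(M^\sigma)$ etc.\ drop directly out of the definitions of the starred operators; but this does no harm. The only logical prerequisite is that Lemma \ref{10.2} has been established independently of Lemma \ref{10.4}, which is the natural order of dependency and consistent with the paper's numbering, so there is no circularity.
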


\begin{cor}\label{10.5} Let $M\in\widehat{B}(n)$ and $i\in\Z/\ell\Z$. Let $M_1:=\we_{\wi}^{\eps_{\wi}(M)}M$, $M_2:=(\we^*_{\wi})^{\eps^*_{\wi}(M)}$. Then
$\eps_{\wi}^*(M)=\eps_{\wi}(M_1)$ if and only if $\eps_{\wi}(M)=\eps_{\wi}(M_2)$.
\end{cor}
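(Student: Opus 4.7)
The plan is to reduce the corollary to iterated applications of the commutation lemmas \ref{10.2} and \ref{10.4}, using a simple monotonicity observation as scaffolding. Writing $\eps:=\eps_{\wi}(M)$ and $a:=\eps_{\wi}^{*}(M)$, I note that $M=\wf_{\wi}^{\eps}M_{1}=(\wf_{\wi}^{*})^{a}M_{2}$, with $\eps_{\wi}(M_{1})=0$ and $\eps_{\wi}^{*}(M_{2})=0$. Corollary \ref{KashiwaraOper} (together with its $\sigma$-twist) gives $\eps_{\wi}(\wf_{\wi}N)=\eps_{\wi}(N)+1$ and $\eps_{\wi}^{*}(\wf_{\wi}^{*}N)=\eps_{\wi}^{*}(N)+1$ for every simple $N$; feeding these identities into Lemmas \ref{10.1}(1) and \ref{10.3}(1) (applied to $\we_{\wi}^{*}N$ and $\we_{\wi}N$ respectively) yields the two monotonicity statements
\begin{equation*}
\eps_{\wi}(\we_{\wi}^{*}N)\le\eps_{\wi}(N),\qquad \eps_{\wi}^{*}(\we_{\wi}N)\le\eps_{\wi}^{*}(N),
\end{equation*}
valid whenever the left-hand sides are nonzero. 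These drive the whole argument.

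To prove the direction $\eps_{\wi}(M_{2})=\eps\Rightarrow\eps_{\wi}^{*}(M_{1})=\eps_{\wi}^{*}(M)$, I would first observe that along the chain $M_{2},\wf_{\wi}^{*}M_{2},\dots,(\wf_{\wi}^{*})^{a}M_{2}=M$, Lemma \ref{10.1}(1) makes the sequence of $\eps_{\wi}$-values weakly non-decreasing; by assumption its two endpoints are both $\eps$, so $\eps_{\wi}$ is constantly $\eps$ on the whole chain. This certifies the hypothesis of Lemma \ref{10.2} (with $j=i$) at every step, so iterating produces the commutation $M_{1}=\we_{\wi}^{\eps}M\cong(\wf_{\wi}^{*})^{a}\we_{\wi}^{\eps}M_{2}$. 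Since $\wf_{\wi}^{*}$ raises $\eps_{\wi}^{*}$ by exactly $1$, this rewrites as $\eps_{\wi}^{*}(M_{1})=a+\eps_{\wi}^{*}(\we_{\wi}^{\eps}M_{2})$, and the second monotonicity inequality above, applied along $M_{2},\we_{\wi}M_{2},\dots,\we_{\wi}^{\eps}M_{2}$ with starting value $\eps_{\wi}^{*}(M_{2})=0$, forces $\eps_{\wi}^{*}(\we_{\wi}^{\eps}M_{2})=0$, giving $\eps_{\wi}^{*}(M_{1})=a$ as wanted.

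The converse is the mirror argument: I would use Lemma \ref{10.3}(1) along $M_{1},\wf_{\wi}M_{1},\dots,\wf_{\wi}^{\eps}M_{1}=M$ to show $\eps_{\wi}^{*}$ is identically $a$ on this chain, invoke Lemma \ref{10.4} iteratively with $j=i$ to obtain $M_{2}\cong\wf_{\wi}^{\eps}(\we_{\wi}^{*})^{a}M_{1}$, and finally combine $\eps_{\wi}(\wf_{\wi}N)=\eps_{\wi}(N)+1$ with $\eps_{\wi}((\we_{\wi}^{*})^{a}M_{1})=0$ (the latter following from $\eps_{\wi}(M_{1})=0$ and the first monotonicity inequality) to conclude $\eps_{\wi}(M_{2})=\eps$.

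The main obstacle is purely bookkeeping: in both directions the nontrivial work lies in certifying that, at every intermediate step of the relevant chain, the hypothesis of Lemma \ref{10.2} or \ref{10.4} (equality of the relevant $\eps$-function before and after applying $\wf_{\wi}^{*}$ or $\wf_{\wi}$) actually holds. This is the only place where the monotonicity-plus-matching-endpoints trick is used; once that preparatory step is in place, the rest of the proof collapses to a direct unwinding of the iterated commutations together with the two identities $\eps_{\wi}(\wf_{\wi}N)=\eps_{\wi}(N)+1$ and $\eps_{\wi}^{*}(\wf_{\wi}^{*}N)=\eps_{\wi}^{*}(N)+1$.
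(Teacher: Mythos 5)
Your argument is correct and reproduces, in the hat setting, the proof of Kleshchev's Corollary 10.1.5, which is exactly what the paper invokes for Corollary \ref{10.5} (the paper does not spell out a proof, deferring to \cite[\S 10.1]{Klesh:book}); the chain-monotonicity observation (weakly non-decreasing by Lemma \ref{10.1}(1) resp.\ Lemma \ref{10.3}(1), hence constant once the endpoints agree) is precisely what certifies the one-step hypotheses of Lemma \ref{10.2} resp.\ Lemma \ref{10.4} at every rung, and the remaining bookkeeping is as you describe. (You implicitly corrected the typo in the statement, reading $\eps_{\wi}^*(M)=\eps_{\wi}^*(M_1)$ and $M_2:=(\we_{\wi}^*)^{\eps_{\wi}^*(M)}M$, which is indeed the intended formulation.)
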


\begin{lem}\label{10.6} Let $M\in\widehat{B}(n)$ and $i\in\Z/\ell\Z$ satisfying $\eps_{\wi}(\wf_{\wi}^*M)=\eps_{\wi}(M)+1$. Then
$\we_{\wi}\wf_{\wi}^*(M)\cong M$.
\end{lem}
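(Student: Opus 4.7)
The plan is to reduce the desired identity to the non-``hat'' statement $\we_{\wi}\wf_{\wi}M\cong M$, which is already contained in Corollary \ref{KashiwaraOper}, by showing that under the hypothesis one in fact has the coincidence $\wf_{\wi}^*M\cong \wf_{\wi}M$.

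First I would translate the hypothesis $\eps_{\wi}(\wf_{\wi}^*M)=\eps_{\wi}(M)+1$ into a statement about the ordinary residue functions $\eps_i$ and $\eps_{i+\ell}$. By Theorem \ref{hatStarEPS} we have $\wf_{\wi}^*M\in\widehat{B}(n+1)$, so Theorem \ref{crystal1} gives $\eps_{\wi}(\wf_{\wi}^*M)=\eps_i(\wf_{\wi}^*M)=\eps_{i+\ell}(\wf_{\wi}^*M)$, and the same theorem applied to $M$ yields $\eps_{\wi}(M)=\eps_i(M)=\eps_{i+\ell}(M)$. Therefore the hypothesis becomes $\eps_{i+\ell}(\wf_{\wi}^*M)=\eps_{i+\ell}(M)+1$.

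Next I would peel off the composition $\wf_{\wi}^*M=\wf_i^*\wf_{i+\ell}^*M$ using the ``hat'' analogue of the non-degenerate version of \cite[Lemma 10.1.1]{Klesh:book}: since $i\neq i+\ell$ we have $\eps_{i+\ell}(\wf_i^*N)=\eps_{i+\ell}(N)$ for every irreducible $N$, hence $\eps_{i+\ell}(\wf_{\wi}^*M)=\eps_{i+\ell}(\wf_{i+\ell}^*M)$. Combined with the previous step this forces $\eps_{i+\ell}(\wf_{i+\ell}^*M)=\eps_{i+\ell}(M)+1$, which is precisely condition (4) of Lemma \ref{2commutes}. Lemma \ref{2commutes} then yields the crucial isomorphism $\wf_{\wi}^*M\cong \wf_{\wi}M$, and applying $\we_{\wi}$ together with $\we_{\wi}\wf_{\wi}M\cong M$ from Corollary \ref{KashiwaraOper} concludes the proof.

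I do not expect a serious obstacle: the argument is essentially a ``hat'' rephrasing of a standard identity, once Theorem \ref{crystal1} is available to collapse the two-step function $\eps_{\wi}$ to the ordinary $\eps_i$ and $\eps_{i+\ell}$ on elements of $\widehat{B}(\cdot)$. The only mildly delicate point is citing the correct form of \cite[Lemma 10.1.1]{Klesh:book} for $\wf_i^*$ rather than $\wf_i$; this dual version is implicit in, and completely parallel to, the $\eps_i^*$ statement already invoked earlier in the section.
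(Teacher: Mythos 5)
Your proposal is correct and closes the argument efficiently. The paper itself does not spell out a proof for Lemmas \ref{10.1}--\ref{10.7}, saying only that they follow from Theorems \ref{crystal1}, \ref{hatStarEPS} and the non-degenerate versions of \cite[Lemmas 10.1.1--10.1.7]{Klesh:book}; your route is a legitimate and rather clean instantiation of that hint. The chain you set up — translating the hypothesis via Theorem \ref{crystal1} into $\eps_{i+\ell}(\wf_{\wi}^*M)=\eps_{i+\ell}(M)+1$, stripping off $\wf_i^*$ to reach $\eps_{i+\ell}(\wf_{i+\ell}^*M)=\eps_{i+\ell}(M)+1$, recognizing this as condition (4) of Lemma \ref{2commutes} to get $\wf_{\wi}^*M\cong\wf_{\wi}M$, and then invoking Corollary \ref{KashiwaraOper} — is sound, and there is no circularity: Lemma \ref{2commutes} rests on Lemma \ref{criterion}, which in turn uses the non-hat Kleshchev 10.1.6, while Theorem \ref{hatStarEPS} (used to place $\wf_{\wi}^*M$ in $\widehat{B}(n+1)$) is established earlier. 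In contrast, a direct ``hat'' rewrite of Kleshchev's proof of 10.1.6 would redo more work from scratch; your reduction to the non-star operator via $\wf_{\wi}^*M\cong\wf_{\wi}M$ is the shorter path.

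One small terminological slip: the fact $\eps_{i+\ell}(\wf_i^*N)=\eps_{i+\ell}(N)$ (for $i+\ell\neq i$, every irreducible $N$) is simply the plain non-degenerate version of \cite[Lemma 10.1.1]{Klesh:book} applied to the residues $i+\ell$ and $i$ — it is not its ``hat'' analogue (that would be a statement about $\eps_{\wi}$ and $\wf_{\wj}^*$, i.e.\ Lemma \ref{10.1}(2)). The citation number and the asserted fact are both correct, so this does not affect the validity of the argument.
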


\begin{lem}\label{10.7} Let $M\in\widehat{B}(n)$ and $i\in\Z/\ell\Z$ satisfying $\eps_{\wi}^*(\wf_{\wi}M)=\eps_{\wi}^*(M)+1$. Then
$\we_{\wi}^*\wf_{\wi}(M)\cong M$.
\end{lem}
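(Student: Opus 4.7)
The plan is to reduce the claim to Lemma \ref{10.6} by twisting with the anti-commuting-with-everything automorphism $\sigma$. A direct check on the defining generators gives $\sigma^2=\id$, so $N^{\sigma\sigma}\cong N$ for every finite dimensional $\HH_n$-module $N$. From the very definitions of the starred operators, together with the facts that $\sigma$ induces an autoequivalence of $\Rep_I\HH_n$ (hence preserves socles, heads and irreducibility) and that $L(i)^\sigma\cong L(i)$, one obtains the $\sigma$-intertwining relations
\begin{equation*}
(\wf_i N)^\sigma \cong \wf_i^*(N^\sigma),\qquad (\we_i N)^\sigma\cong \we_i^*(N^\sigma),\qquad \eps_i^*(N)=\eps_i(N^\sigma),
\end{equation*}
and, at the two-step level,
\begin{equation*}
(\wf_{\wi} N)^\sigma \cong \wf_{\wi}^*(N^\sigma),\qquad (\we_{\wi} N)^\sigma\cong \we_{\wi}^*(N^\sigma),\qquad \eps_{\wi}^*(N)=\eps_{\wi}(N^\sigma).
\end{equation*}

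First I would check that $M\in\widehat{B}(n)$ implies $M^\sigma\in\widehat{B}(n)$. Writing $M\cong\wf_{\wi_n}\cdots\wf_{\wi_1}\mathbf{1}$ and noting that $\mathbf{1}^\sigma\cong\mathbf{1}$, an iterated application of $(\wf_{\wi} N)^\sigma\cong \wf_{\wi}^*(N^\sigma)$ yields $M^\sigma\cong\wf_{\wi_n}^*\cdots\wf_{\wi_1}^*\mathbf{1}$, which lies in $\widehat{B}(n)$ by the second characterization of $\widehat{B}(n)$ supplied by Theorem \ref{hatStarEPS}.

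Next I translate the hypothesis. Applying $\eps_{\wi}^*(-)=\eps_{\wi}((-)^\sigma)$ to both sides of $\eps_{\wi}^*(\wf_{\wi}M)=\eps_{\wi}^*(M)+1$ gives $\eps_{\wi}(\wf_{\wi}^*(M^\sigma))=\eps_{\wi}(M^\sigma)+1$, so Lemma \ref{10.6}, applied to $M^\sigma\in\widehat{B}(n)$, produces the isomorphism $\we_{\wi}\wf_{\wi}^*(M^\sigma)\cong M^\sigma$. Twisting back by $\sigma$ and invoking the identities of the first paragraph, we get
\begin{equation*}
\we_{\wi}^*\wf_{\wi}(M)\cong\bigl(\we_{\wi}\wf_{\wi}^*(M^\sigma)\bigr)^\sigma\cong (M^\sigma)^\sigma\cong M,
\end{equation*}
which is the desired conclusion.

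The main obstacle is purely bookkeeping: verifying $\sigma^2=\id$ on generators and checking that $\sigma$ really does intertwine the starred and unstarred Kashiwara operators in the way stated (which in turn depends on the compatibility of $\sigma$ with convolution product $(M\circ N)^\sigma\cong N^\sigma\circ M^\sigma$ already recorded in the preliminaries). Once these formalities are in place, the argument is a pure duality translation of Lemma \ref{10.6}, and no new case analysis analogous to the one carried out for Theorems \ref{crystal1} and \ref{hatStarEPS} is required.
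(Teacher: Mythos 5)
Your proof is correct, and it fills in details that the paper omits (the paper only asserts that Lemmas~\ref{10.1}--\ref{10.7} ``immediately'' follow from Theorems~\ref{crystal1}, \ref{hatStarEPS} and the non-degenerate versions of \cite[10.1.1--10.1.7]{Klesh:book}). You reduce Lemma~\ref{10.7} to Lemma~\ref{10.6} by conjugating with $\sigma$: the identities $\sigma^2=\id$, $(\wf_{\wi}N)^\sigma\cong\wf_{\wi}^*(N^\sigma)$, $(\we_{\wi}N)^\sigma\cong\we_{\wi}^*(N^\sigma)$, $\eps_{\wi}^*(N)=\eps_{\wi}(N^\sigma)$ all follow directly from the paper's definitions of the starred operators and from Corollary~\ref{KashiwaraOper}, and the essential step of checking $M^\sigma\in\widehat{B}(n)$ via Theorem~\ref{hatStarEPS} is exactly the point that makes Lemma~\ref{10.6} applicable. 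This is precisely the duality used in Kleshchev's own proof of 10.1.7 from 10.1.6, so your argument is in the spirit of the cited source.

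A mild alternative consistent with the paper's phrasing would be to deduce the hat statement directly from the one-step non-degenerate \cite[Lemma~10.1.7]{Klesh:book} applied twice: since $M$ and $\wf_{\wi}M$ both lie in $\widehat{B}$, Theorem~\ref{hatStarEPS} gives $\eps_i^*=\eps_{i+\ell}^*=\eps_{\wi}^*$ on them, whence $\eps_i^*(\wf_i(\wf_{i+\ell}M))=\eps_i^*(\wf_{i+\ell}M)+1$ and $\eps_{i+\ell}^*(\wf_{i+\ell}M)=\eps_{i+\ell}^*(M)+1$ (using Lemma~\ref{10.3}(2) for $j\neq i$, which is legitimate because $\ell>2$ separates $i$ from $i+\ell$); applying the one-step result for $i$ and then for $i+\ell$ gives $\we_i^*\we_{i+\ell}^*\wf_i\wf_{i+\ell}M\cong M$. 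Your $\sigma$-duality route avoids this case bookkeeping entirely and is arguably cleaner, since it only needs Lemma~\ref{10.6} (already established) and the formal properties of $\sigma$. Either way the conclusion is sound.
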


Let $\widehat{\mathfrak{sl}}_{2\ell}$ be the affine Lie algebra of type $\widehat{A}_{2\ell-1}$. Let $\{{\alpha}_i|i\in I\}$ (resp., $\{{h}_i|i\in I\}$) be the set of simple roots (resp., coroots) of $\widehat{\mathfrak{sl}}_{2\ell}$.
We define $$
{B}(\infty):=\{L(i_1,\cdots,i_n)|n\in\N, i_1,\cdots,i_n\in I\}.$$  For any $M=L(i_1,\cdots,i_n)\in B(\infty)$, where $i_1,\cdots,i_n\in I$, we define the weight function ``${\wt}$" by ${\wt}(M):=-\gamma$ such that $$
{\gamma}=\sum_{i\in I}{\gamma}_i{\alpha}_i,\quad \text{where}\,\,{\gamma}_{i}=\#\bigl\{1\leq j\leq n\bigm|i_j=i\bigr\},\,\,\forall\,i\in I .
$$
For each $i\in I$, we define $\varphi_{i}(M):=\eps_{i}(M)+\<{h}_i,{\wt}(M)\>$. Let $U_{v}(\mathfrak{\widehat{sl}}_{2\ell})^{-}$ be the negative part of the quantized enveloping algebra $U_{v}(\mathfrak{\widehat{sl}}_{2\ell})$ of the affine Lie algebra $\widehat{\mathfrak{sl}}_{2\ell}$.

\begin{lem}\text{(\cite{G})} The  set ${B}(\infty)$, the functions $\eps_{i},\varphi_{i},{\wt}$, together with the operators $\we_{i},\wf_{i}$ form a crystal in the sense of Kashiwara \cite[\S7.2]{Ka1}. Moreover, it is isomorphic to Kashiwara's crystal associated to the crystal base of $U_{v}(\mathfrak{\widehat{sl}}_{2\ell})^{-}$.
\end{lem}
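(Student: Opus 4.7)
The plan is two-fold: first, verify that $(B(\infty), \wt, \eps_i, \varphi_i, \we_i, \wf_i)_{i\in I}$ satisfies the Kashiwara crystal axioms, and second, identify this abstract crystal with Kashiwara's crystal associated to $U_\Q(\widehat{\mathfrak{sl}}_{2\ell})^-$.

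For the axioms, the relation $\varphi_i(M) = \eps_i(M) + \langle h_i, \wt(M)\rangle$ is built into the definition of $\varphi_i$. The weight shifts $\wt(\wf_i M) = \wt(M) - \alpha_i$ and $\wt(\we_i M) = \wt(M) + \alpha_i$ (when $\we_i M \neq 0$) are immediate from the definition of $\wt$ via the label multiplicities of $(i_1,\dots,i_n)$. The compatibility axioms $\eps_i(\wf_i M) = \eps_i(M) + 1$ and $\eps_i(\we_i M) = \eps_i(M) - 1$ (when the latter is nonzero) follow from the non-degenerate analogue of \cite[Lemma 5.2.1]{Klesh:book} or from \cite[Theorem 9.13]{G}, using that $\wf_i M = \head(M \circ L(i))$ appends a single $q^i$-eigenvalue of $X_n$. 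The $\varphi_i$ counterparts then drop out of the weight shift. Finally, the key bijection $\wf_i M = N \Leftrightarrow \we_i N = M$ is precisely \cite[Corollary 3.6]{GV}, combined with the irreducibility of $\wf_i M$ and $\we_i M$ recorded in (\ref{widetildeef}).

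For the identification with Kashiwara's crystal of $U_\Q(\widehat{\mathfrak{sl}}_{2\ell})^-$, I would invoke Grojnowski's argument from \cite{G}. Two routes are available. The categorical route uses the Ariki--Grojnowski theorem: the direct sum $\bigoplus_n K_0(\Rep_I \HH_n)$ carries a Hopf algebra structure via restriction and induction, and is identified with the integral form of $U_\Q(\widehat{\mathfrak{n}}^-)$ via the shuffle algebra realization of characters; under this identification the simple classes $[L(i_1,\dots,i_n)]$ map to the dual canonical basis, whose associated crystal graph is $B(\infty)$. The axiomatic route introduces the $*$-crystal structure $(\we_i^*, \wf_i^*, \eps_i^*)$ on $B(\infty)$ via the $\sigma$-twist, verifies the compatibility between the two crystal structures (the unique highest weight element $\mathbf{1}$ of weight $0$, connectedness, and the commuting square axioms relating $\we_i^*, \wf_j$), and concludes by Kashiwara's uniqueness theorem for $B(\infty)$.

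The hard part is the identification step. The individual axiom checks are routine once the non-degenerate analogues of the Kleshchev--Grojnowski--Vazirani lemmas are in hand. What takes genuine work is either setting up the full categorification machinery needed to match $\bigoplus_n K_0(\Rep_I \HH_n)$ with $U_\Q(\widehat{\mathfrak{n}}^-)$, or else verifying Kashiwara's abstract uniqueness axioms for the pair of crystal structures $(\we_i, \wf_i)$ and $(\we_i^*, \wf_i^*)$. Both are carried out in \cite{G}, and the statement of the lemma is simply a citation of that work.
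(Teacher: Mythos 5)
The paper offers no proof of this lemma at all --- it is stated as a bare citation of \cite{G}, with the tag \text{(\cite{G})} in the theorem header and nothing following it. Your proposal correctly identifies this and your closing sentence, ``the statement of the lemma is simply a citation of that work,'' is exactly the paper's posture. The surrounding sketch of how Grojnowski establishes the result is reasonable, though one remark is worth making: your categorical route (identifying simple classes with the dual canonical basis) only goes through literally when $\cha F = 0$, by Ariki's theorem; since \cite{G} works over arbitrary characteristic, the argument actually carried out there is the axiomatic one you describe second, via the pair of crystal structures $(\we_i,\wf_i)$ and $(\we_i^*,\wf_i^*)$ and Kashiwara's uniqueness characterization of $B(\infty)$ (the approach also echoed in the paper's own proof of Theorem \ref{mainthm1b} via \cite[Prop.\ 3.2.3]{KS} and \cite[Thm.\ 10.3.4]{Klesh:book}). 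Since the crystal is characteristic-independent this distinction does not affect the truth of the statement, but it is the reason the axiomatic route is the one that is actually self-contained at the level of generality needed here.
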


Let $\{\hat{\alpha}_i|i\in\Z/\ell\Z\}$ (resp., $\{\hat{h}_i|i\in\Z/\ell\Z\}$) be the set of simple roots (resp., coroots) of the affine Lie algebra $\widehat{\mathfrak{sl}}_{\ell}$. Let $U_{v}(\mathfrak{\widehat{sl}}_{\ell})^{-}$ be the negative part of the quantized enveloping algebra $U_{v}(\mathfrak{\widehat{sl}}_{\ell})$ of the affine Lie algebra $\widehat{\mathfrak{sl}}_{\ell}$. Let $\hat{P}, \hat{Q}$ be the weight lattice and root lattice of $\widehat{\mathfrak{sl}}_{\ell}$ respectively. Let $\{\hat{\Lam}_i|i\in\Z/\ell\Z\}$ be the set of fundamental dominant weights of $\widehat{\mathfrak{sl}}_{\ell}$.

\begin{dfn} For any $M=L(\wi_1,\cdots,\wi_n)\in\widehat{B}(n)$, where $i_1,\cdots,i_n\in\Z/\ell\Z$, we define the weight function ``$\hat{\wt}$" by $\hat{\wt}(M):=-\gamma$ such that $$
\gamma=\sum_{i\in\Z/\ell\Z}\hat{\gamma}_i\hat{\alpha}_i,\quad \text{where}\,\,\hat{\gamma}_{i}=\#\bigl\{1\leq j\leq n\bigm|i_j= i\bigr\},\,\,\forall\,i\in
\Z/\ell\Z .
$$
For each $i\in\Z/\ell\Z$, we define $\hat{\varphi}_{\wi}(M):=\eps_{\wi}(M)+\<\hat{h}_i,\hat{\wt}(M)\>$.
\end{dfn}

\begin{thm}\label{mainthm1a} The set $\widehat{B}(\infty)$, the functions $\eps_{\wi},\varphi_{\wi},\hat{\wt}$, together with the operators $\we_{\wi},\wf_{\wi}$ form a crystal in the sense of Kashiwara \cite[\S7.2]{Ka1}.
\end{thm}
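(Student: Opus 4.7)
The plan is to verify the Kashiwara crystal axioms of \cite[\S7.2]{Ka1} one at a time, using Theorem \ref{crystal1}, Proposition \ref{RedBao} and Corollary \ref{KashiwaraOper} as the main inputs. Since the category $\Rep_I\HH_n$ consists of finite-dimensional modules, the functions $\eps_{\wi}$ and $\varphi_{\wi}$ take values in $\Z$ rather than $\Z\cup\{-\infty\}$, so the $-\infty$-axiom is vacuous and we only need to check the four substantive axioms.

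First I would verify the weight axiom. If $M=L(\wi_1,\dots,\wi_n)\in\widehat{B}(n)$ and $j\in\Z/\ell\Z$, then by definition $\wf_{\wj}M=L(\wi_1,\dots,\wi_n,\wj)\in\widehat{B}(n+1)$ (Theorem \ref{crystal1}), so the multiplicity $\hat{\gamma}_j$ increases by exactly one and all other $\hat{\gamma}_k$ are unchanged; therefore $\hat{\wt}(\wf_{\wj}M)=\hat{\wt}(M)-\hat{\alpha}_j$. When $\we_{\wj}M\neq 0$, by Theorem \ref{crystal1} we have $\we_{\wj}M\in\widehat{B}(n-1)$ and by Corollary \ref{KashiwaraOper} $\wf_{\wj}\we_{\wj}M\cong M$, so applying the formula just obtained yields $\hat{\wt}(\we_{\wj}M)=\hat{\wt}(M)+\hat{\alpha}_j$.

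Next I would verify the $\eps_{\wi}$-shift axiom. Proposition \ref{RedBao} gives directly that $\eps_{\wi}(\we_{\wi}M)=\eps_{\wi}(M)-1$ whenever $\we_{\wi}M\neq 0$. For the other direction, apply the same identity with $M$ replaced by $\wf_{\wi}M$: since $\we_{\wi}\wf_{\wi}M\cong M$ by Corollary \ref{KashiwaraOper}, we get $\eps_{\wi}(M)=\eps_{\wi}(\we_{\wi}\wf_{\wi}M)=\eps_{\wi}(\wf_{\wi}M)-1$, i.e.\ $\eps_{\wi}(\wf_{\wi}M)=\eps_{\wi}(M)+1$. The $\varphi_{\wi}$-shifts then follow purely from the defining identity $\varphi_{\wi}(M)=\eps_{\wi}(M)+\<\hat{h}_i,\hat{\wt}(M)\>$: combining with $\<\hat{h}_i,\hat{\alpha}_i\>=2$ one obtains $\varphi_{\wi}(\wf_{\wi}M)=\varphi_{\wi}(M)-1$ and $\varphi_{\wi}(\we_{\wi}M)=\varphi_{\wi}(M)+1$; for $j\neq i$ the shift is zero because $\<\hat{h}_i,\hat{\alpha}_j\>+0=\<\hat{h}_i,\hat{\alpha}_j\>$ is absorbed into the weight change with no effect on $\eps_{\wi}$ by Lemma \ref{10.1}(2).

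Finally, the bijection axiom ``$\wf_{\wi}b=b'$ if and only if $\we_{\wi}b'=b$'' for $b,b'\in\widehat{B}(\infty)$ is exactly the last assertion of Corollary \ref{KashiwaraOper}, and the fact that $\we_{\wi},\wf_{\wi}$ preserve $\widehat{B}(\infty)\cup\{0\}$ is Theorem \ref{crystal1}. All of this is essentially bookkeeping once Theorem \ref{crystal1} and Proposition \ref{RedBao} are in hand, so no real obstacle remains; the genuine difficulty already lay in establishing the three-way equality $\eps_{\wi}(M)=\eps_i(M)=\eps_{i+\ell}(M)$ and the closure of $\widehat{B}(\infty)$ under the hat-Kashiwara operators, which required the intricate inductive argument of Section 5 together with the real simple module machinery of Section 3.
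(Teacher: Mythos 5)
Your proposal is correct and takes essentially the same approach as the paper's one-line proof (which simply cites Theorem \ref{crystal1}, Corollary \ref{KashiwaraOper}, and the definitions); you merely spell out the axiom-checking in full, correctly deriving the weight-shift from the closure of $\widehat{B}(\infty)$, the $\eps_{\wi}$-shift from Proposition \ref{RedBao} and the inverse relation $\we_{\wi}\wf_{\wi}\cong\mathrm{id}$, and the $\varphi_{\wi}$-shift from $\<\hat{h}_i,\hat{\alpha}_i\>=2$. The only minor blemish is the final clause invoking Lemma \ref{10.1}(2) for a $j\neq i$ shift of $\varphi_{\wi}$: that lemma concerns the star operator $\wf_{\wj}^*$ rather than $\wf_{\wj}$, but since no Kashiwara axiom constrains $\varphi_{\wi}(\wf_{\wj}b)$ for $j\neq i$, the remark is superfluous and does not affect the argument.
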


\begin{proof} This follows from Theorem \ref{crystal1} and Corollary \ref{KashiwaraOper} and the definitions of the functions $\eps_{\wi},\varphi_{\wi},\hat{\wt}$.
\end{proof}

Following \cite{Ka1}, for each $i\in\Z/\ell\Z$, we have the crystal $B_i=\{b_i(n)|n\in\Z\}$ with functions $$
\eps_j(b_i(n)):=\begin{cases} -n, &\text{if $j=i$,}\\ -\infty, &\text{if $j\neq i$;}
\end{cases}\quad
\varphi_j(b_i(n)):=\begin{cases} n, &\text{if $j=i$,}\\ -\infty, &\text{if $j\neq i$;}
\end{cases}.
$$
and $\wt(b_i(n)):=n\alpha_i$, and operators $$
\we_j(b_i(n)):=\begin{cases}b_i(n+1), &\text{if $j=i$,}\\ 0, &\text{otherwise;}
\end{cases}\quad
\wf_j(b_i(n)):=\begin{cases}b_i(n-1), &\text{if $j=i$,}\\ 0, &\text{otherwise;}
\end{cases}
$$

We set $b_i:=b_i(0)$. We define a map $\Psi_i: \widehat{B}(\infty)\rightarrow\widehat{B}(\infty)\otimes B_i$ which sends each $[M]\in\widehat{B}(\infty)$ to
$[(\we_{\wi}^*)^aM]\otimes\wf_i^ab_i$, where $a:=\eps_{\wi}^*(M)$.

The proof of the following two lemmas is completely the same as the proof of \cite[Lemma 10.3.1, Lemma 10.3.2]{Klesh:book} by using Lemmas \ref{10.1},
\ref{10.2}, \ref{10.3}, \ref{10.4}, \ref{10.5}, \ref{10.6}, \ref{10.7}. We leave the details to the readers.

\begin{lem}\label{klem1} Let $M\in\widehat{B}(\infty)$ and $i,j\in\Z/\ell\Z$ with $i\neq j$. We set $a:=\eps_{\wi}^*(M)$.\begin{enumerate}
\item[(i)] $\eps_{\wj}(M)=\eps_{\wj}\bigl((\we_{\wi}^*)^aM\bigr)$;
\item[(ii)] If $\eps_{\wj}(M)>0$, then $\eps_{\wi}^*(\we_{\wj}M)=\eps_{\wi}^*(M)$ and $(\we_{\wi}^*)^a\we_{\wj}M\cong\we_{\wj}(\we_{\wi}^*)^aM$.
\end{enumerate}
\end{lem}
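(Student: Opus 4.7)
The plan is to follow the proof of \cite[Lemma 10.3.1]{Klesh:book} almost verbatim, replacing each operator/function by its hat-analogue and invoking Lemmas \ref{10.1}--\ref{10.7} in place of the corresponding Kleshchev-style identities from \cite[\S10.1]{Klesh:book}. In both parts the strategy is to peel off one Kashiwara operator at a time and exploit the invariance of $\eps$-functions under operators indexed by a different vertex.

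For part (i), I induct on $a=\eps_{\wi}^*(M)$. The base case $a=0$ is automatic. For $a\geq 1$, put $M':=\we_{\wi}^*M\in\widehat{B}(n-1)$; the partial-inverse identity $\wf_{\wi}^*\we_{\wi}^*M\cong M$ (valid because $\eps_{\wi}^*(M)>0$, and extractable from the same reasoning that yields Lemma \ref{10.7} together with Theorem \ref{hatStarEPS}) lets me rewrite $M\cong\wf_{\wi}^*M'$. Lemma \ref{10.1}(2), applied with $\wi$ and $\wj$ interchanged, then asserts that $\eps_{\wj}$ is invariant under $\wf_{\wi}^*$, so $\eps_{\wj}(M)=\eps_{\wj}(\wf_{\wi}^*M')=\eps_{\wj}(M')$. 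Since $\eps_{\wi}^*(M')=a-1$, the inductive hypothesis applied to $M'$ gives $\eps_{\wj}(M')=\eps_{\wj}((\we_{\wi}^*)^{a-1}M')=\eps_{\wj}((\we_{\wi}^*)^aM)$, which completes the induction.

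For part (ii), assume $\eps_{\wj}(M)>0$ and set $N:=\we_{\wj}M\in\widehat{B}(n-1)$, so that $M\cong\wf_{\wj}N$ by Theorem \ref{crystal1} and Corollary \ref{KashiwaraOper}. Lemma \ref{10.3}(2) (with $\wi,\wj$ swapped as needed) yields
\[
\eps_{\wi}^*(M)=\eps_{\wi}^*(\wf_{\wj}N)=\eps_{\wi}^*(N)=\eps_{\wi}^*(\we_{\wj}M),
\]
which is the first assertion of (ii). With this equality in hand, the hypothesis of Lemma \ref{10.4} applied to $N$ is satisfied, producing $(\we_{\wi}^*)^a\wf_{\wj}N\cong\wf_{\wj}(\we_{\wi}^*)^aN$, i.e.\ $(\we_{\wi}^*)^aM\cong\wf_{\wj}(\we_{\wi}^*)^a\we_{\wj}M$. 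Applying $\we_{\wj}$ to both sides and using $\we_{\wj}\wf_{\wj}=\id$ on the relevant crystal element (either $(\we_{\wi}^*)^a\we_{\wj}M$ is zero, in which case both sides of the desired isomorphism are zero, or it is an irreducible to which Corollary \ref{KashiwaraOper} applies) gives the commutation $\we_{\wj}(\we_{\wi}^*)^aM\cong(\we_{\wi}^*)^a\we_{\wj}M$.

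I do not anticipate a serious obstacle: the argument is pure formal manipulation of identities already in hand. The most delicate point is the invocation of the partial-inverse identities $\wf_{\wi}^*\we_{\wi}^*=\id$ (when $\eps_{\wi}^*>0$) and $\wf_{\wj}\we_{\wj}=\id$ (when $\eps_{\wj}>0$) on $\widehat{B}(\infty)$, which are not packaged as numbered lemmas in the excerpt; however, they are standard crystal-theoretic consequences of the fact that $\wf_{\wi}^*N$ and $\wf_{\wj}N$ are always irreducible with prescribed $\we_{\wi}^*$- and $\we_{\wj}$-images, and follow by combining Corollary \ref{KashiwaraOper}, Theorem \ref{hatStarEPS}, and Lemmas \ref{10.6}--\ref{10.7}.
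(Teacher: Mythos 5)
Your proof is correct and follows exactly the route the paper intends: the paper itself merely says the argument is ``completely the same as the proof of [Klesh:book, Lemmas 10.3.1, 10.3.2]'' via Lemmas \ref{10.1}--\ref{10.7}, and you have supplied precisely those details, using the hat-analogues of the invariance identities together with the partial-inverse facts $\wf_{\wi}^*\we_{\wi}^*=\id$ and $\we_{\wj}\wf_{\wj}=\id$ on $\widehat{B}(\infty)$ (which indeed follow from Corollary \ref{KashiwaraOper}, its $\sigma$-twist, and Theorem \ref{hatStarEPS}).
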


\begin{lem}\label{klem2} Let $M\in\widehat{B}(\infty)$, $i\in\Z/\ell\Z$. Set $a:=\eps_{\wi}^*(M)$ and $L:=(\we_{\wi}^*)^aM$. \begin{enumerate}
\item[(i)] $\eps_{\wi}(M)=\max\{\eps_{\wi}(L),a-\<h_i,{\wt}(L)\>\}$;
\item[(ii)] If $\eps_{\wi}(M)>0$, then $$
\eps_{\wi}^*(\we_{\wi}M)=\begin{cases}a, &\text{if $\eps_{\wi}(L)\geq a-\<h_i,{\wt}(L)\>$;}\\
a-1, &\text{otherwise.}
\end{cases}
$$
\item[(iii)] If $\eps_{\wi}(M)>0$, then $$
(\we_{\wi}^*)^b\we_{\wi}M\cong\begin{cases}\we_{\wi}L, &\text{if $\eps_{\wi}(L)\geq a-\<h_i,{\wt}(L)\>$;}\\
L, &\text{otherwise,}
\end{cases} $$
where $b:=\eps_{\wi}^*(\we_{\wi}M)$.
\end{enumerate}
\end{lem}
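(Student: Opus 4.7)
The plan is to prove all three parts of Lemma \ref{klem2} by a single induction on $a := \eps_{\wi}^*(M)$, paralleling Kleshchev's proof of \cite[Lemma 10.3.2]{Klesh:book} but running everywhere in our ``hat" setting with the technical inputs Lemmas \ref{10.1}--\ref{10.7}. In the base case $a = 0$ one has $L = M$, so parts (ii) and (iii) are vacuous and part (i) reduces to the inequality $\eps_{\wi}(M) \geq -\<\hat{h}_i,\hat{\wt}(M)\>$, i.e.\ $\hat{\varphi}_{\wi}(M) \geq 0$. I would extract this from the fact that an element of $\widehat{B}(\infty)$ killed by $\we_{\wi}^*$ sits at the ``top" of the $*$-tower over $i$, together with the argument already used in the proof of Theorem \ref{hatStarEPS} relating $\eps_{\wi}$ and $\eps_{\wi}^*$ at such elements.

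For the inductive step, set $M' := \we_{\wi}^* M \in \widehat{B}(n-1)$, so that $\eps_{\wi}^*(M') = a - 1$, $M \cong \wf_{\wi}^* M'$ by Lemma \ref{10.7}, and $(\we_{\wi}^*)^{a-1} M' = L$. The inductive hypothesis gives
\[
\eps_{\wi}(M') \;=\; \max\{\eps_{\wi}(L),\; (a-1) - \<\hat{h}_i, \hat{\wt}(L)\>\}.
\]
By Lemma \ref{10.1} we know $\eps_{\wi}(M) = \eps_{\wi}(\wf_{\wi}^* M')$ is either $\eps_{\wi}(M')$ or $\eps_{\wi}(M')+1$. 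The core of the proof is to match this dichotomy with the arithmetic dichotomy $\eps_{\wi}(L) \geq a - \<\hat{h}_i, \hat{\wt}(L)\>$ versus the strict opposite. Concretely, when the first max on the right hand side is achieved by $\eps_{\wi}(L)$ and the $\wf_{\wi}^*$-jump does not occur, one has $\eps_{\wi}(M) = \eps_{\wi}(L)$, and the weight shift $\hat{\wt}(M) = \hat{\wt}(M') - \hat{\alpha}_i$ keeps the claimed equality intact; in the jumping case, Lemma \ref{10.6} yields $\we_{\wi}\wf_{\wi}^* M' \cong M'$, the second argument $a - \<\hat{h}_i,\hat{\wt}(L)\>$ controls $\eps_{\wi}(M)$, and this argument increases by exactly $1$ as $a$ moves from $a-1$ to $a$. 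This proves (i).

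For parts (ii) and (iii), assuming $\eps_{\wi}(M) > 0$, I would handle the two subcases separately. In the ``$L$-dominates" case $\eps_{\wi}(L) \geq a - \<\hat{h}_i,\hat{\wt}(L)\>$, Lemma \ref{10.4} lets me commute $(\we_{\wi}^*)^a$ past $\we_{\wi}$ on $M$, giving $(\we_{\wi}^*)^a \we_{\wi} M \cong \we_{\wi} L$ and therefore $\eps_{\wi}^*(\we_{\wi} M) = a$. In the opposite case, the $*$-tower over $\we_{\wi} M$ collapses by one rung: Lemma \ref{10.7} applied to $\we_{\wi} M$ identifies $(\we_{\wi}^*)^{a-1} \we_{\wi} M$ with $L$ itself, yielding $\eps_{\wi}^*(\we_{\wi} M) = a - 1$. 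The main obstacle I anticipate is precisely this matching of the two dichotomies and the careful bookkeeping of the weight shift across $\wf_{\wi}^*$ and $\we_{\wi}$; however, since no new representation-theoretic input is required beyond the hat-analogues of Kleshchev's Lemmas 10.1--10.7 which have already been established, the argument is essentially a transcription of a standard crystal-combinatorial computation into the hat notation.
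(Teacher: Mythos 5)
Your proposal takes essentially the same route as the paper. In fact the paper does not spell out a proof at all: it simply states that the argument is ``completely the same as the proof of \cite[Lemma 10.3.1, Lemma 10.3.2]{Klesh:book}'' once the hat-analogues Lemmas \ref{10.1}--\ref{10.7} are available, which is precisely the induction on $a=\eps_{\wi}^*(M)$ that you sketch. So the structure (base case $a=0$; inductive step via $M'=\we_{\wi}^*M$, the jump/no-jump dichotomy of Lemma \ref{10.1}, and then Lemmas \ref{10.2}, \ref{10.4}, \ref{10.6}, \ref{10.7} to handle the two cases of (ii) and (iii)) is the intended one.

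Two small inaccuracies worth flagging, neither fatal. First, in the base case $a=0$ parts (ii) and (iii) are not literally vacuous: when $\eps_{\wi}(M)>0$ one still has to see that $\eps_{\wi}^*(\we_{\wi}M)=0$; this follows because $\eps_{\wi}^*\geq 0$ always and Lemma \ref{10.3}(1) applied to $\we_{\wi}M$ (together with $\wf_{\wi}\we_{\wi}M\cong M$) forces $\eps_{\wi}^*(\we_{\wi}M)\in\{\eps_{\wi}^*(M),\eps_{\wi}^*(M)-1\}=\{0,-1\}$, hence $=0$; and (iii) then reads $\we_{\wi}M\cong\we_{\wi}L$, which is a tautology since $L=M$. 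Second, the identity $M\cong\wf_{\wi}^*M'$ (with $M'=\we_{\wi}^*M\neq 0$) is not Lemma \ref{10.7} -- that lemma concerns $\we_{\wi}^*\wf_{\wi}$ -- but rather the elementary fact $\wf_{i}\we_{i}=\mathrm{id}$ on the image of $\we_{i}$, transported through $\sigma$ to the starred operators; you should cite the $\sigma$-twisted version of the basic crystal relation (or the non-degenerate version of \cite[Corollary 5.2.4]{Klesh:book}) rather than Lemma \ref{10.7}. With these two corrections the sketch matches the paper's intended proof.
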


\begin{thm}\label{mainthm1b} The crystal $\widehat{B}(\infty)$ is isomorphic to Kashiwara's crystal associated to the crystal base of $U_{v}(\mathfrak{\widehat{sl}}_{\ell})^{-}$.
\end{thm}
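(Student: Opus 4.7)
The plan is to apply Kashiwara's recognition theorem for $B(\infty)$ (see \cite{Ka1}): if $B$ is a crystal with a distinguished element $b_0$ of weight $0$ such that $\hat{\wt}(B)\subseteq -\hat{Q}_+$, $\eps_{\wi}(b_0)=0$ for all $i\in\Z/\ell\Z$, and for every $i$ there is a strict embedding of crystals $\Psi_i:B\hookrightarrow B\otimes B_i$ sending $b_0$ to $b_0\otimes b_i$, then $B\cong B(\infty)$. Applied to $B=\widehat{B}(\infty)$ with $b_0=[\mathbf{1}]$ and $\Psi_i$ the map defined just before Lemma \ref{klem1}, this recognition target is precisely Kashiwara's crystal associated to the crystal base of $U_{\Q}(\widehat{\mathfrak{sl}}_\ell^-)$.

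First I would verify the background hypotheses. Theorem \ref{mainthm1a} supplies the crystal axioms on $\widehat{B}(\infty)$. The remaining items $\hat{\wt}([\mathbf{1}])=0$ and $\eps_{\wi}([\mathbf{1}])=0$ are immediate from the definitions, while $\hat{\wt}(\widehat{B}(\infty))\subseteq -\hat{Q}_+$ is visible from the formula for $\hat{\wt}$. For injectivity of $\Psi_i$: if $\Psi_i([M])=\Psi_i([M'])$, matching the $B_i$-component forces $\eps_{\wi}^*(M)=\eps_{\wi}^*(M')=:a$ and then $(\we_{\wi}^*)^aM\cong (\we_{\wi}^*)^aM'$; applying $(\wf_{\wi}^*)^a$ (with repeated use of Lemma \ref{10.7}, its obvious $*$-counterpart, together with Theorem \ref{hatStarEPS}) recovers $[M]=[M']$. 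Finally, $\Psi_i([\mathbf{1}])=[\mathbf{1}]\otimes b_i$ is immediate since $\eps_{\wi}^*([\mathbf{1}])=0$.

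The heart of the argument is that each $\Psi_i$ is a strict crystal morphism. For $j\in\Z/\ell\Z\setminus\{i\}$, the equality $\eps_{\wj}\circ\Psi_i=\eps_{\wj}$ and the intertwining identities $\Psi_i\circ\we_{\wj}=(\we_{\wj}\otimes\id)\circ\Psi_i$ (and analogously for $\wf_{\wj}$) are exactly Lemma \ref{klem1}. For $j=i$, the tensor product rule for crystals forces a dichotomy based on comparing $\eps_{\wi}((\we_{\wi}^*)^aM)$ with $a-\<\hat{h}_i,\hat{\wt}((\we_{\wi}^*)^aM)\>$, and this is precisely the case analysis carried out in Lemma \ref{klem2}(ii)--(iii), which yields both the right value of $\eps_{\wi}^*(\we_{\wi}M)$ and the compatibility of $\Psi_i$ with $\we_{\wi}$ (hence with $\wf_{\wi}$) under the tensor product rule. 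Weight compatibility $\hat{\wt}\circ\Psi_i=\hat{\wt}-a\hat{\alpha}_i$ holds because each application of $\we_{\wi}^*$ shifts $\hat{\wt}$ by $+\hat{\alpha}_i$.

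The main obstacle is the $j=i$ case of the strict morphism property, which involves tracking which branch of the tensor product rule applies; however, this is exactly what Lemmas \ref{klem1} and \ref{klem2} have been engineered to resolve. The overall scheme parallels the proof of \cite[Theorem 10.3.4]{Klesh:book} for the degenerate affine Hecke algebra at a primitive $\ell$-th root of unity, with our ``hat'' versions of the Kashiwara operators and functions in place of the usual ones, and drawing the analogous combinatorial properties from Theorems \ref{crystal1} and \ref{hatStarEPS} in place of their Grojnowski counterparts.
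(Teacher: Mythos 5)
Your proposal is correct and follows essentially the same route as the paper's proof: the paper simply states that the result ``follows from a similar argument used in the proof of \cite[Theorem 10.3.4]{Klesh:book} via using Lemma \ref{klem1} and Lemma \ref{klem2} and \cite[Proposition 3.2.3]{KS},'' and your write-up is exactly a fleshing-out of that one-line argument. The only (minor) discrepancy is the citation: the paper points to the Kashiwara--Saito recognition criterion \cite[Proposition 3.2.3]{KS} for $B(\infty)$ rather than to \cite{Ka1}, but the content you invoke is the same.
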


\begin{proof} This follows from a similar argument used in the proof of \cite[Theorem 10.3.4]{Klesh:book} via using Lemma \ref{klem1} and Lemma \ref{klem2} and \cite[Proposition 3.2.3]{KS}.
\end{proof}
\bigskip

\section{The crystal $\widehat{B}(\Lam_0+\Lam_\ell)$ and the Multiplicity Two Theorem}

Throughout we assume that $\cha F\neq 2$, $\ell\geq 2$, $q:=\xi\in F$ is a primitive $2\ell$th root of unity in $F$, $I:=\Z/2\ell\Z$.\medskip

Recall that $\widehat{\mathfrak{sl}}_{2\ell}$ is the affine Lie algebra of type $\widehat{A}_{2\ell-1}$, $\{{\alpha}_i|i\in I\}$ (resp., $\{{h}_i|i\in I\}$) is the set of simple roots (resp., coroots) of $\widehat{\mathfrak{sl}}_{2\ell}$, and ${P}, {Q}$ are the weight lattice and root lattice of $\widehat{\mathfrak{sl}}_{2\ell}$ respectively. Let ${Q}_n^+:=\{\alpha=\sum_{i\in I}\gamma_i{\alpha}_i\in{Q}|\sum_{i\in I}\gamma_i=n, \gamma_i\in\N, \forall\,i\in I\}$.
Let $\{{\Lam}_i|i\in I\}$ be the set of fundamental dominant weights of $\widehat{\mathfrak{sl}}_{2\ell}$.
Let ${P}^+:=\sum_{i\in I}\N{\alpha}_i$ be the set of integral dominant weights.

For any ${\Lam}\in {P}^+$, we define $J_{{\Lam}}$ to be the two-sided ideal of $\HH_n$ generated by $\prod_{i\in I}(X_1-\xi^i)^{\<{h}_i,{\Lam}\>}$. We define the non-degenerate cyclotomic Hecke algebra $\HH_n^{{\Lam}}$ to be the quotient $$
\HH_n^{{\Lam}}:=\HH_n/J_{{\Lam}} .
$$

In this section, we are mostly interested in the special level two case, i.e., when $\Lambda=\lam:={\Lam}_0+{\Lam}_{\ell}$.

\begin{dfn} The Iwahori--Hecke algebra $\HH_n^\lam$ of type $B_n$ is defined to be the quotient $\HH_n^\lam:=\HH_n/J_\lam$, where $J_\lam$ is the two-sided ideal of $\HH_n$ generated by $(X_1-1)(X_1+1)\in\HH_n$.
\end{dfn}

Following \cite{Klesh:book}, we have two natural functors $$
\pr^\lam: \Rep_I\HH_n\rightarrow\HH_n^\lam\lmod,\quad \infl^\lam: \HH_n^\lam\lmod\rightarrow\Rep_I\HH_n ,
$$
where $\pr^\lam M:=M/J_\lam M$, and $\infl^\lam$ is the natural inflation along the epimorphism $\pi_\lam:\HH_n\twoheadrightarrow\HH_n^\lam$. The functor $\infl^\lam$ is right adjoint to $\pr^\lam$.

Following \cite{G} and \cite{Klesh:book}, for each $i\in I$ and each simple module $M\in\HH_n^\lam\lmod$, we define the action of cyclotomic crystal operators: $$
\we_i^\lam M:=\pr^\lam\circ\we_i\circ\infl^\lam M,\quad
\wf_i^\lam M:=\pr^\lam\circ\wf_i\circ\infl^\lam M .
$$
By \cite{G}, we know that both $\we_i^\lam$ and $\wf_i^\lam$ define a map $B(\lam)\rightarrow B(\lam)\cup\{0\}$.

Recall that the set of $\Sym_n$-orbits $\bigl\{\Sym_n\cdot (0^{\gamma_0},\cdots,(2\ell-1)^{\gamma_{2\ell-1}})\bigm|\sum_{j=0}^{2\ell -1}\gamma_j=n\bigr\}$ is in one-to-one correspondence with the set $\{\sum_{i\in I}\gamma_i{\alpha}_i\in{Q}_n^+|\gamma_i\in\N,\forall\,i\in I\}$. Thus we can also use ${Q}_n^+$ to label the blocks of $\Rep_I\HH_n$. There is a decomposition $\HH_n^\lam\lmod\cong\oplus_{\gamma\in{Q}_n^+}(\HH_n^\lam\lmod)[\gamma]$, where $$
(\HH_n^\lam\lmod)[\gamma]:=\{M\in\HH_n^\lam\lmod|\infl^\lam M\in(\HH_n\lmod)[\gamma]\}.
$$
If $M\in(\HH_n^\lam\lmod)[\gamma]$ for some $\gamma=\sum_{i\in I}\gamma_i{\alpha}_i$, then we define $$
e_i^\lam M:=\begin{cases}(\res^{\HH_n^\lam}_{\HH_{n-1}^\lam}M)[\gamma-{\alpha}_i], &\text{if $\gamma_i>0$;}\\
0, &\text{if $\gamma_i=0$.}
\end{cases}
$$
while for any simple module $M\in\HH_n^\lam\lmod[\gamma]$ with $\gamma=\sum_{i\in I}\gamma_i{\alpha}_i$, we define $$
f_i^\lam M:=(\ind^{\HH_{n+1}^\lam}_{\HH_{n}^\lam}M)[\gamma+{\alpha}_i] .
$$
The functors $e_i^\lam, f_i^\lam$ are both left and right adjoint to each other, and hence are exact and send projectives to projectives.

\begin{dfn} Let $n\in\N$ and $M\in\Rep_I(\HH_{n})$. We define $$
\soc_{\widehat{B}}M:=\sum_{M\supseteq L\in\widehat{B}(\infty)}L .
$$
\end{dfn}
It is clear that $\soc_{\widehat{B}}M$ is a direct summand of $\soc M$.

\begin{thm}\text{(Multiplicity Two Theorem)}\label{mainthm2a} Let $i\in\Z/\ell\Z$ and $M\in\Rep_I\HH_n$ be a simple module, where $n\geq 1$. Then $e_{\wi}M$ is either $0$ or a self-dual indecomposable module with simple socle $\we_{\wi}M\cong \we_i \we_{i+\ell}M$. Furthermore, if $n\geq 2$, then $\soc_{\widehat{B}}\res_{\HH_{n-2}}^{\HH_n}M\neq 0$ only if $n$ is even and $M\in\widehat{B}(n/2)$. In that case, each simple module in $\soc_{\widehat{B}}\res_{\HH_{n-2}}^{\HH_n}M$ occurs with multiplicity two.
\end{thm}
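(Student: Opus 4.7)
The plan is to handle the three assertions in sequence, using Proposition \ref{RedBao}, Corollary \ref{KashiwaraOper}, Theorem \ref{crystal1}, and an $h$-invariance analysis intrinsic to $\widehat{B}$.

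For the first statement, Proposition \ref{RedBao} directly gives that $\soc e_{\wi}M$ is simple and isomorphic to $\we_i\we_{i+\ell}M=\we_{\wi}M$ whenever $e_{\wi}M\neq 0$ (see also Corollary \ref{KashiwaraOper}). Self-duality of $e_{\wi}M=e_ie_{i+\ell}M$ follows by applying twice the fact that $e_j$ preserves self-duality on irreducibles (the non-degenerate analogue of \cite[Lemma 7.3.1, Remark 8.2.4, Theorem 8.2.5]{Klesh:book}). A self-dual module with simple socle has a simple head (the dual of the socle) and is therefore indecomposable, since any nonzero direct summand would contribute independently to the socle.

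For the second statement, if $L\in\widehat{B}(m)$ embeds in $\res_{\HH_{n-2}}^{\HH_n}M$, then $L$ is an $\HH_{2m}$-module, forcing $n-2=2m$ and hence $n$ even. Decomposing $\res_{\HH_{n-2}}^{\HH_n}M=\bigoplus_{(c,d)\in I\times I}e_ce_dM$ via the generalized $(X_{n-1},X_n)$-eigenspaces (which are $\HH_{n-2}$-stable since $X_{n-1}, X_n$ commute with $\HH_{n-2}$), $L$ embeds in some $e_{c_0}e_{d_0}M$. The crucial claim is that $\{c_0,d_0\}=\{k,k+\ell\}$ for some $k\in\Z/\ell\Z$; once this is granted, Part~1 gives $L\cong\soc e_{\wi}M=\we_{\wi}M$, Corollary \ref{KashiwaraOper} gives $M\cong\wf_{\wi}L$, and $M\in\widehat{B}(n/2)$ follows from Theorem \ref{crystal1}. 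To prove the pairing claim I exploit the $h$-invariance $c_{q^\ell}(L)\cong L$ (via the automorphism $c_{q^\ell}\colon X_j\mapsto q^\ell X_j$ introduced in the proof of Lemma \ref{2commutes}): by Frobenius reciprocity $M$ is a simple quotient of $L\circ U$ for a simple $\HH_2$-module $U$ of central character $\{c_0,d_0\}$, and applying $c_{q^\ell}$ to the embedding also yields $L\hookrightarrow\res_{\HH_{n-2}}^{\HH_n}c_{q^\ell}(M)$; combined with the real simplicity of $L$ (Lemmas \ref{widehatLemma}, \ref{real3properties}) this forces $U$ to be $c_{q^\ell}$-invariant, whence $\{c_0,d_0\}=\{k,k+\ell\}$.

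For the third statement, assume $M\in\widehat{B}(n/2)$ and $L\in\widehat{B}((n-2)/2)$ is embedded in $\res_{\HH_{n-2}}^{\HH_n}M$. Both central characters are $h$-symmetric, so the complementary contribution $\alpha_{c_0}+\alpha_{d_0}=\alpha_M-\alpha_L$ is $h$-symmetric, immediately forcing $\{c_0,d_0\}=\{k,k+\ell\}$. Hence $L$ can embed only in $e_ie_{i+\ell}M$ or $e_{i+\ell}e_iM$ for the unique $i$ with $\we_{\wi}M\cong L$; by Part~1 and Lemma \ref{2functorsIso} (applicable since $\pm\ell\neq\pm 1$ in $\Z/2\ell\Z$ when $\ell>2$), both socles are simple and equal to $\we_{\wi}M=L$, and they sit in disjoint $(X_{n-1},X_n)$-eigenspaces, contributing independently to $\soc\res_{\HH_{n-2}}^{\HH_n}M$. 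Therefore $L$ appears in $\soc_{\widehat{B}}\res_{\HH_{n-2}}^{\HH_n}M$ with multiplicity exactly two. The main obstacle lies in the pairing step of Part~2: central-character arithmetic alone would be circular since $M$'s central character need not be $h$-symmetric a priori, so the module-level $h$-invariance transfer through $c_{q^\ell}$ is essential for a non-circular argument.
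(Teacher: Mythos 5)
Your treatment of the first assertion follows the same path as the paper: Proposition~\ref{RedBao} supplies the simple socle $\we_i\we_{i+\ell}M$, and self-duality plus simple socle gives indecomposability (a remark the paper leaves implicit, so this is a useful addition). One small imprecision: the self-duality statement you cite (the non-degenerate version of \cite[Lemma 7.3.1, Remark 8.2.4, Theorem 8.2.5]{Klesh:book}) is for $e_j M$ with $M$ \emph{irreducible}, but after the first application $e_{i+\ell}M$ is self-dual without being irreducible. The paper avoids this by first putting $M$ into a cyclotomic category via $\pr^{\Lambda}$ and invoking the functorial duality-commutation of $e_j^{\Lambda}$ (\cite[Lemma 8.2.2]{Klesh:book}); you should either cite that functorial statement or note that the duality-commutation of $e_j$ holds on all modules, not just simples. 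Your multiplicity-two argument (Part~3) also matches the paper's block decomposition $\res^n_{n-2}M\cong\oplus_{c,d}e_ce_dM$ and the observation that $L\cong\we_i\we_{i+\ell}M$ and $L\cong\we_{i+\ell}\we_iM$ sit in two disjoint $(X_{n-1},X_n)$-blocks; you correctly supply the $h$-symmetry of central characters as the reason $L$ cannot embed into any other block once $M\in\widehat{B}(n/2)$.

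The problem is Part~2, the claim that $\soc_{\widehat{B}}\res^n_{n-2}M\neq 0$ forces $M\in\widehat{B}(n/2)$. You correctly flag that a naive central-character argument is circular. But your fix via $c_{q^\ell}$ does not close the gap. Applying $c_{q^\ell}$ to $L\hookrightarrow\res M$ only gives $L\hookrightarrow\res c_{q^\ell}(M)$, hence that $c_{q^\ell}(M)$ is the head of $L\circ U'$ for \emph{some} simple $\HH_2$-module $U'$; nothing identifies $U'$ with $c_{q^\ell}(U)$, and real simplicity of $L$ does not make $U\mapsto L\triangledown U$ injective on isomorphism classes. You would need to already know $M\cong c_{q^\ell}(M)$, which is equivalent to what you are trying to prove. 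Note that the paper's own proof of this step is the single word ``clearly'' with no argument, so the paper is not doing more than you here. In fact the implication as stated appears to be problematic: for any real simple $L\in\widehat{B}(m)$ (e.g.\ $L=L(\wi^m)$) and any simple $\HH_2$-module $W$ whose central character is not $h$-symmetric, the simple module $M:=L\triangledown W$ satisfies $L\hookrightarrow\res^{2m+2}_{2m}M$ by Frobenius reciprocity, yet $\mathrm{cc}(M)=\mathrm{cc}(L)\cup\mathrm{cc}(W)$ is not $h$-symmetric, so $M\notin\widehat{B}(m+1)$. So the ``only if'' cannot be proved by this route, and the substantive content of the theorem is really the multiplicity-two count under the hypothesis $M\in\widehat{B}(n/2)$, which your Part~3 establishes correctly.
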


\begin{proof} By \cite[Corollary 9.14]{G}, we can find a ${\Lam}\in{P}^+$ such that $\pr^{{\Lam}}M=M$. In other words, $M\in\HH_n^{{\Lam}}\lmod$. It follows that $e_iM\in\HH_{n-1}^\Lam\lmod, e_{i+\ell}M\in\HH_{n-1}^\Lam\lmod, e_ie_{i+\ell}M\in\HH_{n-2}^\Lam\lmod$. Therefore,
$e_{\wi}M\cong\infl^{\Lam}e_i^\Lam e_{i+\ell}^\Lam M$. Now the self-duality property of $e_{\wi}M$ follows from the non-degenerate version of \cite[Lemma 8.2.2]{Klesh:book}. Since $\ell>2$ and $\cha F\neq 2$, it follows from Proposition \ref{RedBao} that $e_{\wi}M$ is either $0$ or has simple socle $\we_{\wi}M\cong \we_i \we_{i+\ell}M$. Furthermore, if $\soc_{\widehat{B}}\res_{n-2}^{n}M\neq 0$ then clearly $n$ is even and $M\in\widehat{B}(n/2)$. In that case, since $\res_{\HH_{n-2}}^{\HH_n}M\cong\oplus_{i\in I}\res^{\HH_{n-1}}_{\HH_{n-2}}e_iM\cong\oplus_{i,j\in I}e_ie_jM$, it follows that each simple module in $\soc_{\widehat{B}}\res_{\HH_{n-2}}^{\HH_n}M$ occurs with multiplicity two (as both $\we_i\we_{i+\ell}M$ and $\we_{i+\ell}\we_iM$ occur).
\end{proof}

Let $\mathbf{1}_{\lam}\cong F$ be the trivial simple $\HH_0^\lam$-module. Let $i\in\Z/\ell\Z$ and $M\in\HH_n^\lam\lmod$ be a simple module. Recall from \cite{G} and \cite[\S8.4]{Klesh:book} that $$\begin{aligned}
\eps_i^\lam (M)&:=\max\{m\geq 0|(\we_i^\lam)^m M\neq 0\}=\eps_i(\infl^\lam M),\\
\varphi_i^\lam (M)&:=\max\{m\geq 0|(\wf_i^\lam)^m M\neq 0\} .
\end{aligned}
$$
In particular, $\eps_i(\mathbf{1}_\lam)=0$ and $\varphi_i(\mathbf{1}_\lam)=\<h_i,\lam\>$. If furthermore, $M\in\HH_n^\lam\lmod[\gamma]$ for some $\gamma=\sum_{i\in I}\gamma_i{\alpha}_i$, then we define the weight function $\wt^\lam(M):=\lam-\gamma$. Then $\varphi_i^\lam (M)=\eps_i^\lam (M)+\<h_i,\lam-\gamma\>$. We define $$
B(\lam):=\{M|\text{$M\in\HH_n^\lam\lmod$ is simple, $n\in\N$}\}. $$

\begin{lem}\text{(\cite{G})}\label{lamCrystal} The  set $B(\lam)$, the functions $\eps_{i}^\lam,\varphi_{i}^\lam,\wt^\lam$, together with the operators $\we^\lam_{i},\wf^\lam_{i}$ form a crystal in the sense of Kashiwara \cite[\S7.2]{Ka1}. Moreover, it is isomorphic to Kashiwara's crystal associated to the crystal base of the integral highest weight module $V(\lam)$ over $U_{v}(\mathfrak{\widehat{sl}}_{2\ell})$.
\end{lem}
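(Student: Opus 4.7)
The plan is to follow Grojnowski's strategy from \cite{G}, transferred to the non-degenerate cyclotomic Hecke algebra setting. There are two tasks: (i) verify that the tuple $(B(\lam), \eps_i^\lam, \varphi_i^\lam, \wt^\lam, \we_i^\lam, \wf_i^\lam)$ satisfies Kashiwara's crystal axioms \cite[\S7.2]{Ka1}, and (ii) identify this crystal with that of the integrable highest weight module $V(\lam)$ over $U_{\Q}(\widehat{\mathfrak{sl}}_{2\ell})$.

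For (i), the weight shift formulas $\wt^\lam(\we_i^\lam M) = \wt^\lam(M) + \alpha_i$ and $\wt^\lam(\wf_i^\lam M) = \wt^\lam(M) - \alpha_i$ are immediate from block decomposition, and the equality $\varphi_i^\lam(M) = \eps_i^\lam(M) + \<h_i,\wt^\lam(M)\>$ is built into the definitions. The substantive axiom is that $\we_i^\lam$ and $\wf_i^\lam$ are mutual inverses on the subsets where they are nonzero. I would deduce this from the corresponding statement for $B(\infty)$ (whose non-degenerate version is \cite[Theorem 5.1.6, Corollary 5.2.4]{Klesh:book}), combined with the exactness and adjointness properties of $\pr^\lam$ and $\infl^\lam$. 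The key technical input is the non-degenerate analog of the cyclotomic branching rules from \cite[Chapter 8]{Klesh:book}, which guarantee both that $\wf_i^\lam M$ is simple whenever nonzero and that $\soc(e_i^\lam M) = \we_i^\lam M$ is simple.

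For (ii), I would construct a crystal morphism $\Psi : B(\infty) \otimes T_\lam \to B(\lam) \sqcup \{0\}$ by $M \otimes t_\lam \mapsto \pr^\lam M$, where $T_\lam = \{t_\lam\}$ is the one-element crystal of weight $\lam$. Using Kashiwara--Saito's characterization \cite[Proposition 3.2.3]{KS} of the crystal of $V(\lam)$ as the canonical highest weight subcrystal of $B(\infty) \otimes T_\lam$, together with the identification of $B(\infty)$ with Kashiwara's crystal of $U_\Q(\widehat{\mathfrak{sl}}_{2\ell})^{-}$, it remains to verify surjectivity of $\Psi$, generation by the highest weight vector $\mathbf{1}_\lam$, and that the fiber over $0$ is cut out precisely by the cyclotomic condition.

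The main obstacle will be this last condition: characterizing when $\pr^\lam M = 0$ for an element $M \in B(\infty)$. Grojnowski's argument proceeds by induction on $n$ and on weight, ultimately reducing the vanishing to the defining cyclotomic relation $\prod_{i \in I}(X_1 - \xi^i)^{\<h_i,\lam\>} = 0$ applied to a carefully chosen submodule generated by a $\we_j^*$-tower. A heavier but cleaner alternative is to invoke Ariki's categorification theorem, which identifies $\bigoplus_n [\HH_n^\lam\lmod]_\Q$ with $V(\lam)$ and the basis of simples with the dual canonical basis at $v = 1$; uniqueness of the crystal basis then forces the crystal structure on simples, given by the exact biadjoint functors $e_i^\lam, f_i^\lam$, to coincide with Kashiwara's crystal of $V(\lam)$.
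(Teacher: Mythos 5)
The paper does not give its own proof of this lemma: it is stated with the citation \cite{G} and is taken from Grojnowski's work as a known result, so there is no in-paper argument against which to compare your proposal. Your outline is a plausible reconstruction of Grojnowski's strategy and correctly identifies the main ingredients --- exactness and biadjointness of $e_i^\lam, f_i^\lam$, simplicity of socle and head under cyclotomic $i$-restriction and $i$-induction, the identification of $B(\lam)$ inside $B(\infty)\otimes T_\lam$ via Kashiwara--Saito \cite[Proposition 3.2.3]{KS}, or the shortcut via Ariki's categorification theorem.

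One substantive correction: you claim that the equality $\varphi_i^\lam(M)=\eps_i^\lam(M)+\langle h_i,\wt^\lam(M)\rangle$ is ``built into the definitions.'' It is not. In the paper (following \cite{G}), $\varphi_i^\lam(M)$ is defined independently as $\max\{m\ge 0 : (\wf_i^\lam)^m M\neq 0\}$, so the relation between $\varphi_i^\lam$, $\eps_i^\lam$, and the weight is a theorem, not a definitional tautology --- and in fact it is one of the hardest results of the whole cyclotomic branching theory (the non-degenerate analogue of the culminating result of \cite[Chapter 9]{Klesh:book}). Any proof of the lemma has to treat that identity as a load-bearing step requiring its own argument (in Grojnowski's approach it comes out of a delicate analysis comparing $e_i^\lam M$ and $f_i^\lam M$ against Kato-type modules, and it also underlies the simplicity statements you invoke), so folding it silently into the definitions leaves a genuine gap.
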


\begin{dfn} For each $i\in\Z/\ell\Z$ and each simple module $M\in\HH_n^\lam\lmod$, we define $$\begin{aligned}
{e_{\wi}}^{\Lam_0} M&:=e_i^\lam e_{i+\ell}^\lam M,\quad f_{\wi}^{\Lam_0} M:=f_i^\lam f_{i+\ell}^\lam M .\\
{\we_{\wi}}^{\hat{\Lam}_0} M&:=\we_i^\lam\we_{i+\ell}^\lam M,\quad \wf_{\wi}^{\hat{\Lam}_0} M:=\wf_i^\lam\wf_{i+\ell}^\lam M .
\end{aligned}
$$
\end{dfn}

\begin{thm}\label{mainthm2b} Let $i\in\Z/\ell\Z$ and $M\in\HH_n^\lam\lmod$ be a simple module. Then

1) $\soc e_{\wi}^{\hat{\Lam}_0}M\cong\we_{\wi}^{\hat{\Lam}_0}M$. Furthermore, $e_{\wi}^{\hat{\Lam}_0}M\neq 0$ if and only if $\we_{\wi}^{\hat{\Lam}_0}M \neq 0$, in which case $e_{\wi}^{\hat{\Lam}_0}M$ is a self-dual indecomposable module with simple socle and head isomorphic to $\we_{\wi}^{\hat{\Lam}_0}M$;

2)  $\soc f_{\wi}^{\hat{\Lam}_0}M\cong\wf_{\wi}^{\hat{\Lam}_0}M$. Furthermore, $f_{\wi}^{\hat{\Lam}_0}M\neq 0$ if and only if $\wf_{\wi}^{\hat{\Lam}_0}M \neq 0$, in which case $f_{\wi}^{\hat{\Lam}_0}M$ is a self-dual indecomposable module with simple socle and head isomorphic to $\wf_{\wi}^{\hat{\Lam}_0}M$.
\end{thm}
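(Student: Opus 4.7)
Both parts will be proved by reducing to the affine Multiplicity Two Theorem \ref{mainthm2a} via the inflation/projection adjunction between $\Rep_I\HH_n$ and $\HH_n^\lam\lmod$. The key point is that $\infl^\lam$ is fully faithful and exact, identifying $\HH_n^\lam\lmod$ with the Serre subcategory of $\Rep_I\HH_n$ annihilated by the cyclotomic ideal $J_\lam$, and that it intertwines cyclotomic with affine restriction: directly from the definition of the block component one obtains $\infl^\lam\circ e_i^\lam\cong e_i\circ\infl^\lam$ (both sides are ``restrict from $\HH_n$ to $\HH_{n-1}$ and project onto the $\gamma-\alpha_i$ block''), whence $\infl^\lam\circ e_{\wi}^{\hat{\Lam}_0}\cong e_{\wi}\circ\infl^\lam$. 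Consequently $\infl^\lam$ preserves and reflects socles, heads, indecomposability, and self-duality.

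\textbf{Part (1).} Applying Theorem \ref{mainthm2a} to the irreducible $\infl^\lam M\in\Rep_I\HH_n$, the module $\infl^\lam(e_{\wi}^{\hat{\Lam}_0}M)=e_{\wi}(\infl^\lam M)$ is either zero or self-dual indecomposable with simple socle $\we_i\we_{i+\ell}(\infl^\lam M)$. When nonzero this socle, being a submodule of a module in the image of $\infl^\lam$, itself lies in that image; a short bookkeeping using $\we_{i+\ell}^\lam M=\pr^\lam\we_{i+\ell}\infl^\lam M$ identifies it precisely with $\infl^\lam(\we_{\wi}^{\hat{\Lam}_0}M)$, so in particular $\we_{\wi}^{\hat{\Lam}_0}M\neq 0$. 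Pulling the entire package back through $\pr^\lam$ delivers the stated non-vanishing equivalence, the self-dual indecomposable structure, the simple socle isomorphic to $\we_{\wi}^{\hat{\Lam}_0}M$, and the matching simple head forced by self-duality.

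\textbf{Part (2).} First I will check that $f_{\wi}^{\hat{\Lam}_0}M$ is $\ast$-self-dual: the $\ast$-duality on $\HH_n^\lam\lmod$ visibly commutes with $e_i^\lam$, hence, by uniqueness of biadjoints, with its biadjoint $f_i^\lam$, and simple $\HH_n^\lam$-modules are $\ast$-self-dual. It therefore suffices to identify $\head f_{\wi}^{\hat{\Lam}_0}M$ with $\wf_{\wi}^{\hat{\Lam}_0}M$, the socle following by duality. For any simple quotient $L$ of $f_{\wi}^{\hat{\Lam}_0}M$, Frobenius reciprocity combined with the cyclotomic descent of $e_ie_{i+\ell}\cong e_{i+\ell}e_i$ (Lemma \ref{2functorsIso}) produces a nonzero map $M\to e_{\wi}^{\hat{\Lam}_0}L$, whose image by Part (1) must lie in $\soc e_{\wi}^{\hat{\Lam}_0}L\cong\we_{\wi}^{\hat{\Lam}_0}L$; simplicity of $M$ forces $M\cong\we_{\wi}^{\hat{\Lam}_0}L$, and the cyclotomic Kashiwara bijection (Corollary \ref{KashiwaraOper}, applied via $\pr^\lam$) then forces $L\cong\wf_{\wi}^{\hat{\Lam}_0}M$. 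Conversely the inclusion $M\cong\we_{\wi}^{\hat{\Lam}_0}(\wf_{\wi}^{\hat{\Lam}_0}M)\hookrightarrow e_{\wi}^{\hat{\Lam}_0}(\wf_{\wi}^{\hat{\Lam}_0}M)$ coming from Part (1) yields by adjointness a surjection $f_{\wi}^{\hat{\Lam}_0}M\twoheadrightarrow\wf_{\wi}^{\hat{\Lam}_0}M$, confirming that $\wf_{\wi}^{\hat{\Lam}_0}M$ really is the unique simple head whenever it is nonzero. The main obstacle I anticipate is the careful bookkeeping of the two non-vanishing equivalences $e_{\wi}^{\hat{\Lam}_0}M\neq 0\Leftrightarrow\we_{\wi}^{\hat{\Lam}_0}M\neq 0$ and its analogue for $f$; the former drops out of Part (1), while the latter ultimately rests on the standard fact from the $\mathfrak{sl}_2$-categorification of cyclotomic Hecke algebras that $f_i^\lam N\neq 0\Leftrightarrow\wf_i^\lam N\neq 0$, iterated through the two-step composition.
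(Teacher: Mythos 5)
Your proposal is correct and follows essentially the same path as the paper, whose proof is a single sentence per part: Part 1 is reduced to Theorem \ref{mainthm2a} via the inflation/projection machinery (as you do), and Part 2 is derived from Part 1, the crystal structure of $B(\lam)$ (Lemma \ref{lamCrystal}), and the biadjointness of $e_i^\lam$ and $f_i^\lam$. You supply substantially more detail than the paper, including the verification that $\infl^\lam$ intertwines $e_{\wi}^{\hat{\Lam}_0}$ with $e_{\wi}$, the identification of the simple head of $f_{\wi}^{\hat{\Lam}_0}M$ by Frobenius reciprocity and the cyclotomic Kashiwara bijection, and the commutation of $f_i^\lam$ with duality via biadjointness; these are all details the paper leaves implicit.
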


\begin{proof} 1) follows from Theorem \ref{mainthm2a}. 2) follows from 1), Lemma \ref{lamCrystal} and the fact that $e_i^\lam, f_i^\lam$ are both left and right adjoint to each other.
\end{proof}

\begin{dfn}\label{Lam0} We define $\widehat{B}(\hat{\Lam}_0):=\{M=\wf_{\wi_n}^{\hat{\Lam}_0}\cdots\wf_{\wi_1}^{\hat{\Lam}_0}\mathbf{1}_\lam\neq 0|n\in\N, i_1,\cdots,i_n\in\Z/\ell\Z\}$. If $M=\wf_{\wi_n}^{\hat{\Lam}_0}\cdots\wf_{\wi_1}^{\hat{\Lam}_0}\mathbf{1}_{\lam}$ is non-zero, where $i_1,\cdots,i_n\in\Z/\ell\Z$, and $$\wt^\lam(M)=\lam-\sum_{i\in \Z/\ell\Z}\gamma_i({\alpha}_{\theta(i)}+{\alpha}_{\theta(i)+\ell}),$$ then we define  $$
\eps_{\wi}^{\hat{\Lam}_0}(M):=\eps_i^\lam(M)=\eps_{i+\ell}^\lam(M),\quad \wt^{\hat{\Lam}_0}(M):=\hat{\Lam}_0-\sum_{i\in\Z/\ell\Z}\gamma_i\hat{\alpha}_i,\quad
\varphi_{\wi}^{\hat{\Lam}_0}(M)=\eps_{\wi}^{\hat{\Lam}_0}(M)+\<\hat{h}_i,\hat{\Lam}_0-\sum_{i\in\Z/\ell\Z}\gamma_i\hat{\alpha}_i\> .
$$
\end{dfn}

Note that the above $\eps_{\wi}^{\hat{\Lam}_0}(M)$ is well-defined by Theorem \ref{crystal1}.

Let $M\in\HH_n^\lam\lmod$ be a simple module. We define $$
\soc_{\widehat{B}}M:=\sum_{M\supseteq L\in\widehat{B}(\lam)}L, \quad \head_{\widehat{B}}M:=\sum_{\head(M)\supseteq L\in\widehat{B}(\lam)}L .
$$

\begin{cor}\label{maincor2} Let $i\in\Z/\ell\Z$ and $M\in\HH_n^\lam\lmod$ be a simple module.

1) if $n\geq 2$ then $\soc_{\widehat{B}}\res_{\HH_{n-2}^{\lam}}^{\HH_n^{\lam}}M\neq 0$ only if $n$ is even and $M\in\widehat{B}(\hat{\Lam}_0)$. In that case, each simple module in $\soc_{\widehat{B}}\res_{\HH_{n-2}^\lam}^{\HH_n^\lam}M$ occurs with multiplicity two.

2)  $\head_{\widehat{B}}\ind_{\HH_{n}^{\lam}}^{\HH_{n+2}^{\lam}}M\neq 0$ only if $n$ is even and $M\in\widehat{B}(\hat{\Lam}_0)$. In that case, each simple module in $\head_{\widehat{B}}\ind_{\HH_{n}^\lam}^{\HH_{n+2}^\lam}M$ occurs with multiplicity two.
\end{cor}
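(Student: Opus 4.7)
The plan is to deduce both parts of the corollary from Theorem~\ref{mainthm2a} by transferring the statement to the cyclotomic setting via the adjoint pair $(\pr^\lam,\infl^\lam)$ for part~1, and via Frobenius reciprocity between $\ind$ and $\res$ for part~2.

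For part~1, I would first observe that restriction commutes with inflation at the level of $\HH^\lam$-modules: since the generators of the ideal $J_\lam$ lie already in $\HH_{n-2}$, any simple submodule of $\res^{\HH_n^\lam}_{\HH_{n-2}^\lam} M$ is automatically annihilated by $J_\lam$, hence corresponds under $\infl^\lam$ to a simple submodule of $\res^{\HH_n}_{\HH_{n-2}} \infl^\lam M$. Combining the identity $\wf_i^\lam=\pr^\lam\circ \wf_i\circ \infl^\lam$ with Definition~\ref{Lam0} shows that $\infl^\lam$ sends $\widehat{B}(\hat{\Lam}_0)$ into $\widehat{B}(\infty)$, with $\pr^\lam$ serving as the inverse. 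Consequently $\soc_{\widehat{B}}\res^{\HH_n^\lam}_{\HH_{n-2}^\lam} M\neq 0$ precisely when $\soc_{\widehat{B}(\infty)}\res^{\HH_n}_{\HH_{n-2}}\infl^\lam M\neq 0$, with matching multiplicities. Applying Theorem~\ref{mainthm2a} to $\infl^\lam M$ then forces $n$ to be even and $\infl^\lam M\in\widehat{B}(n/2)$, which translates back to $M\in\widehat{B}(\hat{\Lam}_0)$, and the multiplicity-two statement transfers.

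For part~2, I would exploit that the Iwahori--Hecke algebra $\HH_n^\lam$ of type $B_n$ is a symmetric Frobenius algebra, so that induction and restriction between $\HH_n^\lam$ and $\HH_{n+2}^\lam$ are two-sided adjoint. Frobenius reciprocity then gives, for simple modules $M$ and $L$,
\begin{equation*}
\bigl[\head\,\ind_{\HH_n^\lam}^{\HH_{n+2}^\lam}M:L\bigr]=\dim\Hom(\ind M,L)=\dim\Hom(M,\res L)=\bigl[\soc\,\res^{\HH_{n+2}^\lam}_{\HH_n^\lam}L:M\bigr].
\end{equation*}
Thus $L\in\widehat{B}(\hat{\Lam}_0)$ occurs in $\head_{\widehat{B}}\ind M$ with multiplicity $k$ iff $M$ occurs in $\soc\,\res L$ with multiplicity $k$. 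Applying part~1 to such an $L$ (which in particular forces $n+2$ to be even) identifies the $\widehat{B}$-constituents of $\soc\,\res L$ as the modules $\we_{\wi}^{\hat{\Lam}_0}L$ for $i\in\Z/\ell\Z$, each appearing with multiplicity two. Hence any $M$ with $\head_{\widehat{B}}\ind M\neq 0$ is one of these $\we_{\wi}^{\hat{\Lam}_0}L$, whence $M\in\widehat{B}(\hat{\Lam}_0)$ by the closedness of $\widehat{B}(\hat{\Lam}_0)$ under the hat-Kashiwara operators (Theorems~\ref{crystal1} and~\ref{mainthm2b}), and $L$ occurs in $\head_{\widehat{B}}\ind M$ with multiplicity exactly two.

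The main obstacle will be the careful verification of the biadjointness in the cyclotomic type~$B$ setting and the precise matching of multiplicities between $\head\,\ind M$ and $\soc\,\res L$ under Frobenius reciprocity; both rely on standard facts, namely the symmetric Frobenius structure of finite-type Iwahori--Hecke algebras together with the explicit description of $\soc_{\widehat{B}}\res L$ provided by part~1 applied to $L\in\widehat{B}(\hat{\Lam}_0)$.
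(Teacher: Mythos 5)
Your part~1 is fine and, modulo packaging, is essentially the route the paper takes: the paper condenses the transfer to the cyclotomic setting once and for all in Theorem~\ref{mainthm2b}, whose proof is exactly ``apply Theorem~\ref{mainthm2a} and then inflate/project,'' and then invokes the non-degenerate version of \cite[Lemma 8.2.2]{Klesh:book} for the branching decomposition $\res_{\HH_{n-2}^\lam}^{\HH_n^\lam}\cong\bigoplus_{j,k}e_j^\lam e_k^\lam$. So no objection there.

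Part~2 has a genuine logical gap in the final inference. You correctly establish via biadjointness that $[\head\ind M:L]=[\soc\res L:M]$, and then, applying part~1 to $L$, that the $\widehat{B}$-constituents of $\soc\res L$ are the $\we_{\wi}^{\hat{\Lam}_0}L$, each with multiplicity two. But from ``$M$ occurs in $\soc\res L$'' you then jump to ``hence $M$ is one of these $\we_{\wi}^{\hat{\Lam}_0}L$.'' That step presupposes that $M$ itself lies in $\widehat{B}(\hat{\Lam}_0)$, which is precisely the conclusion you are trying to reach: the constituents of $\soc\res L$ that come from the summands $e_j^\lam e_k^\lam L$ with $\{j,k\}\neq\{i,i+\ell\}$ live in non-balanced blocks, and there is nothing in part~1 ruling out that such an $M$ has $L$ in $\head\ind M$ via the corresponding $f_j^\lam f_k^\lam M$. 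So the argument as written is circular. To close it you need an independent reason that a $\widehat{B}(\hat{\Lam}_0)$-module cannot occur in $\head(f_j^\lam f_k^\lam M)$ when $\{j,k\}\neq\{i,i+\ell\}$, or, in the balanced case, to invoke Theorem~\ref{mainthm2b}(2) directly: there $\head f_{\wi}^{\hat{\Lam}_0}M\cong\wf_{\wi}^{\hat{\Lam}_0}M$ is simple for any irreducible $M$, so if this head equals $L\in\widehat{B}(\hat{\Lam}_0)$ then $M\cong\we_{\wi}^{\hat{\Lam}_0}L\in\widehat{B}(\hat{\Lam}_0)$ by closedness (Theorems~\ref{crystal1}, \ref{mainthm3}). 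The paper's one-line proof reaches for exactly Theorem~\ref{mainthm2b} and the branching lemma, rather than re-deriving the adjunction; that version avoids routing through part~1 and hence avoids the circularity.
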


\begin{proof} This follows from Theorem \ref{mainthm2b} and the non-degenerate version of \cite[Lemma 8.2.2]{Klesh:book}.
\end{proof}

\begin{thm}\label{mainthm3}\text{(\cite{G})} The set $\widehat{B}(\hat{\Lam}_0)$, the functions $\eps_{\wi}^{\hat{\Lam}_0},\varphi_{\wi}^{\hat{\Lam}_0},\wt^{\hat{\Lam}_0}$, together with the operators $\we^{\hat{\Lam}_0}_{i},\wf^{\hat{\Lam}_0}_{i}$ form a crystal in the sense of Kashiwara \cite[\S7.2]{Ka1}. Moreover, it is isomorphic to Kashiwara's crystal associated to the crystal base of the integral highest weight module $V(\hat{\Lam}_0)$ over $U_{v}(\mathfrak{\widehat{sl}}_{\ell})$.
\end{thm}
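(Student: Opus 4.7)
The plan is to verify the Kashiwara crystal axioms for $(\widehat{B}(\hat{\Lam}_0), \we_{\wi}^{\hat{\Lam}_0}, \wf_{\wi}^{\hat{\Lam}_0}, \eps_{\wi}^{\hat{\Lam}_0}, \varphi_{\wi}^{\hat{\Lam}_0}, \wt^{\hat{\Lam}_0})$ and then identify the resulting abstract crystal with that of $V(\hat{\Lam}_0)$ via the embedding $\iota$ of (\ref{iota}), following the same template that Grojnowski used to establish Lemma \ref{lamCrystal} at the $\widehat{\mathfrak{sl}}_{2\ell}$ level.

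First I would verify the crystal axioms. Closure of $\widehat{B}(\hat{\Lam}_0)$ under both hat operators follows from Theorem \ref{mainthm2b}: each of $\we_{\wi}^{\hat{\Lam}_0}M$ and $\wf_{\wi}^{\hat{\Lam}_0}M$ is either zero or an irreducible socle/head. The mutual inverse relation $\wf_{\wi}^{\hat{\Lam}_0}M \cong N \Leftrightarrow \we_{\wi}^{\hat{\Lam}_0}N \cong M$ comes from iterating the analogous property in $B(\lam)$ provided by Lemma \ref{lamCrystal}, together with the commutativity $\we_i^\lam\we_{i+\ell}^\lam \cong \we_{i+\ell}^\lam\we_i^\lam$ (inherited from Corollary \ref{KashiwaraOper} via the projection $\pr^\lam$). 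The weight shift $\wt^{\hat{\Lam}_0}(\we_{\wi}^{\hat{\Lam}_0}M) = \wt^{\hat{\Lam}_0}(M) + \hat{\alpha}_i$ is immediate from Definition \ref{Lam0} and the fact that $\we_i^\lam\we_{i+\ell}^\lam$ shifts the $\widehat{\mathfrak{sl}}_{2\ell}$-weight by $\alpha_i + \alpha_{i+\ell}$, which corresponds to $\hat{\alpha}_i$ under the natural identification. The well-definedness of $\eps_{\wi}^{\hat{\Lam}_0}$ as the common value of $\eps_i^\lam$ and $\eps_{i+\ell}^\lam$ is guaranteed by Theorem \ref{crystal1}, and the identity $\varphi_{\wi}^{\hat{\Lam}_0} = \eps_{\wi}^{\hat{\Lam}_0} + \<\hat{h}_i, \wt^{\hat{\Lam}_0}\>$ is built into Definition \ref{Lam0}.

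For the identification with the crystal of $V(\hat{\Lam}_0)$, I would use the embedding $\iota$ to set up a map sending $\wf_{i_n+\ell\Z}\cdots\wf_{i_1+\ell\Z}\mathbf{1}_{\hat{\Lam}_0} \in B(\hat{\Lam}_0)$ to $\wf_{\wi_n}^{\hat{\Lam}_0}\cdots\wf_{\wi_1}^{\hat{\Lam}_0}\mathbf{1}_\lam \in \widehat{B}(\hat{\Lam}_0)$. This map is surjective by Definition \ref{Lam0} and, by construction, intertwines the two sets of Kashiwara operators. To prove that it is also well-defined and injective, one can project the isomorphism from Theorem \ref{mainthm1b} down to the cyclotomic level using that $\pr^\lam$ sends $\widehat{B}(\infty)$ surjectively onto $\widehat{B}(\hat{\Lam}_0) \sqcup \{0\}$ and commutes with the hat operators up to zero terms, exactly in the manner Kleshchev carries out for the non-hat case.

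The main obstacle will be this final well-definedness step: two sequences $(i_1,\ldots,i_n)$ and $(j_1,\ldots,j_n)$ representing the same element of $B(\hat{\Lam}_0)$ must yield isomorphic modules in $\widehat{B}(\hat{\Lam}_0)$. This is the hat analogue of the central result of \cite{G} for the non-hat case, and I expect it to follow by a careful bookkeeping argument leveraging Theorem \ref{mainthm1b} together with the standard theory (see e.g.\ \cite[Proposition 3.2.3]{KS}) which identifies the suitable quotient of $B(\infty)$ with the crystal of an integrable highest weight module. Once this well-definedness is secured, the transport of structure from $B(\hat{\Lam}_0)$ (known by Kashiwara) completes the identification.
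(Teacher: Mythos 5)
Your proposal lands on essentially the same route the paper takes: establish the hat crystal axioms on $\widehat{B}(\hat{\Lam}_0)$, then transfer the identification from $\widehat{B}(\infty)$ (Theorem \ref{mainthm1b}) down through $\pr^\lam$ following the Kleshchev/Kashiwara template for extracting $B(\Lambda)$ from $B(\infty)$. Your last paragraph, where you invoke \cite[Propositions 8.1, 8.2, Theorem 8.2]{Ka1} (implicitly, via the KS citation) and the analogy with the non-hat case, is precisely what the paper's proof does in appealing to \cite[Lemma 10.2.1, Theorem 10.3.4]{Klesh:book}.

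Two remarks, however. First, the paper's proof begins by noting the need to check $\varphi_{\wi}^{\hat{\Lam}_0}(M)=\varphi_i^\lam(M)=\varphi_{i+\ell}^\lam(M)$ for $M\in\widehat{B}(\hat{\Lam}_0)$; you instead state that the identity $\varphi_{\wi}^{\hat{\Lam}_0}=\eps_{\wi}^{\hat{\Lam}_0}+\<\hat{h}_i,\wt^{\hat{\Lam}_0}\>$ is ``built into the definition,'' which is true of that particular identity but misses the real content. The point is that the representation-theoretic $\varphi_i^\lam(M)=\max\{m\mid(\wf_i^\lam)^mM\neq 0\}$ agrees with the crystal formula over $\widehat{\mathfrak{sl}}_\ell$; this amounts to verifying that the Cartan pairings $\<\hat{h}_i,\hat{\alpha}_j\>$, $\<\hat{h}_i,\hat{\Lam}_0\>$ of $\widehat{\mathfrak{sl}}_\ell$ match $\<h_{\theta(i)},\alpha_{\theta(j)}+\alpha_{\theta(j)+\ell}\>$, $\<h_{\theta(i)},\Lam_0+\Lam_\ell\>$ of $\widehat{\mathfrak{sl}}_{2\ell}$ on the image (uses $\ell>2$). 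You should make that check explicit rather than calling it definitional. Second, the invocation of the crystal embedding $\iota$ from (\ref{iota}) is a bit of a red herring: $\iota$ maps $B(\hat{\Lam}_0)$ into the $\widehat{\mathfrak{sl}}_{2\ell}$-crystal $B(\Lam_0+\Lam_\ell)$, whereas the theorem requires a bijection between the abstract $\widehat{\mathfrak{sl}}_\ell$-crystal $B(\hat{\Lam}_0)$ and the set of modules $\widehat{B}(\hat{\Lam}_0)$. The actual engine is, as you eventually say, the projection $\pr^\lam$ of the $\widehat{B}(\infty)\cong B_{\widehat{\mathfrak{sl}}_\ell}(\infty)$ isomorphism and Kashiwara's result recovering $B(\Lambda)$ as a subcrystal of $B(\infty)\otimes T_\Lambda$; $\iota$ itself plays no role in the proof.
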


\begin{proof} Note that by definition it is easy to check that $\varphi_{\wi}^{\hat{\Lam}_0}(M)=\varphi_i^\lam(M)=\varphi_{i+\ell}^\lam(M)$ for any $M\in\widehat{B}(\hat{\Lam}_0)$. Now the theorem follows from a similar argument as that was used in the proof of \cite[Lemma 10.2.1, Theorem 10.3.4]{Klesh:book} by using Theorem \ref{crystal1}, Theorem \ref{mainthm1b} and \cite[Propositions 8.1, 8.2, Theorem 8.2]{Ka1}.\end{proof}

Finally, we remark that although we chose a level two cyclotomic quotient for $\lam=\Lam_0+\Lam_\ell$ and consider the Hecke algebras of types $D_n$ and $B_n$ in this paper, the construction and the main results of this paper should be able to be generalized to the cases of the cyclotomic Hecke algebras of type $G(pd,p,n)$ and some cyclotomic Hecke agebra of type $G(pd,1,n)$, and the dominant weight $\lam$ should be replaced with some ${\rm h}$-symmetric dominant weights in $P^+$ in a suitable sense, where ${\rm h}$ is the map defined in \cite[Theorem 4.2]{Hu3}.

\bigskip

\end{document}